\newtheorem{theorem}{Theorem}
\newtheorem{claim}{Claim}
\newtheorem{problem}{Problem}
\newtheorem{problems}[problem]{Problem}
\newtheorem{remark}{Remark}
\newtheorem{lemma}[theorem]{Lemma}
\newtheorem{proposition}[theorem]{Proposition}
\newtheorem{corollary}[theorem]{Corollary}
\newtheorem{example}{Example}
\newtheorem{definition}{Definition}
\def\rem{{\bf Remark. }}
\newcommand{\QQ}{{\mathbb Q}}
\newcommand{\NN}{{\mathbb N}}
\newcommand{\ZZ}{{\mathbb Z}}
\newcommand{\RR}{{\mathbb R}}
\newcommand{\FF}{{\mathbb F}}
\newcommand{\Int}{{Int}}
\title{Interval orders, semiorders and ordered groups}
\author [M.Pouzet]{Maurice Pouzet}
\address{Univ. Lyon, Universit\'e Claude-Bernard Lyon1, CNRS UMR 5208, Institut Camille Jordan,
43, Bd. du 11 Novembre 1918,
69622 Villeurbanne, France et Department of Mathematics and Statistics, The University of Calgary, Calgary, Alberta, Canada} \email{pouzet@univ-lyon1.fr }
\author[I.Zaguia]{Imed Zaguia*}\thanks{*Corresponding author}
\address{Department of Mathematics \& Computer Science, Royal Military College,
P.O.Box 17000, Station Forces, Kingston, Ontario, Canada K7K 7B4}
\email{zaguia@rmc.ca}
\date{\today}
\begin{document}
\keywords{(partially) ordered set; interval order; semiorder; (partially) ordered group; bi-ordering; cone}
\subjclass[2010]{06A6,06F15}

\begin{abstract} We prove that the order of an ordered group is an interval order if and only if it is a semiorder. Next, we prove that every semiorder is isomorphic to a collection $\mathcal J$ of intervals of some totally ordered abelian group, these intervals being of the form $[x, x+ \alpha[$ for some positive $\alpha$. We describe ordered groups such that the ordering is a semiorder and we introduce threshold groups generalizing totally ordered groups. We show that the free group on finitely many generators and the Thompson group $\mathbb F$ can be equipped with a compatible semiorder which is not a weak order. On another hand, a group introduced by Clifford cannot. \end{abstract}
\maketitle

\input{epsf}

\section{Introduction}

This paper is about the interactions between order and group operation in an ordered group. It focuses on semiorders and interval orders, two notions whose role in Mathematical Psychology is well documented through books and papers,  see  for example \cite{aleskerov-al, bgr, bridges, pirlotvincke} and \cite{bogart, brt,candeal1, candeal,doignon-al1,doignon-al2,fi,luce56,luce87,manders,rabinovitch,scottsuppes,wiener}. Interval orders are much more general than semiorders. But, surprisingly, a compatible order on a group is a semiorder whenever it is an interval order. This result, which is not difficult to prove,  lead us to investigate the structure of ordered groups whose order is a semiorder. Among those, we identify threshold groups (for which the quasi-orders $\leq_{pred}$ and $\leq_{succ}$, naturally associated to any order $\leq$, coincide and are total orders).  A basic example is the additive group $\RR$ of real numbers with the order defined by $x\leq_1 y$ if either $x=y$ or $y-x\geq 1$ for the linear order on the reals. These ordered groups generalize totally ordered groups. We obtain several structural results. We  prove for example that these ordered groups are rich enough to embed every semiorder, thus providing an extension of Scott-Suppes's representation of semiorders. It is natural to ask   which groups can be endowed with a nontrivial semiorder (that is distinct from the equality relation) or a threshold order. We prove that a group  can be equipped with a semiorder distinct from the equality relation (hence with a nontrivial semiorder) if and only if  it  admits  a totally orderable quotient by some proper subgroup. If $G$ is abelian this condition amounts to the fact that the torsion subgroup is distinct from $G$.  We have only partial answers  for the existence of threshold order. While it is true that
every abelian group can be endowed with a threshold order which is not total, the nonabelian group introduced by Clifford cannot. This group is not finitely generated. We do not know if the free group on finitely many generators can be endowed with a threshold order which is not total; still we are able to prove that it can be equipped with a compatible semiorder which is not a weak order.

Our results are detailed in Section \ref{section:presentation}. Before, we  start with a section of prerequisites; some readers may want to skip this section except for the last subsection in which we introduce the  notion of threshold group which is the main object of study of this paper.  We include the proofs when they contribute to a better understanding. Otherwise, they are distributed in the six last   sections.

The  results described in this paper have been presented at the Combinatorial days of Sfax, May 28, 2013.
The first author thanks the organizers, Imed and Youssef Boudabbous, for their hospitality.
The authors are pleased to thank the referees  for their very detailed work, and   insight into the material.

\section{Prerequisite}
We recall the basic  notions of the theory of  ordered sets	that we will use, particularly the notions of interval orders and semiorders with their main properties. We do not restrict our attention to ordered sets, our investigation requires several properties of quasi-ordered sets.   Next, we recall the notions of  groups and ordered groups. We conclude with the notion of threshold group.
\subsection{Quasi-orders, orders, chains and antichains} We recall that a binary relation $\rho$ on a set $X$ is a \emph{quasi-order} if it is reflexive and transitive, in which case we say that the set $X$ is \emph{quasi-ordered} and the pair $P=(X, \rho)$ is a \emph{quasi-ordered set} (qoset). The dual of $P$ is the qoset $P^{d}=(X, \rho^{-1})$ where $\rho^{-1}$ is the opposite quasi-order defined by $x\rho^{-1}y$ if $y\rho x$.  As usual,  we denote a  quasi-order by the symbol $\leq$. Two elements  $x$ and $y$ of $X$ are \emph{comparable} with respect to $P$ if $x\leq y$ or $y\leq x$ and we set $x\sim y$ (despite the fact that this relation is not an equivalence); otherwise we say that $x$ and $y$ are \emph{incomparable} and we set $x\nsim y$. The   \emph{incomparability graph of $P$} is the graph $Inc(P)= (X, \nsim )$  whose edges are the pairs $(x,y)$ such that $x\nsim y$. The relation $\leq$ is \emph{total} if for every two elements $a, b\in X$ either $a\leq b$ or $b\leq a$ holds. If $\leq$  is an antisymmetric quasi-order, we call it  a \emph{partial order} (or an \emph{order}),  the set $X$ is \emph{partially ordered} and the pair $P=(X, \leq)$ is a  \emph{partially ordered set}  (poset for short). A total order is also called a  \emph{linear order} and the pair $P= (X, \leq)$ is a \emph{chain}. A  \emph{$n$-element chain} is a chain on a set of cardinality $n$.  A set of pairwise incomparable elements of a poset is called an \emph{antichain}. A \emph{$n$-element antichain} is an antichain of cardinality $n$.  A poset $P$ is \emph{bipartite} if it is the union of (at most) two antichains. Equivalently, every chain in $P$ has at most two elements. An element $x$ in a poset $P=(X, \leq)$  is \emph{isolated} if it is incomparable to every element of $X\setminus\{x\}$.

\subsection{Lexicographical sum} Let $I$ be a poset such that $|I|\geq 2$ and let $\{P_{i}=(X_i,\leq_i)\}_{i\in I}$ be a family of pairwise disjoint nonempty qosets that are all disjoint from $I$. The \emph{lexicographical sum} $\displaystyle \sum_{i\in I} P_{i}$ is the qoset defined on $\displaystyle \bigcup_{i\in I} X_{i}$ by $x\leq y$ if and only if
\begin{enumerate}[(a)]
\item There exists $i\in I$ such that $x,y\in X_{i}$ and $x\leq_i y$ in $P_{i}$; or
\item There are distinct elements $i,j\in I$ such that $i<j$ in $I$,   $x\in X_{i}$ and $y\in X_{j}$.
\end{enumerate}

The qosets $P_{i}$ are called the \emph{components} of the lexicographical sum and the poset $I$ is the \emph{index set}.

If $I$ is a totally ordered set, then $\displaystyle \sum_{i\in I}
P_{i}$ is called a \emph{linear sum}. On the other hand, if $I$ is an antichain, then $\displaystyle \sum_{i\in I} P_{i}$ is called a \emph{direct sum}. Henceforth we will use the symbol $\oplus$ to indicate direct sum.

Let $p$ and $q$ be two nonnegative integers. Denote by $p \oplus q$ the poset  direct sum of two chains on $p$ and $q$ elements respectively. The Hasse diagrams of the direct sums $2 \oplus 2$ and $3 \oplus 1 $ are depicted in Figure \ref{exception} (a) and (b) respectively.

\subsection {Posets versus qosets, initial segments, convex subsets, autonomous subsets} Let $P=(X, \leq)$  be a qoset. The relation $\equiv$ defined by $x\equiv y$ if $x \leq y$ and $y\leq x$ is an equivalence relation, whereas the relation $<$ defined by $x<y$ if $x\leq y$ and $y\not \leq x$ is a \emph{strict-order}, that is a  transitive and irreflexive relation. For an element $x\in X$ we denote by $\overline{x}$ the equivalence class of $x$ with respect to $\equiv$ and $X/\equiv$ the set of equivalence classes. Then the set $X/\equiv$ can be induced with a partial order $\overline{\leq}$ so that $\overline{x}\, \overline{\leq}\, \overline{y}$ if $x\leq y$. Let $P/\equiv$ be the ordered set $(X/\equiv, \overline{\leq})$. The quotient map $p: X\rightarrow X/\equiv$ defined by setting $p(x):= \overline{x}$ is order preserving, that is if $x\leq y$, then $p(x)\, \overline{\leq}\, p(y)$.

Let $P= (X, \leq)$ be a qoset. A subset $Y$ of $X$ is a \emph{final segment} of $P$ if $x\in Y$ and $x\leq y$ implies $y\in Y$. This is an \emph{initial segment } of $P$ if it is a final segment for the opposite qoset $P^{d}$. The subset $Y$ is  a \emph{convex} subset of  $P$  if for all $y,y'\in Y$ the set $\{z\in X : y\leq z\leq y'\}$ is a subset of $Y$. The set $Y$ is \emph{autonomous} in $P$ if for every $y,y'\in Y$ and $x\in X\setminus Y$, $x\leq y$ is equivalent to $x\leq y'$ and  $y\leq x$ is equivalent to $y'\leq x$. The empty set, the singletons in $X$ and the whole set $X$ are autonomous and are said to be {\it trivial}. The qoset $P$ is \emph{prime} if its only autonomous subsets are trivial. The notion of autonomous set was introduced in \cite{fraisse} and \cite{gallai}; since then a huge literature on this notion proliferated, see  \cite{ehrenfeucht-al}.

If $\leq$ is an order, autonomous subsets are convex. If $\leq$ is  total, convex subsets are autonomous. Hence, if $\leq$ is a total order, convex and autonomous sets coincide, and are usually called \emph{intervals}. If $P$ is the lexicographical sum of qosets, the components are autonomous.
Each element of $X/\equiv$ is an autonomous subset in $P$, hence $P$ is the lexicographical sum of the classes by the quotient $P/\equiv$. Each autonomous subset in $P/\equiv$ is convex and its inverse image by the quotient map $p$ is a convex autonomous subset in $P$. A subset $Y$ of $P$ is convex if and only if $Y$ is the inverse image under $p$ of some convex subset of $P/\equiv$. We recall that for every $x\in X$ there is a	 largest autonomous subset containing $x$ which is an  antichain, resp. a complete relation. This elementary fact is frequently used (see \cite{Pouzet-Thiery2}). It  is a consequence of the fact that an union of autonomous subsets containing an element  is autonomous (see \cite {ehrenfeucht-al} p.48).   Note that  necessarily,  one of these largest sets is a singleton.

\subsection{Embeddability, weak-orders, semi-orders, interval orders}\label{subsection2}
Given two posets $P=(X,\leq)$ and $P'=(X',\leq')$,  we say that $P$ \emph{embeds} into $P'$ if there exists a one-to-one map from $X$ to $X'$ such that $f(x)\leq' f(y)$ if and only if $x\leq y$ for all $x,y\in X$. Posets which do not embed the direct sum $1\oplus 1$ are just chains.
 Posets which do not embed $1\oplus 2$ are called \emph{weak orders}. As it is immediate to see and well known, a poset $P:= (X, \leq)$ does not embed $1\oplus 2$ if and only if the binary relation defined on $X$ by "$x$ is incomparable to $y$ or $x=y$" is an equivalence relation. For instance, a chain and an antichain (the order is the equality relation) are weak orders. More generally, every weak order is a lexicographical sum of antichains indexed by a chain.
 We now introduce  the notions on which this paper is built.
\begin{definition}
\begin{enumerate}[(a)]
\item The order of a poset is an \emph{interval order} if it does not embed $2 \oplus 2$.
\item The order of a poset is a \emph{semiorder} if it does not embed $2 \oplus 2$ nor $3 \oplus 1 $.
\end{enumerate}
\end{definition}

\begin{figure}[ht]
\begin{center}
 \leavevmode \epsfxsize=1.6in \epsfbox{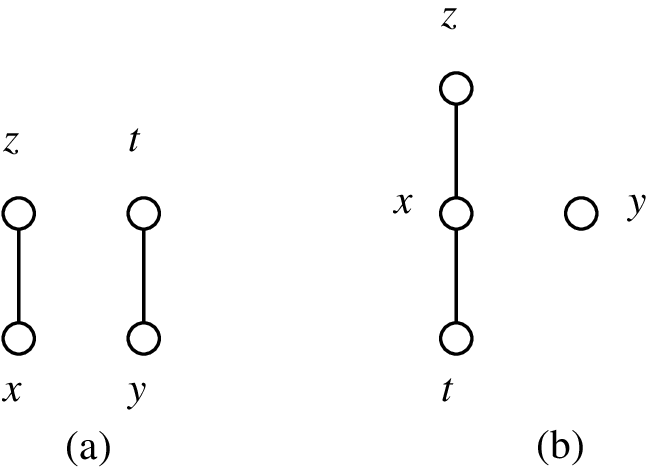}
\end{center}
\caption{}
\label{exception}
\end{figure}

Interval orders were considered in \cite{fi} and \cite{wiener} in relation to the theory of measurement. Semiorders were introduced and applied in mathematical psychology by Luce \cite{luce56}. For a wealth of information about interval orders and semiorders the reader is referred to \cite{aleskerov-al, bgr, bridges, pirlotvincke, fi-book}.
The name interval order comes from the fact that if the order of a poset $P$ is an interval order, then $P$ is isomorphic to a subset $\mathcal J$ of the set $\Int(C)$ of nonempty intervals of a totally ordered set $C$, ordered as follows: if $I, J\in\Int(C)$, then
\begin{equation}\label{equ:interval}
I<J \mbox{ if } x<y \mbox{ for every } x\in I \mbox{ and every } y\in J.
\end{equation}

This fact is due to Fishburn (see Theorem 4 of \cite{fi}) in the case where the equivalence relation associated to the absence of strict preference is countable and to Bogart \cite{bogart} in the general case. See also Wiener \cite{wiener}.

The Scott and Suppes Theorem \cite{scottsuppes} states that the order of a \emph{finite} poset $P$ is a semiorder if and only if $P$ is isomorphic to a collection of intervals of length 1 of the real line, ordered as in Statement (\ref{equ:interval}). Extensions of Scott and Suppes Theorem to infinite posets have been considered, notably the representation in the real numbers, see \cite{candeal, candeal1}, \cite {luce87}, and \cite{manders}.
Another extension in presented in Theorem \ref{thm:1} below.

\subsection{The quasi-orders $\leq_{pred}$ and $\leq_{succ}$ \label{succ and pred}} 
Let $P= (X, \leq)$ be a qoset. Set $x\leq_{pred}y$ if $z<x$ implies $z<y$ for all $z\in X$ and set $x\leq_{succ}y$ if $y<z$ implies $x<z$ for all $z\in X$. The relations $\leq_{pred}$ and $\leq_{succ}$ are called the \emph{traces} of $<$ or $\leq$.

 The relations $\leq_{pred}$ and $\leq_{succ}$ associated to the quasi-order $\leq$ are quasi-orders (even in the case where $\leq $ is an order relation) containing $\leq$. Hence  $\leq$ is an order  whenever $\leq_{pred}$ or $\leq_{succ}$ is an order. Interval orders and semiorders have been characterized in terms of the quasi-orders $\leq_{pred}$ and $\leq_{succ}$ associated with a given order $\leq$. Indeed, as indicated in Lemma \ref{intervalorder}, an order $\leq$ is an interval order iff $\leq_{pred}$ is a total quasi-order, equivalently $\leq_{succ}$ is a total quasi-order. Furthermore, $\leq$ is a semiorder iff the intersection of $\leq_{pred}$ and $\leq_{succ}$ is a total quasi-order (Lemma \ref{critical}). To the quasi-order $\leq_{pred}$ corresponds the equivalence relation $\equiv_{pred}$ defined by $x\equiv_{pred} y$ if $x\leq_{pred}y$ and $y\leq_{pred} x$. Similarly, to the quasi-order $\leq_{succ}$ corresponds the equivalence $\equiv_{succ}$.

The strict orders associated to $\leq_{pred}$ and $\leq_{succ}$ extend the strict order associated to $\leq$, that is:
\begin{equation}\label{equa:1}
x< y\Rightarrow x<_{pred}y \; \text{and} \; x<_{succ}y
\end{equation}
for all $x,y \in X$.
Indeed, let $z< x$ then, since $<$ is a strict order, $z<y$ hence $x\leq_{pred} y$. Necessarily, $y\not \leq_{pred} x$. Otherwise, from $x<y$ and $y\leq_{pred}x$ we would have $x<x$ which is impossible.

We recall the following result (see Theorems 2 and 7 of \cite{rabinovitch}).
\begin{lemma} \label{intervalorder}An order $\leq$ on a set $X$ is an interval order if and only if
$\leq_{pred}$ is a total quasi-order; equivalently $\leq_{succ}$ is a total quasi-order.
\end{lemma}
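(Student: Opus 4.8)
The plan is to prove the equivalence between ``$\leq$ is an interval order'' and ``$\leq_{pred}$ is total,'' and then observe that the $\leq_{succ}$ statement follows by dualizing (passing to $P^d$ replaces $\leq_{pred}$ with $\leq_{succ}$, and $2\oplus 2$ is self-dual, so the property of not embedding $2\oplus 2$ is preserved). For the forward direction, I would argue by contraposition: suppose $\leq_{pred}$ is not total, so there exist $x,y$ with $x\not\leq_{pred}y$ and $y\not\leq_{pred}x$. Unwinding the definition, $x\not\leq_{pred}y$ gives an element $a$ with $a<x$ but $a\not<y$, and $y\not\leq_{pred}x$ gives an element $b$ with $b<y$ but $b\not<x$. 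I would then check that the four elements $a,b,x,y$ induce a copy of $2\oplus 2$: the relations $a<x$ and $b<y$ hold; $a\not<y$ and $b\not<x$ hold by construction; and one must still verify $a\nsim b$, $x\nsim y$, $a\not\geq y$, $b\not\geq x$, etc. Several of these need a short argument — e.g. if $x<b$ then from $b<y$ we'd get $x<y$, contradicting $a<x \Rightarrow a<y$; the remaining incomparabilities are dispatched similarly using transitivity of $<$.

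For the converse, again by contraposition: suppose $\leq$ embeds $2\oplus 2$, say on elements $a<x$ and $b<y$ with all cross-pairs incomparable. I claim $x\not\leq_{pred}y$ (witnessed by $a$, since $a<x$ but $a\not<y$ as $a\nsim y$) and $y\not\leq_{pred}x$ (witnessed by $b$, since $b<y$ but $b\not<x$). Hence $\leq_{pred}$ is not total. This direction is essentially immediate once the definitions are in hand.

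The main obstacle — really the only place requiring care — is the bookkeeping in the forward direction: from the bare witnesses $a,b$ one must confirm that $\{a,b,x,y\}$ genuinely forms a $2\oplus 2$ and not some degenerate configuration (for instance $a$ could a priori be comparable to $b$, or equal to $y$, or lie above $x$ in a way that collapses the picture). Each potential collapse is ruled out by a one-line transitivity argument, but they must all be listed. Since the lemma is quoted from \cite{rabinovitch} (Theorems 2 and 7 there), I would keep this verification brief and refer the reader to that source for the full details, simply recording the witnesses $a$ and $b$ that make the argument transparent.
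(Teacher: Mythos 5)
Your proposal is correct. The paper does not prove this lemma at all --- it simply quotes it from Rabinovitch --- so there is no internal argument to compare against, but your witness construction is the standard proof and it is complete: in the forward direction the witnesses $a<x$, $a\not<y$ and $b<y$, $b\not<x$ do force $\{a,b,x,y\}$ to be a genuine $2\oplus 2$, since every potential comparability ($a\leq b$, $b\leq a$, $x\leq y$, $y\leq x$, $y\leq a$, $x\leq b$, or a coincidence of elements) composes with one of the known strict inequalities to contradict $a\not<y$ or $b\not<x$; the converse and the dualization to $\leq_{succ}$ are exactly as you describe.
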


We give a characterization of semiorders which is slightly different from the one given by Rabinovitch (see Theorem 7 \cite{rabinovitch}).

Let $P:= (X, \leq)$ be a poset. For an element $u\in X$ set $D(u):=\{v\in X : v< u\}$ and $U(u):=\{v\in X : u< v\}$. We recall that an incomparable pair $(x,y)$ of elements is {\it critical} if $D(x)\subseteq~D(y)$ and $U(y)\subseteq U(x)$ (see \cite{trotterbook}). Denote by $crit(P)$ the set of all critical pairs of $P$. The importance of this notion in the case of interval orders is pointed in  \cite{estevan-al}.

\begin{lemma} \label{critical} An order $\leq$ on a set $X$ is a semiorder if and only if the intersection of the quasi-orders $\leq_{pred}$ and  $\leq_{succ}$  is a total quasi-order. Equivalently, for every incomparable elements $a$ and $b$, the pair $(a,b)$ or the pair $(b,a)$ is critical. \end{lemma}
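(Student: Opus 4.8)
The plan is to prove the two equivalences in turn, using Lemma~\ref{intervalorder} as a black box for the interval-order part. First I would observe that $\leq$ is a semiorder iff $\leq$ is an interval order and $\leq$ has no $3\oplus 1$; by Lemma~\ref{intervalorder} the interval-order condition is equivalent to $\leq_{pred}$ (equivalently $\leq_{succ}$) being total. So the task reduces to: assuming $\leq_{pred}$ and $\leq_{succ}$ are both total, show that the absence of $3\oplus 1$ is equivalent to $\leq_{pred}\cap\leq_{succ}$ being total, and that this in turn is equivalent to the critical-pair statement. The plan is to handle the last equivalence essentially by unwinding definitions: for incomparable $a,b$, the pair $(a,b)$ is critical means $D(a)\subseteq D(b)$ and $U(b)\subseteq U(a)$, i.e.\ ($z<a\Rightarrow z<b$) and ($b<z\Rightarrow a<z$), which is exactly $a\leq_{pred} b$ and $a\leq_{succ} b$, i.e.\ $a(\leq_{pred}\cap\leq_{succ})b$. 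Since $a,b$ are incomparable we cannot have both $a(\leq_{pred}\cap\leq_{succ})b$ and $b(\leq_{pred}\cap\leq_{succ})a$ (that would force $D(a)=D(b)$, $U(a)=U(b)$, which is fine, but combined with... actually one must check this cannot create a contradiction — see below), so totality of $\leq_{pred}\cap\leq_{succ}$ on each incomparable pair says precisely that $(a,b)$ or $(b,a)$ is critical. Totality on comparable pairs is automatic from \eqref{equa:1}.

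The substantive direction is: if $\leq_{pred}\cap\leq_{succ}$ is total then $\leq$ is a semiorder. For the no-$2\oplus 2$ part, I would argue directly: a $2\oplus 2$ is $a<b$, $c<d$ with all cross-pairs incomparable; then neither $(a,c)$ nor $(c,a)$ can be critical — e.g.\ $(a,c)$ critical would give $U(c)\subseteq U(a)$, but $d\in U(c)$ and $d\notin U(a)$ (since $a\nsim d$), contradiction; symmetrically $(c,a)$ fails via $b$ — so $\leq_{pred}\cap\leq_{succ}$ is not total, contradiction. For the no-$3\oplus 1$ part, a $3\oplus 1$ is a $3$-chain $a<b<c$ and an isolated (within the configuration) point $x$ incomparable to all of $a,b,c$; I would check that neither $(x,b)$ nor $(b,x)$ is critical: $(x,b)$ critical needs $D(x)\subseteq D(b)$ and $U(b)\subseteq U(x)$, but $a\in D(b)\setminus$... wait, one wants to use the chain of length $3$ around $b$: $a<b$ gives $a\in D(b)$ and $b<c$ gives $c\in U(b)$; if $(x,b)$ is critical then $U(b)\subseteq U(x)$ forces $c\in U(x)$, i.e.\ $x<c$, contradicting $x\nsim c$; if $(b,x)$ is critical then $D(b)\subseteq D(x)$ forces $a\in D(x)$, i.e.\ $a<x$, contradicting $x\nsim a$. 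So again totality fails. Conversely, for the forward direction, if $\leq$ is a semiorder and $a\nsim b$, I must produce a critical pair among $(a,b),(b,a)$; here I would use that $\leq$ is an interval order, so by Lemma~\ref{intervalorder} (and its $\leq_{succ}$ form) $\leq_{pred}$ and $\leq_{succ}$ are total, hence $a\leq_{pred}b$ or $b\leq_{pred}a$, and $a\leq_{succ}b$ or $b\leq_{succ}a$; the job is to rule out the "crossed" case $a\leq_{pred}b$ with $b\leq_{succ}a$ (and its mirror) using absence of $3\oplus 1$ — this will be where $3\oplus 1$-freeness is genuinely needed and is the main obstacle.

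Concretely, suppose $a<_{pred}b$ and $b<_{succ}a$ with $a\nsim b$ (strictness holds since they're incomparable). From $a<_{pred}b$, pick — I need a witness: actually $a<_{pred}b$ just means $D(a)\subseteq D(b)$; to get a proper containment or a useful element I should think about whether $D(a)=D(b)$ and $U(a)=U(b)$ is possible for incomparable $a,b$ in a semiorder, in which case $\{a,b\}$ would be an autonomous antichain and both $(a,b),(b,a)$ are critical, which is fine. So the real worry is the crossed case. If $b<_{succ}a$ then $U(a)\subseteq U(b)$ properly or there's $z$ with $a<z$, $b\not<z$... Let me instead fix a witness $t$ with $b\leq_{succ}a$ failing to be $a\leq_{succ}b$: there is $t$ with $a<t$ but $b\not< t$, so (as $a\nsim b$... hmm $b\not<t$ and we'd want $b\nsim t$ or $t<b$). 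I would split on whether $t<b$ or $t\nsim b$. If $t\nsim b$: then from $a<_{pred}b$ I similarly get $s$ with $s<b$, $s\not<a$; chasing these relations together with $a<t$ and $s<b$ should expose a $3\oplus1$ (e.g.\ on $\{s,a,t\}$... no, need a $3$-chain) — I expect the configuration $s<b$, $a<t$ plus incomparabilities, after possibly using transitivity to build $s<b$ into a longer chain, yields either $2\oplus 2$ or $3\oplus 1$, contradicting semiorderedness. The careful bookkeeping of which elements are comparable and assembling exactly a $3\oplus 1$ (or $2\oplus 2$) from the crossed traces is the technical heart; once that contradiction is in hand, the crossed case is impossible, so one of $a\leq_{pred}b\wedge a\leq_{succ}b$ or $b\leq_{pred}a\wedge b\leq_{succ}a$ holds, i.e.\ $(a,b)$ or $(b,a)$ is critical, completing the proof.
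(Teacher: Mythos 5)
Your backward direction (totality of $\leq_{pred}\cap\leq_{succ}$ implies no $2\oplus 2$ and no $3\oplus 1$) is complete and correct, and your identification of "$(a,b)$ is critical" with "$a\leq_{pred}b$ and $a\leq_{succ}b$" for incomparable $a,b$ is exactly the fact the paper's proof rests on. The gap is in the forward direction, precisely at the point you yourself flag as "the technical heart": you never actually rule out the crossed case, and the witness you start to extract is the wrong one. In the crossed case you are assuming $a\leq_{pred}b$, $b\not\leq_{pred}a$, $b\leq_{succ}a$, $a\not\leq_{succ}b$. A witness for the failure of $a\leq_{succ}b$ (i.e.\ of $U(b)\subseteq U(a)$) is an element $t$ with $b<t$ and $a\not<t$; instead you take "$t$ with $a<t$ but $b\not<t$", which would witness the failure of $b\leq_{succ}a$ --- a relation you are assuming \emph{holds}, so no such $t$ need exist. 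From there the case analysis on $t<b$ versus $t\nsim b$ goes nowhere, and the proof ends with "I expect the configuration \dots yields either $2\oplus 2$ or $3\oplus 1$" rather than an argument.

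The repair is short. Take $s$ with $s<b$, $s\not<a$ (witnessing $b\not\leq_{pred}a$) and $t$ with $b<t$, $a\not<t$ (witnessing $a\not\leq_{succ}b$). Then $s<b<t$ is a $3$-element chain, and $a$ is incomparable to all of $s,b,t$: indeed $a<s$ would give $a<b$, and $t<a$ would give $b<a$, both contradicting $a\nsim b$, while $s\not<a$ and $a\not<t$ hold by choice of the witnesses. So $\{s,b,t,a\}$ is a copy of $3\oplus 1$, contradicting the semiorder hypothesis --- note that in your architecture (where Lemma \ref{intervalorder} has already disposed of $2\oplus 2$) only the $3\oplus 1$ obstruction is needed here. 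For comparison, the paper does not route through Lemma \ref{intervalorder} at all: it assumes both $(a,b)$ and $(b,a)$ fail to be critical, extracts one witness for each failure, and shows the four points form either a $2\oplus 2$ or a $3\oplus 1$ depending on which clause of criticality fails; that is slightly more self-contained, while your route trades one case for a dependence on Lemma \ref{intervalorder}. Either way, without the witness-chasing paragraph above your proof is not complete.
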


\begin{proof}

We  use the fact that for every
order $\leq$ (semiorder or not), the intersection of the quasi-orders $\leq_{pred}$ and $\leq_{succ}$ is equal to the union of $\leq$ and $crit(P)$. From this fact  follows that this intersection is a total quasi-order iff for every two incomparable elements  $a$ and $b$ either $(a,b)$ or $(b,a)$ are critical.  Let us prove that this condition on pairs holds if $\leq$ is a semiorder. Suppose the contrary. Let $a,b$ be two incomparable elements and let $t\in X$  witnessing the fact that $(a,b)$ is not critical. We may assume without loss of generality that  $t<a$ and $t\nless b$. Since $(b,a)$ is not critical, we infer that there exists $t'$ such that $t'<b$ and $t'\nless a$ or $a<t'$ and $b\nless t'$. In the former case we have that $\{t,a,t',b\}$ is isomorphic to $2\oplus 2$ and in the latter case we have that $\{t,a,t',b\}$ is isomorphic to $3\oplus 1$. In both cases we obtain a contradiction since the order $\leq$ is a semiorder.
Now, suppose that for every incomparable elements $a$ and $b$, the pair $(a,b)$ or the pair $(b,a)$ is critical.  We claim that the order is a semiorder. Indeed, let $x,y,z,t$ be such that $x<y$ and $z<t$ and $x$ is incomparable to $z$. Then $(x,z)$ or $(z,x)$ must be  a critical pair and therefore $x<t$ or $z<y$. This shows that there is no $2\oplus 2$ in $(X,\leq)$. Similarly we prove that there is no $3\oplus 1$ in $(X,\leq)$. Hence, $X$ is a semiorder.  \end{proof}

As a consequence  of Lemmas \ref {intervalorder} and \ref{critical}, an order is a semiorder whenever the quasi-orders $\leq_{pred}$ and $\leq_{succ}$ are equal and total.
But in general, these quasi-orders are not equal as this can be seen on  the direct sum $2\oplus 1$.

This leads to the following definition.

\begin{definition} We say that an order $\leq$ is a \emph{threshold order} if $\leq_{pred}$ and $\leq_{succ}$ are total orders and equal.
\end{definition}
Since $\leq_{pred}$ and $\leq_{succ}$ coincide with $\leq$ when  $\leq$ is a total order, total orders are threshold orders.

\subsection{Ordered groups}Let $G=(X,+)$ be a group. We do not require the group to be abelian (the role of commutativity will be emphasized when needed). Still, we denote the group operation by $+$, the neutral element by $0$ and by $-x$ the inverse of any element $x$. A \emph{right translation} of $G$ is map $t$ from $X$ to $X$ such that there exists an element $g$ of $X$ so that $t(x)=x+g$ for all $x\in X$. Similarly we define \emph{left translations}. A \emph{translation} is either a right or a left translation. A subset $F$ of $G$ is \emph{normal} if $-x+F+x\subseteq F$ for every $x\in G$. Note that a subset $F$ is normal whenever $-F$ is normal; also, if  $G$ is abelian every subset of $G$ is normal. If a  subgroup $H$ of $G$ is normal, then we may define the  \emph{quotient group} $G/H$ made of translates of $H$ called the \emph{cosets} of $H$ (when needed, see Subsection \ref{normal} for more about normality).

If $\rho$ is a binary relation on a set $X$ and $G=(X,+)$ is a group, we say that $\rho$ is \emph{compatible} if for all $x,x',y,y'\in G$, $x\rho x'$ and $y\rho\, y'$ imply $(x+y)\rho\, (x'+y)$ and $(x+y) \rho\, (x+y')$. If $\rho$ is a quasi-order, this condition amounts to $(x+y)\rho\, (x'+y')$, it expresses that left and right translations on $G$ preserve $\rho$. Alternatively,
\begin{equation}\label{equ:preserve}
x\rho\, y\; \text{iff}\; 0\, \rho\, (y-x)\; \text{iff}\; 0\, \rho\, (-x +y)\; \text{iff} -y\, \rho -x
\end{equation}
for all $x,y\in G$.

We recall that a \emph{quasi-ordered group} is a group equipped with a quasi-order  which is compatible with the group operation.  If the quasi-order is an order, the group is an ordered group.  Throughout, $G=(X,+\leq)$ will denote a  quasi-ordered group. An element $x$ of $X$ is called \emph{positive} if $0 \leq x$. The set of positive elements is often denoted with $G^+$, and it is called the \emph{positive cone} of $G$. So we have $a\leq b$ if and only if $-a+b \in  G^+$. For a group $G$, the existence of a positive cone specifies a quasi-order on $G$. A group $G$ is an ordered group if and only if there exists a subset $C$ (which is $G^+$) of $G$ such that:
\begin{itemize}
  \item $0\in C$.
  \item If $a \in C$ and $b \in C$, then $a+b \in C$.
  \item If $a \in C$, then $-g+a+g \in C$ for all $g\in G$.
  \item If $a \in C$ and $-a \in C$, then $a=0$.
\end{itemize}

Some general properties of ordered groups, notably about convexity are given in Section \ref {subsection:convex}.
See the books by Fuchs \cite{fuchsbook} and Glass \cite{glassbook} for a wealth of information about ordered groups.
\subsection{Threshold groups}
The following lemma shows that in a quasi-ordered group the quasi-orders $\leq _{pred}$ and $\leq _{succ}$ are equal. It has  a very simple proof that we give below.

\begin{lemma}\label{pred=suc}If $\leq$ is a compatible quasi-order on a group $G$ then
\begin{enumerate}[(a)]
 \item $\equiv$, the equivalence relation associated to the quasi-order $\leq$, is a compatible equivalence relation and in particular $\overline 0$, the equivalence class of $0$, is a normal subgroup;
 \item $<$ is a compatible strict order;
 \item $\leq _{pred}$ is a compatible quasi-order;
 \item $\leq_{pred}\, =\, \leq_{succ}$.
\end{enumerate}
\end{lemma}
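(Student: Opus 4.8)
The plan is to unwind the definitions of $\leq_{pred}$ and $\leq_{succ}$ using the translation-invariance of $\leq$, reducing every statement to a statement about the positive cone $G^+$ (or equivalently about the strict positive cone $\{x : 0 < x\}$) and the subgroup $\overline 0$. First I would prove (a): from compatibility, $x \equiv y$ iff $0 \equiv y - x$, so $\overline 0 = \{g \in G : 0 \equiv g\}$; this set is closed under $+$ and under inverses by the symmetry of $\equiv$, hence a subgroup, and it is normal because $x \equiv y$ and compatibility give $-g + x + g \equiv -g + y + g$ for all $g$ (using both left and right translations), so conjugation preserves $\overline 0$. That $\equiv$ is a compatible equivalence is then immediate from the same translation computations. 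For (b), $<$ is compatible because $x < y$ means $x \leq y$ and $y \not\leq x$; translating both relations by an arbitrary $g$ on either side preserves $\leq$ in both directions (translations are bijections preserving $\leq$), so it preserves $<$ as well.

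For (c), I would show $x \leq_{pred} y$ is equivalent to a translation-invariant condition. By definition $x \leq_{pred} y$ iff ($z < x \Rightarrow z < y$ for all $z$). Using compatibility of $<$, $z < x$ iff $0 < x - z$, i.e. $-x + z$... — more cleanly, substitute $z = x + w$ (as $z$ ranges over $X$, so does $w$): $x + w < x$ iff $w < 0$, and $x + w < y$ iff $w < -x + y$ (translating on the left by $-x$... wait, need $x+w < y \Leftrightarrow w < -x+y$, which is left-translation by $-x$). So $x \leq_{pred} y$ iff for all $w$, $w < 0 \Rightarrow w < -x + y$; that is, the condition depends only on $-x + y$, call it the predicate $Q(-x+y)$ where $Q(g)$ says "$w<0 \Rightarrow w < g$ for all $w$". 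Then compatibility of $\leq_{pred}$ follows: $x \leq_{pred} y$ iff $Q(-x+y)$, and $-x+y = -(u+x) + (u+y) = (-x') + (y')$ appropriately, so left translations are handled directly; for right translations I would instead parametrize via $z = w + x$ (so $w + x < x$ iff $w < 0$ using right-translation by $x$... note $w+x < x \Leftrightarrow w < 0$ is right cancellation) and $w + x < y$, which after right-translating is not obviously a function of $-x+y$ — this is where noncommutativity bites, so I must be careful to pick the parametrization that makes $\leq_{pred}$ manifestly a left-and-right-invariant relation. The cleanest route: show directly from the definition that if $x \leq_{pred} y$ then $x + g \leq_{pred} y + g$ and $g + x \leq_{pred} g + y$, each by a one-line substitution in the quantifier over $z$, using that $z \mapsto z + g$ and $z \mapsto g + z$ are order-automorphisms.

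Finally, (d): I expect this to be the crux, and the intended payoff of the lemma. I would show $\leq_{pred}\ \subseteq\ \leq_{succ}$ and symmetrically the reverse. Suppose $x \leq_{pred} y$; I want $x \leq_{succ} y$, i.e. $y < z \Rightarrow x < z$. Given $y < z$, compatibility lets me translate: subtract $y$ on the left, getting $0 < -y + z$; I need $0 < -x + z$, equivalently $x < z$. The trick is to turn a successor-condition into a predecessor-condition by passing to the dual group or by conjugating/reflecting via $x \mapsto -x$. Indeed, in a group $x < y$ iff $-y < -x$ (from Statement (\ref{equ:preserve})), so $\leq_{succ}$ on $G$ corresponds to $\leq_{pred}$ on $G$ read through the anti-automorphism $x \mapsto -x$; but one also checks that $x \mapsto -x$ maps $\leq_{pred}$ to $\leq_{pred}$ again because the defining condition of $\leq_{pred}$ is self-dual under this map once group compatibility is invoked — concretely, $x \leq_{pred} y$ iff ($-x \leq_{succ} -y$) by a direct computation reversing all the strict inequalities via Statement (\ref{equ:preserve}), and then applying the already-noted fact that negation is an anti-isomorphism swapping "predecessors" and "successors" a second time returns $\leq_{pred}$. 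Assembling these two half-turns gives $\leq_{pred}\ =\ \leq_{succ}$. The main obstacle is bookkeeping the directions of translation correctly in the noncommutative case; I would isolate as a preliminary observation that both $t_g^{\mathrm{right}}: z \mapsto z+g$ and $t_g^{\mathrm{left}}: z \mapsto g+z$, as well as $\iota: z \mapsto -z$, are bijections of $X$ with $\iota$ reversing $<$ and the translations preserving $<$, and then (c) and (d) each become a short substitution argument in the universally quantified defining conditions.
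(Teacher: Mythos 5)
Your overall strategy coincides with the paper's: (a) and (b) follow from translation invariance, (c) is a direct substitution in the quantifier over $z$ (your detour through the predicate $Q(-x+y)$ also works, since conjugation preserves $<$ and hence $Q$ is conjugation-invariant, but the fallback you settle on is exactly the paper's argument), and (d) rests on the fact that negation exchanges predecessors and successors while compatibility of $\leq_{pred}$ undoes the negation.

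There is, however, a genuine error in the key identity of (d): as displayed, ``$x \leq_{pred} y$ iff $-x \leq_{succ} -y$'' is false. Carrying out the computation you describe --- substituting $z=-w$ and reversing the strict inequalities via Statement (\ref{equ:preserve}) --- yields $x \leq_{pred} y$ iff $-y \leq_{succ} -x$; the arguments must be transposed because negation is an order \emph{anti}-automorphism. Your version, combined with the compatibility of $\leq_{succ}$, would assert that $\leq_{pred}$ is symmetric, which already fails for $\RR$ with the threshold order $\leq_1$ (there $\leq_{pred}$ is the usual total order). With the corrected identity the argument closes exactly as you intend: $x \leq_{pred} y$ iff $-y \leq_{pred} -x$ by the compatibility established in (c), and $-y \leq_{pred} -x$ iff $x \leq_{succ} y$ by the corrected swap applied to the pair $(-y,-x)$; this is precisely the paper's proof, phrased there as ``$0 \leq_{pred} a$ iff $-a \leq_{succ} 0$''.
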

\begin{proof}
\noindent $(a)$ and $(b)$ are immediate, because translations preserve the quasi-order.

\noindent $(c)$ Let $x,y\in G$. Suppose that $x\leq_{pred}y$. We prove $x+a\leq_{pred} y+a$ for every $a\in G$. Let $z<x+a$ and set $z':=z+(-a)=z-a$. Since $z=z'+a$ we have $z'+a<x+a$. Hence, by adding -a on the right, we have $z'<x$. Thus $z'<y$ i.e., $z+(-a)<y$ amounting to $z<y+a$ proving that $x+a\leq_{pred} y+a$ as claimed. Similarly we prove that $a+x\leq_{pred}a+y$.

\noindent $(d)$
Since $\leq_{pred}$ is compatible we deduce from the equivalence stated in (\ref{equ:preserve}) that $x\leq_{pred} y$ iff $0\leq_{pred} y-x$ iff $0\leq_{pred} -x +y$ iff $-y\leq_{pred} -x$. We now prove that for every $a$,
$0\leq_{pred} a$ iff $-a \leq_{succ}0$. Indeed, assume that $0\leq_{pred} a$ and let $t>0$. Then $-t<0$ and therefore $-t<a$, that is, $-a <t$, proving that $-a \leq_{succ}0$. Since $0\leq_{pred} a$ iff $-a \leq_{succ} 0$ we infer that $\leq_{pred}=\leq_{succ}$.
\end{proof}

As indicated  in $(d)$ of Lemma \ref{pred=suc},  if $\leq$ is a compatible quasi-order on a group $G$, $\leq_{pred}$ and $\leq_{succ}$ are equal. Thus, the equivalence relations $\equiv_{pred}$ and $\equiv_{succ}$ associated to $\leq_{pred}$ and $\leq_{succ}$ are equal, and hence compatible with the group operation. Consequently,
\begin{equation}\label{KdeG}
K(G), \mbox{ the equivalence class of } 0 \mbox{ with respect to } \equiv_{pred}, \mbox{ is a normal subgroup of } G.
\end{equation}
In particular, $K(G)=\{0\}$ iff $\leq_{pred}$ is an order. Note also that if $\leq$ is a total quasi-order, $\equiv_{pred}$ coincide with $\equiv$.

Lemma \ref{pred=suc} introduces  to the central definition of this paper:

\begin{definition} An ordered group $G= (X, +, \leq)$ is a threshold group if $\leq$ is a threshold order.
\end{definition}

The order $\leq$ of an ordered group is a threshold order iff $\leq_{pred}$ is a  total  order, or equivalently, $\leq_{succ}$ is a total  order. Indeed, according to  Lemma \ref{pred=suc}, on an ordered group the quasi-orders $\leq_{pred}$ and $\leq_{succ}$ coincide.

An ordered group $G$ is a threshold group iff $\leq$ is a semiorder and $K(G) = \{0\}$.
More generally, if the order of an ordered group $G$ is a semiorder, then up to a quotient
by some unordered normal subgroup, namely $K(G)$, this is a threshold group.
Our aim in this paper is to describe the structure of threshold group.

\section{Presentation of the main results}\label{section:presentation}

An illustration of the interactions between order and group operation in an ordered group, which motivates this paper, is given by Theorem \ref {2+2,3+1} and its corollary.

\subsection{Posets embeddable in ordered groups}

We say that a poset $P=(X,\leq)$ \emph{embeds} into an ordered group $G=(Y,+,\leq')$ if $(X,\leq)$ embeds into $(Y,\leq')$.

\begin{theorem} \label {2+2,3+1} Let $n\geq 2$ be an integer and $p, q$ be any two positive integers such that $n= p+q$. Then, an ordered group $G$ embeds $1\oplus n$ if and only if it embeds $(q+1) \oplus p$.
\end{theorem}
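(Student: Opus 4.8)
The plan is to exploit the homogeneity of ordered groups: translations are order-automorphisms, so a copy of any finite configuration can be "moved around" freely, and we can build one configuration from another by translating pieces apart or together. Concretely, write the disjoint sum $1 \oplus n$ as a single point $a$ together with an $n$-element antichain, and $(q+1)\oplus p$ as a $(q+1)$-element antichain together with a disjoint $p$-element antichain. First I would observe that it suffices to prove one direction, since the roles of $1\oplus n$ and $(q+1)\oplus p$ are symmetric in $n = p+q$: indeed $(q+1)\oplus p$ has the form $1 \oplus n'$ "read from the other side" only in degenerate cases, so instead I would prove both implications but each by the same translation trick; alternatively, pass to the dual group $G^d$, which swaps $p$ and $q$ appropriately, to halve the work.

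For the forward direction, suppose $G$ embeds $1 \oplus n$; fix elements $c < b_1, \dots, c< $ is false — rather, fix $c$ below nothing, i.e. an antichain $\{x_1,\dots,x_n\}$ together with an element $a$ incomparable to all of them? No: in $1\oplus n$ the single element is comparable to all $n$ others. So fix $a \in G$ and an antichain $A = \{x_1, \dots, x_n\}$ with $a < x_i$ for all $i$. By translating by $-a$ we may assume $a = 0$, so $0 < x_i$ for every $i$ and the $x_i$ are pairwise incomparable. Now I would split $A$ into two blocks, $B = \{x_1,\dots,x_{q+1}\}$ and $C' = \{x_{q+2},\dots,x_n\}$ of sizes $q+1$ and $p-1$, and produce a $(q+1)\oplus p$ by adjoining to the antichain $B$ a new antichain of size $p$ that sits entirely above $B$. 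The key step is: choose a "large" positive element — concretely some $x_j$, or a sum like $x_1 + \dots$ — and translate $C' \cup \{0\}$ (an antichain of size $p$, since $0$ is incomparable to the $x_i$) by this element to lift it above all of $B$. Compatibility of $\leq$ and the translation identities in~(\ref{equ:preserve}) are what make the incomparabilities and comparabilities transfer.

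The main obstacle is controlling the comparabilities created by translation: when I translate the antichain $C' \cup\{0\}$ by an element $g$, I need (i) $g + (C'\cup\{0\})$ to remain an antichain, which is automatic since translation is an order-automorphism; (ii) every element of $B$ to lie strictly below every element of $g + (C'\cup\{0\})$; and (iii) no spurious comparabilities between $B$ and the translated set in the wrong direction, and no collapse of $B \cup (g + (C'\cup\{0\}))$ to fewer than $2q+p+1$ points. Requirement (ii) is the delicate one: I would take $g$ to be one of the $x_i$'s from $B$ itself, say $g = x_{q+1}$, but then $x_{q+1}$ might not dominate the other $x_j \in B$. The fix is to use a genuinely larger positive element, e.g. exploit that $0 < x_i$ for all $i$ gives $0 < x_i + x_j$ etc., or more cleanly: since we only need the statement about embeddability, pick $g$ so that $g > x_i$ for all $i \le q+1$ simultaneously — such a $g$ exists because $\{x_1, \dots, x_{q+1}, a=0\}$ generating translations lets us write $g = x_1 + x_2 + \cdots$; verifying $x_i < g$ uses compatibility repeatedly ($0 < x_k \Rightarrow x_i < x_i + x_k \Rightarrow \cdots$). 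Once $g$ dominates all of $B$, the set $B \cup (g + C' \cup \{g\})$ realizes $(q+1)\oplus p$: the translates are pairwise incomparable among themselves and all strictly above $B$, and $B$ is an antichain of size $q+1$. The reverse direction is handled identically after passing to $G^d$ and relabelling, or by the mirror-image construction (splitting off a dominating antichain and "pushing it down" by a negative translation to merge it into a single element's downset). I expect the whole argument to be short once the choice of the dominating element $g$ is pinned down; that choice is the only real content.
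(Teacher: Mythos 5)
Your proposal is built on a misreading of the notation, and as a result it aims at the wrong statement. In this paper $p \oplus q$ denotes the \emph{direct sum} of two \emph{chains} with $p$ and $q$ elements: a disjoint union in which every element of one chain is incomparable to every element of the other (this is fixed in the subsection on lexicographical sums, and is why $2\oplus 2$ and $3\oplus 1$ are the classical obstructions to interval orders and semiorders). So $1\oplus n$ is an isolated point together with an $n$-element chain, and $(q+1)\oplus p$ is a $(q+1)$-element chain and a $p$-element chain with no comparabilities between them. You instead take $1\oplus n$ to be a point lying strictly \emph{below} an $n$-element \emph{antichain}, and $(q+1)\oplus p$ to be an ordinal (linear) sum of two antichains, one stacked above the other. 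Your entire construction --- choosing a dominating element $g$ and translating an antichain up past another antichain --- therefore addresses a different pair of posets, and completing it would not prove the theorem. (It is also internally inconsistent: after normalizing $a=0$ with $0<x_i$ for all $i$, the set $C'\cup\{0\}$ cannot be an antichain, since $0$ is comparable to every $x_i$.)

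The actual argument does share your one sound instinct --- normalize by a translation so that the isolated point is $0$ --- but the configuration to be produced is two mutually incomparable chains, and the second chain is manufactured from \emph{differences} of elements of the given chain rather than from a dominating translate. Concretely, given $0$ incomparable to $y_0<\cdots<y_{n-1}$, one takes $Y:=\{y_0,\dots,y_{p-1}\}$ and $X:=\{y_{p-1}-y_{n-1},\dots,y_{p-1}-y_p,\,0\}$; compatibility shows $X$ is a $(q+1)$-chain with top $0$, and a short argument (if $x\in X$ were comparable to $y\in Y$ then $x\le y$, whence $y_{p-1}-y_{n-1}\le y_{p-1}$ and so $0\le y_{n-1}$, contradicting the incomparability of $0$ with the $y_i$) shows $X$ and $Y$ are mutually incomparable. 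The converse is a mirror-image construction, again with differences, not a passage to the dual group. You would need to restart from the correct definition of $\oplus$ before any of the translation machinery can be brought to bear.
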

\begin{proof}
Suppose that $G$ embeds $1 \oplus n$. Let $A$ be a subset of $G$ isomorphic to $1 \oplus n$, say $A:= \{x_{0},y_0,\dots ,y_{n-1}\}$, with $y_0< \dots <y_{n-1}$ and $x_0$ incomparable to all $y_i$ for $i<n$. We may suppose $x_0=0$. Indeed, the translation $t_{-x_{0}}$, defined by $t_{-x_{0}}(u):= u-x_{0}$ for all $u\in G$ is a bijective map preserving the ordering, hence it maps $A$ to an isomorphic copy $A'$, and maps $x_0$ to $0$.
Let $X:= \{y_{p-1}-y_{n-1}, \dots, y_{p-1}-y_i, \dots, y_{p-1}-y_{p}, 0\}$ and $Y:= \{ y_{0},\dots, y_{p-1}\}$. Then $B:= X\cup Y$ is the direct sum of the chains $X$ and $Y$, hence it is isomorphic to $(q+1) \oplus p$. Indeed, if some $x\in X$ is comparable to some $y\in Y$ then $x\leq y$, otherwise $y<0$, which is impossible. In that case, we have $y_{p-1}-y_{n-1}\leq x \leq y \leq y_{p-1}$, from which it follows that $-y_{n-1} \leq 0$ , that is, $0\leq y_{n-1}$, which is impossible.\\
Conversely, suppose that $G$ embeds $(q+1) \oplus p$. Let $B:=X\cup Y$ with $X:= x_{0} < \dots <x_{q}$, $Y:= y_0<\dots < y_{p-1}$ and every element of $X$ is incomparable to every element of $Y$. We may suppose that $x_{q}= 0$ (otherwise, take the image of $B$ by the translation $t_{-x_{q}}$). Then $\{0\}\cup Z$ where $Z= Y\cup \{y_{p-1}-x_{q-1}, \dots, y_{p-1}-x_i, \dots, y_{p-1}-x_{0} \}$ is isomorphic to $1\oplus n$. Indeed, if $0$ is comparable to some element $y$ of $Z$, necessarily $y\not \in Y$ hence $y=y_{p-1}-x_i$ for some $i$ and in fact $y_{p-1}-x_i<0$ for some $i$ which is impossible.
\end{proof}

An immediate consequence of Theorem \ref{2+2,3+1} is this.

\begin{corollary}\label{cor:semiorder}
The order of an ordered group is an interval order if and only if it is a semiorder.
\end{corollary}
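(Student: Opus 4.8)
The plan is to derive Corollary \ref{cor:semiorder} directly from Theorem \ref{2+2,3+1} by unwinding the combinatorial meaning of the forbidden configurations $2\oplus 2$ and $3\oplus 1$ in terms of the families $1\oplus n$. Recall that, by definition, the order of a poset is an interval order precisely when it does not embed $2\oplus 2$, and it is a semiorder precisely when it embeds neither $2\oplus 2$ nor $3\oplus 1$. So the only thing to prove is that, for an \emph{ordered group} $G$, the absence of $2\oplus 2$ already forces the absence of $3\oplus 1$; the reverse implication (semiorder $\Rightarrow$ interval order) is trivial since every semiorder is an interval order by definition.

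First I would observe that $2\oplus 2 = 2\oplus 2$ arises from Theorem \ref{2+2,3+1} with $n=2$, $p=q=1$: the theorem says $G$ embeds $1\oplus 2$ if and only if $G$ embeds $2\oplus 2$ (since $(q+1)\oplus p = 2\oplus 1$... wait, with $p=q=1$ we get $(q+1)\oplus p = 2\oplus 1$, not $2\oplus 2$). Let me instead take $n=3$: with $p=1$, $q=2$ the theorem gives that $G$ embeds $1\oplus 3$ iff $G$ embeds $3\oplus 1$; with $p=2$, $q=1$ it gives that $G$ embeds $1\oplus 3$ iff $G$ embeds $2\oplus 2$. Chaining these two equivalences, $G$ embeds $3\oplus 1$ if and only if $G$ embeds $2\oplus 2$. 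Hence if $G$ does not embed $2\oplus 2$, then $G$ does not embed $3\oplus 1$ either, so the order is automatically a semiorder. This is the crux of the argument, and it is genuinely short once Theorem \ref{2+2,3+1} is in hand.

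Thus the proof I would write is essentially: ``If the order of $G$ is an interval order, then $G$ does not embed $2\oplus 2$. By Theorem \ref{2+2,3+1} applied with $n=3$, $p=2$, $q=1$, not embedding $2\oplus 2 = (q+1)\oplus p$ is equivalent to not embedding $1\oplus 3$; and applying the theorem again with $n=3$, $p=1$, $q=2$, not embedding $1\oplus 3$ is equivalent to not embedding $3\oplus 1 = (q+1)\oplus p$. Hence $G$ embeds neither $2\oplus 2$ nor $3\oplus 1$, so its order is a semiorder. The converse holds for any poset, since semiorders are interval orders by definition.'' The one subtlety to state carefully is that $3\oplus 1$ and $1\oplus 3$ denote the same poset (direct sum is commutative up to isomorphism), and likewise $2\oplus 2$; I would make that explicit so the two applications of the theorem line up.

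I do not anticipate any real obstacle here: the content is entirely in Theorem \ref{2+2,3+1}, whose proof is the substantive translation-of-witnesses argument, and the corollary is a one-line specialization. The only thing to be careful about is bookkeeping of the parameters $p$ and $q$ in the two invocations of the theorem, and noting that the implication ``semiorder $\Rightarrow$ interval order'' requires nothing about groups at all.
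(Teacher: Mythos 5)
Your proof is correct and is exactly the paper's intended argument: the paper states the corollary as an immediate consequence of Theorem \ref{2+2,3+1}, and your two instantiations ($n=3$ with $(p,q)=(2,1)$ giving $2\oplus 2$, and with $(p,q)=(1,2)$ giving $3\oplus 1$, both equivalent to embedding $1\oplus 3$) are the right parameter bookkeeping. The paper also sketches an alternative proof via the traces $\leq_{pred}$ and $\leq_{succ}$ (Lemmas \ref{intervalorder}, \ref{critical}, \ref{pred=suc}), but your route is the primary one.
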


Note that Lemmas \ref{intervalorder}, \ref{critical} and \ref{pred=suc} yield another  proof of Corollary \ref{cor:semiorder}. Indeed,  let $G$ be a group equipped with a compatible  quasi-order $\leq$. Then, by Lemma \ref{pred=suc},  the quasi-orders $\leq_{pred}$ and $\leq_{succ}$ are compatible and in fact equal. According to Lemma \ref{intervalorder}, $\leq $ is an interval order iff $\leq _{pred}$ is a total quasi-order. According to Lemma \ref{critical}, $\leq$ is a semiorder provided that the intersection of $\leq_{pred}$ and $\leq_{succ}$ is a total quasi-order. Hence, the order on an ordered group is an interval order iff this is a semiorder.

From this corollary, we are lead to examine the order structure of groups equipped with a semiorder.

We start with an example. Ordering the intervals of the real numbers of the form $[a, a+1[$ by the order defined in Equation (\ref{equ:interval}) of Subsection \ref{subsection2}  amounts to order  the reals by $x \leq_1 y$ if $x=y$ or $1\leq y-x $. The additive group of the reals equipped with
this order is an ordered group, the order is a semiorder, in fact a threshold order since $\leq_{1  {pred}}$ coincide with the order $\leq$ on the reals.
Replacing $1$ by a positive real $\alpha$, the ordering we obtain, say $\leq_{\alpha}$, is different, but the  ordered set is isomorphic to the previous one.

This construction generalizes: 

The  \emph{center} $Z(G)$ of  a group $G$  is the set of elements that commute with every element of~$G$.
\begin{proposition}\label{center} Let $G:=(X,+,\preceq)$ be a totally ordered group and let $\alpha\in Z(G)$ be positive. Set
\[x\leq_{\alpha} y \mbox{ if and only if } x=y \; \text{or} \; \alpha\preceq y-x\]
and
\[x\leq_{\check{\alpha}} y \mbox{ if and only if } \; x=y \; \text{or}\; \alpha \prec y-x. \]
Then $G_{\alpha}:= (X, +, \leq_{\alpha})$ and $G_{\check{\alpha}}:= (X, +, \leq_{\check{\alpha}})$ are threshold groups.
\end{proposition}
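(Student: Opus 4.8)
The strategy is to verify directly that $\leq_\alpha$ (and symmetrically $\leq_{\check\alpha}$) is a compatible order whose trace $\leq_{\alpha\,pred}$ is the original total order $\preceq$; by the remark following the definition of threshold group, a semiorder with $\leq_{pred}$ a total order is exactly a threshold order, so this suffices. The whole argument will lean on the centrality of $\alpha$: because $\alpha\in Z(G)$, we have $y-x = y - x$ computed either way and, crucially, for any $g$ the condition $\alpha\preceq g$ is preserved under the substitutions $g\mapsto (y+a)-(x+a)$ and $g\mapsto (a+y)-(a+x)$. Let me separate the bookkeeping into three parts.

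\textbf{Step 1: $\leq_\alpha$ is a compatible order.} Reflexivity is built in. For antisymmetry, if $x\neq y$, $\alpha\preceq y-x$ and $\alpha\preceq x-y$, then since $\preceq$ is a total \emph{group} order and $\alpha$ is positive we get $0\prec \alpha \preceq y-x$ and $0 \prec \alpha\preceq x-y=-(y-x)$, forcing $y-x\succ 0$ and $y-x\prec 0$, a contradiction. For transitivity, suppose $x\leq_\alpha y\leq_\alpha z$ with the nontrivial case $x\neq y\neq z$ and $x\neq z$; then $\alpha\preceq y-x$ and $\alpha\preceq z-y$, and I want $\alpha\preceq z-x$. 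Here I use centrality: $z-x = (z-y)+(y-x)$, and since $\alpha$ is positive, $\alpha\preceq z-y$ implies $\alpha = \alpha + 0 \preceq \alpha + \big((z-y)-\alpha\big)$ — more cleanly, $z-x=(z-y)+(y-x)\succeq \alpha + \alpha \succeq \alpha + 0 = \alpha$ using $y-x\succeq\alpha\succ 0$ to add $\alpha$ on the right of $z-y\succeq\alpha$ (translations are order-preserving in a totally ordered group, and no commutativity is needed for \emph{this} monotonicity step). For compatibility, I invoke the characterization in Equation~(\ref{equ:preserve}): it is enough to check that $x\leq_\alpha y$ iff $0\leq_\alpha y-x$ iff $0\leq_\alpha -x+y$. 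Now $0\leq_\alpha w$ means $w=0$ or $\alpha\preceq w$; and $(y+a)-(x+a)=y-x$ while $(a+y)-(a+x)=a+(y-x)-a=y-x$ precisely because $\alpha$ being central is not what is used here — rather $a+w-a$ conjugation: we need $\alpha\preceq w \iff \alpha \preceq a+w-a$, which holds since $\preceq$ is compatible hence conjugation-invariant on its positive cone, and $\alpha\in Z(G)$ gives $a+\alpha-a=\alpha$ so $\alpha\preceq w\iff 0\preceq w-\alpha \iff 0\preceq a+(w-\alpha)-a = (a+w-a)-\alpha \iff \alpha\preceq a+w-a$. This shows both left and right translations preserve $\leq_\alpha$.

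\textbf{Step 2: identify the trace.} I claim $x\leq_{\alpha\,pred} y$ iff $x\preceq y$. If $x\preceq y$: take $z<_\alpha x$, so $z\neq x$ and $\alpha\preceq x-z$; then $y-z=(y-x)+(x-z)\succeq 0+\alpha=\alpha$ (adding the nonnegative $y-x$ on the left of $x-z\succeq\alpha$... careful with sides — use $x-z\succeq\alpha$ and add $y-x\succeq 0$ on the left: $y-z=(y-x)+(x-z)\succeq (y-x)+\alpha \succeq 0+\alpha=\alpha$; both additions are legitimate monotonicities), and $y\neq z$ since $y-z\succeq\alpha\succ 0$, so $z<_\alpha y$. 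Conversely, if $x\not\preceq y$, i.e. $y\prec x$: I must produce $z$ with $z<_\alpha x$ but $z\not<_\alpha y$. Take $z:=x-\alpha$; then $x-z=\alpha$ so $z<_\alpha x$ (and $z\neq x$ since $\alpha\succ 0$), while $y-z=y-x+\alpha\prec \alpha$ (because $y-x\prec 0$, so adding $\alpha$ gives $y-x+\alpha\prec\alpha$), so $z\not<_\alpha y$. Hence $x\not\leq_{\alpha\,pred}y$. Thus $\leq_{\alpha\,pred}\,=\,\preceq$, which is a total order.

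\textbf{Step 3: conclude.} By Step 1, $G_\alpha$ is an ordered group; by Step 2 its trace $\leq_{\alpha\,pred}$ is a total order; by Lemma~\ref{pred=suc}(d), $\leq_{\alpha\,succ}=\leq_{\alpha\,pred}$ is then also a total order, so by the remark following the definition of threshold order (or Lemmas~\ref{intervalorder} and~\ref{critical}), $\leq_\alpha$ is a threshold order, i.e. $G_\alpha$ is a threshold group. The argument for $\leq_{\check\alpha}$ is word-for-word the same with every occurrence of "$\alpha\preceq$" replaced by "$\alpha\prec$"; the only point deserving a glance is transitivity and the trace computation, where strictness of $\prec$ combined with positivity of $\alpha$ makes the inequalities strict in the right direction, and again one finds $\leq_{\check\alpha\,pred}\,=\,\preceq$.

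\textbf{Main obstacle.} The only genuinely delicate point is keeping the left/right monotonicity bookkeeping correct in the noncommutative setting: in a totally ordered (not necessarily abelian) group one may add a fixed element on a fixed side of an inequality, and conjugation preserves the positive cone, but one may \emph{not} freely commute $\alpha$ past other elements unless one explicitly uses $\alpha\in Z(G)$ — which is exactly why the hypothesis $\alpha\in Z(G)$ appears, and it enters precisely in the compatibility check of Step 1 (the conjugation identity $a+\alpha-a=\alpha$) and implicitly in making "$y-x$" well-behaved on both sides. Everything else is routine verification.
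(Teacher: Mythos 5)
Your proof is correct and follows exactly the route the paper itself sketches (the paper only outlines it): verify that $\leq_{\alpha}$ is a compatible order using the conjugation identity $a+\alpha-a=\alpha$ from $\alpha\in Z(G)$, show that its trace equals $\preceq$, and conclude via Lemma \ref{pred=suc}. One small correction to your Step~3: the $\leq_{\check{\alpha}}$ case is \emph{not} word-for-word identical, because your witness $z:=x-\alpha$ gives $x-z=\alpha$, and $\alpha\prec x-z$ then fails; for $\leq_{\check{\alpha}}$ take instead $z:=y-\alpha$, so that $x-z=(x-y)+\alpha\succ\alpha$ while $y-z=\alpha\not\succ\alpha$, which restores the argument. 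With that adjustment everything goes through.
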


The proof is straightforward.  Using the fact that $\alpha\in Z(G)$, one proves first that   $\leq_{\alpha}$ and $\leq_{\check{\alpha}}$ are compatible order relations. Next,  one   proves that the quasi-orders ${\leq_{\alpha}}_{ pred}$ and ${\leq_{\check{\alpha}}}_{ pred}$  associated to ${\leq_{\alpha}}$ and ${\leq_{\check{\alpha}}}$ are equal to $\preceq$. We will see below how to deduce this result from the general construction given in Proposition \ref{prop:thresholdgroup}.

In the above examples, $\alpha$ is a \emph{threshold}; to distinguish the first example from the second, we say that in the first example  the threshold is \emph{attained}.

We prove the following extension of Scott-Suppes's representation of semiorders.

\begin{theorem}\label{thm:1}
Let $P$ be a poset. The following propositions are equivalent.

\begin{enumerate}[(i)]
\item The order on $P$ is a semiorder.
\item $P$ is isomorphic to a collection $\mathcal J$ of intervals of some chain which are pairwise incomparable with respect to inclusion and ordered by the order relation defined by Statement (\ref{equ:interval}).
\item $P$ is isomorphic to a collection $\mathcal J$ of intervals of some totally ordered abelian group $G$ ordered by the order relation defined in Statement (\ref{equ:interval}); these intervals being of the form $[x, x+ \alpha[= \{y\in G: x\leq y<x+ \alpha\}$ for some positive $\alpha$.
\item $P$ is embeddable into an abelian threshold group with some attained threshold $\alpha$.
\item $P$ is embeddable into a threshold group.
\end{enumerate}
\end{theorem}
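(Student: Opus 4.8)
The plan is to prove the chain of implications $(i)\Rightarrow(iii)\Rightarrow(iv)\Rightarrow(v)\Rightarrow(i)$, together with the easy equivalence $(ii)\Leftrightarrow(iii)$, so that all five statements are tied together. The implications $(iv)\Rightarrow(v)$ and $(iii)\Rightarrow(ii)$ are essentially definitional: an abelian threshold group is a threshold group, and an interval $[x,x+\alpha[$ in a totally ordered abelian group, ordered by Statement (\ref{equ:interval}), is just an interval of the underlying chain; one checks these intervals are pairwise incomparable for inclusion since they all have the ``same length'' $\alpha$. The implication $(v)\Rightarrow(i)$ is immediate from Corollary \ref{cor:semiorder}: a threshold group is in particular an ordered group whose order is a semiorder, and embedding into it transfers the property of not embedding $2\oplus 2$ nor $3\oplus 1$ down to $P$ (since those obstructions are finite). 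For $(ii)\Rightarrow(iii)$, given a collection $\mathcal J$ of pairwise $\subseteq$-incomparable intervals of a chain $C$, I would embed $C$ into a totally ordered abelian group (e.g. take the free abelian group on $C$ with a suitable lexicographic-type order, or more cheaply the ordered group $\bigoplus_{C}\ZZ$ ordered lexicographically) and then renormalize each interval to have a fixed formal length $\alpha$ by working in $G\times\ZZ$ ordered lexicographically: replace $[a,b]$ (or $]a,b[$, $[a,b[$, $]a,b]$) by $[(a,0),(a,0)+\alpha[$ with $\alpha:=(1,0)$ say, checking that the order relation (\ref{equ:interval}) is preserved because incomparability for $\subseteq$ forces the left endpoints to be ordered the same way as the intervals themselves.

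The substantive implication is $(i)\Rightarrow(iii)$, which is the infinite Scott--Suppes representation. Here is the approach. Let $P=(X,\leq)$ be a semiorder. By Lemma \ref{critical} the intersection $\leq_{pred}\cap\leq_{succ}$ is a total quasi-order; since on a general poset $\leq_{pred}$ and $\leq_{succ}$ need not coincide, first pass to $P/\!\equiv$ if necessary, or directly work with the total quasi-order $\sqsubseteq\ :=\ \leq_{pred}\cap\leq_{succ}$ and its associated equivalence $\approx$. The quotient $T:=(X/\!\approx,\sqsubseteq)$ is a chain. The idea is to build a totally ordered abelian group $G$ containing $T$ as a chain, a positive element $\alpha\in G$, and an order-embedding $x\mapsto \ell(x)$ of $(X,\leq)$ into the left endpoints, so that $x\leq y$ in $P$ iff $\ell(x)+\alpha \le \ell(y)$ in $G$, i.e.\ iff the interval $[\ell(x),\ell(x)+\alpha[$ lies strictly below $[\ell(y),\ell(y)+\alpha[$. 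Concretely I would take $G$ to be a lexicographic product $A\times B$ (or an ordered-group completion of a suitable subgroup of it) where the $A$-coordinate records the position of $x$ in the chain $T$ ``coarsely'' and the $B$-coordinate records finer information needed to separate incomparable pairs; the threshold $\alpha$ should be $(1,0)$ (attained). The condition to verify is that $x\leq y \iff$ there is a strict $\sqsubseteq$-jump of size at least ``one $\alpha$'' between the classes of $x$ and $y$ — this is exactly the semiorder analogue of the fact that an interval order is captured by $\leq_{pred}$ being total (Lemma \ref{intervalorder}), now refined by $\leq_{succ}$ as well.

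The main obstacle I anticipate is the construction of $G$ and the embedding handling the interaction between the two traces simultaneously: for an interval order one only needs $\leq_{pred}$ to be total, but for a semiorder one must encode that both $\leq_{pred}$ and $\leq_{succ}$ are total \emph{with the same threshold}, and that $x<y$ is equivalent to the left endpoints differing by at least $\alpha$ in \emph{both} directions consistently. Making the formal ``length'' $\alpha$ literally attained (clause (iv)) rather than merely approached requires care about whether jumps in $T$ are attained or are limits; the device of multiplying by a lexicographic factor $\ZZ$ (or $\QQ$, or a larger ordered group) to create ``room'' is the standard fix, but one must check compatibility of $+$ with $\leq_\alpha$ on the nose, and that no spurious comparability is introduced between elements whose classes are $\sqsubseteq$-close. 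I expect the cleanest route is: embed $T$ into its Dedekind-type completion inside an ordered group, set $G:=\ZZ\times(\text{that group})$ ordered lexicographically with $\alpha:=(1,0)$, define $\ell(x):=(0,\tau(x))$ where $\tau$ reflects the class of $x$, and then verify $x\leq y\iff \ell(x)+\alpha\le_{lex}\ell(y)$ by a short case analysis splitting on whether the classes of $x,y$ are equal, $\sqsubseteq$-comparable with a jump, or not — invoking Lemma \ref{critical} (via critical pairs) at exactly the step where a strict inequality in $P$ must be promoted to a genuine gap. Finally, reading the resulting intervals $[\ell(x),\ell(x)+\alpha[$ back gives simultaneously (iii) and, since $G$ is an abelian threshold group with attained threshold $\alpha$, statement (iv); and $(iii)\Rightarrow(ii)\Rightarrow(v)$ as above closes the cycle.
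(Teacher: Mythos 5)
Your outer implications are mostly fine ($(iii)\Rightarrow(iv)$ via the left-endpoint map into $G_\alpha$, $(iv)\Rightarrow(v)$, $(v)\Rightarrow(i)$ via Corollary \ref{cor:semiorder}), but the core implication $(i)\Rightarrow(iii)$ is where the proposal breaks down, and the concrete recipe you give cannot be repaired as stated. With $G=\ZZ\times B$ ordered lexicographically, $\alpha=(1,0)$ and $\ell(x)=(0,\tau(x))$, the test $\ell(x)+\alpha\le_{lex}\ell(y)$ reads $(1,\tau(x))\le_{lex}(0,\tau(y))$: if the $\ZZ$ coordinate dominates this never holds, so the represented poset is an antichain; if the $B$ coordinate dominates it holds exactly when $\tau(x)\sqsubset\tau(y)$, so the represented poset is a weak order (a lexicographical sum of antichains over the chain $T$). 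Neither recovers a general semiorder. This is not a fixable detail: the whole content of the (infinite) Scott--Suppes theorem is that the quotient chain $T=X/\!\approx$ of the trace quasi-order does not by itself determine which $\sqsubseteq$-comparable pairs are $\le$-comparable; one must construct a single function $\ell$ into a totally ordered group so that comparability corresponds to a gap of size at least $\alpha$, and no short case analysis on $T$ produces it. The paper sidesteps the construction entirely: it proves $(i)\Rightarrow(iv)$ by combining the \emph{finite} Scott--Suppes theorem with the Compactness Theorem of first-order logic (Robinson's diagram method --- a language with $+$, $\le$, constants $0$, $\alpha$ and $(a_x)_{x\in P}$, whose finite subtheories are consistent by Scott--Suppes), and then reads off $(iii)$ from the resulting embedding. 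If you want an explicit construction instead of compactness, you need something substantially more elaborate (an ultraproduct or a transfinitely built non-archimedean group), not a two-factor lexicographic product indexed by $T$.

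A secondary gap: your ``easy equivalence'' $(ii)\Leftrightarrow(iii)$ is only easy in one direction. The renormalization you propose for $(ii)\Rightarrow(iii)$ --- keep each left endpoint $a$ and replace the interval by $[(a,0),(a,0)+\alpha[$ --- changes the order of Statement (\ref{equ:interval}): whether $[a,b]<[a',b']$ depends on comparing $b$ with $a'$, whereas after renormalization it depends on comparing $a+\alpha$ with $a'$, and these disagree in general. In fact $(ii)\Rightarrow(iii)$ is essentially as hard as $(i)\Rightarrow(iii)$. The cheap way to bring $(ii)$ into the cycle is $(ii)\Rightarrow(i)$, which the paper proves directly: intervals of a chain never embed $2\oplus2$, and if $3\oplus1$ embedded, the interval playing the isolated point would have to contain the middle interval of the $3$-chain, contradicting pairwise incomparability under inclusion. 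I recommend restructuring your cycle as $(i)\Rightarrow(iv)\Rightarrow(iii)\Rightarrow(ii)\Rightarrow(i)$ and $(iv)\Rightarrow(v)\Rightarrow(i)$, with compactness (or some genuine substitute) doing the work in the first arrow.
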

\begin{proof} Implication $(i) \Rightarrow (iv)$ follows from Scott-Suppes Theorem and the Compactness Theorem of first order logic. To show that, we use the "diagram method" of Robinson (see Chapter 5 of \cite{shoenfield} for the logic setting).  Let $P$ be a semiorder and let  $\mathcal L$ be the first order language consisting of a binary predicate symbol $\leq$, the symbol of a binary operation $+$, constant symbols $0$, $\alpha$, $(a_x)_{x\in P}$. We define a set $\mathcal A$ of sentences such that a model (if any)  is an abelian  threshold group $G$ such that the group operation $+^G$, the order $\leq^{G}$ on the group, the element $\alpha^G$ of $G$ are the interpretation of $\leq$, $+$ and $\alpha$ and the map from $P$ into $G$ which associates to $x$ the interpretation $a_x^G$ of $a_x$ is an embedding. The sentences are chosen is such a way that every finite subset of sentences will be consistent via Scott-Suppes Theorem. Compactness theorem will ensure that the whole set is consistent, thus has a model, say $G$. The interpretation of the constants provides an embedding into $G$.\\
$(iv) \Rightarrow(iii)$ Let $f$ be an embedding of $P$ into $G$. Let $\mathcal J:= \{[f(x), f(x)+\alpha[: x\in P\}$ and $\overline f: P\rightarrow \mathcal J$ defined by setting $\overline f (x):= [f(x), f(x)+\alpha[$. Then, as it is easy to check, $\overline f$ is an embedding.\\
$(iii)\Rightarrow (ii)$ Obvious.\\
$(ii) \Rightarrow (i)$ Observe that if two intervals are incomparable, with respect to the order defined on intervals, then they must have a nonempty intersection. Hence, if $3\oplus 1$ is embeddable into a collection of intervals, the image of the 1-element chain must include the image of the intermediate element of the 3-element chain. This contradicts the fact that $\mathcal J$ is an antichain.\\
$(iv)\Rightarrow (v) \Rightarrow (i)$ Obvious.
\end{proof}

We illustrate this result first with a new notion of embeddability between posets and the corresponding equivalence: two posets being equivalent if every group embedding one of these posets embeds the other (see Section \ref{section:comparison} and Item (2) of Proposition \ref {equivalence class}). Next, we illustrate it  with the notion of dimension.
Let $P:=(X,\leq)$ be a poset. A \emph{linear extension} of $\leq$ (or of $P$) is a total order $\preceq$ on $X$ such that $x \preceq y$ whenever $x\leq y$. The \emph{dimension} of $P$, denoted by $dim(P)$, is the least
cardinal $\kappa$ such that there exists a family $\mathcal{R}$ of $\kappa$ linear extensions $\preceq_{i}$, $i<\kappa$,  of $\leq $ so
that $x\leq y$ iff $x\preceq_i y$ for all $i<\kappa$ \cite{dm}.
The dimension of finite posets is finite, and the dimension of finite interval orders is unbounded (see \cite{{brt}} and \cite{fhrt}). On the other hand and according to Rabinovitch \cite{rabinovitch}, finite semiorders have dimension at most $3$. This extends to infinite semiorders (via the Compactness Theorem of first order logic). In Section \ref{subsection:dimension} we give an effective proof that threshold groups, with attained threshold, have dimension at most $3$.  Thus, Rabinovitch's result follows from Theorem \ref{thm:1} and Proposition~\ref{thresholddimension}.

Let us illustrate the scope of Theorem \ref {2+2,3+1}. Let $n$ be a positive integer. For $n=1$ or $n=2$, Theorem \ref{2+2,3+1} says nothing.  But, as it is immediate to see by using translations, \emph{an ordered group $G$ does not embed $1\oplus n$ if and only if the set $inc(0)$  of elements of $G$ which are incomparable to $0$  does not embed an $n$-element chain}. For a poset, not embedding an $n$-element chain amounts to be a union of less than $n$ antichains \cite{mirsky}; for $n=3$, these posets are said to be \emph{bipartite}. We are lead to the following:

\begin{problem}\label{prob1} Describe the orders of ordered groups $G$ for which the set $inc(0)$ is a union of less than $n$ antichains.
\end{problem}

Ordered groups which do not embed $1\oplus 1$ are totally ordered groups. Ordered groups whose order is a weak order are easy to describe.
Indeed, we prove in Subsection \ref{subsection:claim:weakorder} that:%
 \begin{proposition}\label{claim:weakorder}Let $G:=(X,+,\leq)$ be an ordered group and $inc(0)$ be the set of elements of $X$ incomparable to $0$. The following assertions are equivalent:
 \begin{enumerate}[(i)]
\item The order of $G$ is a weak order.
\item $inc(0)$ is an antichain in $(X,\leq)$.
\item $inc(0)\cup \{0\}$ is a subgroup of $G$.
\end{enumerate}

If any of the above conditions hold, then $H:=inc(0)\cup \{0\}$ is a normal subgroup of $G$, the quotient group $G/H$ is totally ordered and the order on $G$ is the lexicographical sum of copies of $H$ indexed by $G/H$.
\end{proposition}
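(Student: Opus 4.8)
The plan is to prove the cycle of implications $(i)\Rightarrow(ii)\Rightarrow(iii)\Rightarrow(i)$ and then establish the final assertions. Throughout, write $inc(0)$ for the set of elements incomparable to $0$ and $H:=inc(0)\cup\{0\}$.

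\textbf{The equivalences.} For $(i)\Rightarrow(ii)$: if the order of $G$ is a weak order, then by definition it embeds no $1\oplus 2$, so the relation ``$x$ incomparable to $y$ or $x=y$'' is an equivalence relation on $X$. Since $0$ is in this relation with every element of $inc(0)$, transitivity forces any two elements of $inc(0)$ to be in the relation with each other; as distinct comparable elements cannot be, any two distinct elements of $inc(0)$ are incomparable, i.e.\ $inc(0)$ is an antichain. For $(ii)\Rightarrow(iii)$: using compatibility in the form of Equation~(\ref{equ:preserve}), $x\in inc(0)$ iff $0\nsim x$ iff $0\nsim -x$, so $-x\in inc(0)$; and if $x,y\in inc(0)$ one must show $x+y\in H$. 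Suppose $x+y$ were comparable to $0$, say $0< x+y$ (the case $x+y<0$ is symmetric, or follows by applying the argument to $-y,-x$). Then, since translations preserve the order, $-y < x$; but also $x\nsim 0$ and, translating, $-y\nsim 0$ means $-y$ is incomparable to $0$, while $x$ is incomparable to $0$. Now $x$ and $-y$ are two elements of $inc(0)$ which, by (ii), form an antichain, so $x\nsim -y$, contradicting $-y<x$. Hence $x+y\in H$, and $H$ is a subgroup. For $(iii)\Rightarrow(i)$: suppose $H$ is a subgroup but the order is not a weak order, so there is a copy of $1\oplus 2$, i.e.\ elements $a$ and $b_0<b_1$ with $a$ incomparable to both. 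By translating by $-a$ we may assume $a=0$, so $b_0,b_1\in inc(0)\subseteq H$; then $b_1-b_0\in H$, i.e.\ $b_1-b_0$ is either $0$ or incomparable to $0$. But $b_0<b_1$ gives, after translating by $-b_0$, that $0<b_1-b_0$, so $b_1-b_0$ is comparable to $0$ and nonzero --- a contradiction. Hence no $1\oplus 2$ embeds and the order is a weak order.

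\textbf{The final assertions.} Assume the equivalent conditions hold. Normality of $H$: this is essentially already recorded, since $H=\overline{0}$ is the $\equiv$-class of $0$ for the associated equivalence relation of a compatible quasi-order and is normal by Lemma~\ref{pred=suc}(a); alternatively, argue directly that $-g+H+g\subseteq H$ using Equation~(\ref{equ:preserve}) and the translation-invariance of incomparability. That $G/H$ is totally ordered: the quotient order is well-defined because $H$ is autonomous (its elements are pairwise indistinguishable with respect to comparisons with outside elements, by (ii) together with the definition of $inc(0)$), and two cosets $x+H$, $y+H$ are comparable because $x\nsim y$ would force $x-y$ incomparable to $0$, hence $x-y\in H$, i.e.\ $x+H=y+H$; so distinct cosets are comparable and $G/H$ is a chain. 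Finally, since $H$ is autonomous and every $\equiv$-class is $H$-coset-shaped, $(X,\leq)$ is the lexicographical sum of copies of $H$ indexed by the chain $G/H$: the component over the coset $x+H$ is the order induced on $x+H$, which is a translate of the order induced on $H=inc(0)\cup\{0\}$, hence an antichain, and comparisons between distinct cosets are governed by the total order of $G/H$ by the definition of the lexicographical sum.

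\textbf{Main obstacle.} The only genuinely delicate point is the argument in $(ii)\Rightarrow(iii)$ that the product of two elements of $inc(0)$ stays in $H$: one must carefully feed the translated incomparabilities into the hypothesis that $inc(0)$ is an antichain and handle the two sign cases (which reduce to one another by passing to inverses, using that $inc(0)$ is closed under negation). Everything else is bookkeeping with Equation~(\ref{equ:preserve}) and the elementary facts about autonomous sets and lexicographical sums recalled in the Prerequisite section.
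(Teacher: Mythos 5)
Your proof is correct and follows essentially the same route as the paper's: translation invariance of comparability drives the three equivalences, and the final assertions come from $H$ being a normal subgroup that is convex and autonomous, so that Lemma~\ref{convex} applies. Two small repairs are needed. First, your parenthetical claim that $H=\overline{0}$, so that normality follows from Lemma~\ref{pred=suc}(a), is false: since $\leq$ is an order (antisymmetric), the $\equiv$-class $\overline{0}$ is just $\{0\}$, not $inc(0)\cup\{0\}$. The direct argument you offer as an alternative --- $x$ is incomparable to $0$ iff $g+x$ is incomparable to $g$ iff $g+x-g$ is incomparable to $0$ --- is the correct one and is exactly what the paper uses. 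Second, the autonomy of $H$ is asserted ("pairwise indistinguishable with respect to comparisons with outside elements, by (ii)") rather than proved; it does not follow from (ii) alone but needs the subgroup property: if $x\notin H$, $a,b\in H$ and $x<a$, then $x-b<a-b$ with $a-b\in H$ and $x-b\notin H$ (else $x\in H$), so $x$ is comparable to $b$; and $b\leq x$ is excluded since it would force $b<x<a$, hence $b<a$ inside the antichain $H$. With these two points filled in, the argument is complete and matches the paper's, the only cosmetic difference being that you prove the cycle $(i)\Rightarrow(ii)\Rightarrow(iii)\Rightarrow(i)$ and handle closure under addition by a sign case analysis where the paper observes directly that $a,b\in H$ incomparable or equal forces $a-b$ incomparable or equal to $0$.
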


A \emph{torsion} element of a group $G$ is any element $x$ such that $nx=0$ for some positive integer $n$. We recall that if $G$ is abelian, the set $T(G)$ of torsion element is a subgroup and  the quotient $G/T(G)$ is totally orderable.

\begin{corollary}\label{cor:existenceweak}A group   can be equipped with a weak order distinct from the equality relation (hence with a nontrivial semiorder) iff it admits  a totally ordered quotient by some proper subgroup. If $G$ is abelian, these  conditions amount to the fact that the subgroup $T(G)$ of  torsion elements  is a proper subgroup.
\end{corollary}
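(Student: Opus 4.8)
\textbf{Proof plan for Corollary \ref{cor:existenceweak}.}

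The plan is to derive both directions from Proposition \ref{claim:weakorder}, together with the standard torsion facts recalled just above the statement. First I would treat the general (not necessarily abelian) equivalence. For the forward direction, suppose $G$ carries a weak order $\leq$ distinct from equality. By Proposition \ref{claim:weakorder} the set $H:=inc(0)\cup\{0\}$ is a normal subgroup of $G$ and the quotient $G/H$ is totally ordered. It remains to check that $H$ is a \emph{proper} subgroup: if $H=G$ then every nonzero element is incomparable to $0$, which by compatibility (Equation (\ref{equ:preserve})) forces $\leq$ to be the equality relation, contradicting our hypothesis. Hence $H\subsetneq G$ and we have the desired totally ordered proper quotient. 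Conversely, suppose $G$ has a normal proper subgroup $H$ with $G/H$ totally ordered, say by $\preceq$. Pull the order back: declare $x\leq y$ iff the cosets satisfy $\overline{x}\prec\overline{y}$, or $x=y$. One checks quickly that this is a compatible order on $G$ (compatibility is inherited from $\preceq$ via the quotient homomorphism), that it is a weak order because $inc(0)$ is exactly $H\setminus\{0\}$, which is an antichain, so condition (ii) of Proposition \ref{claim:weakorder} applies, and that it is distinct from equality precisely because $H$ is proper (so $G/H$ has at least two elements, giving a genuine strict comparison). This is essentially the ``lexicographical sum of copies of $H$ indexed by $G/H$'' construction appearing at the end of Proposition \ref{claim:weakorder}, read backwards.

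For the abelian case I would specialize the above. If $G$ is abelian, every subgroup is normal, and the recalled fact says $T(G)$ is a subgroup with $G/T(G)$ totally orderable. So if $T(G)$ is a proper subgroup, taking $H:=T(G)$ in the converse direction above produces the required weak order distinct from equality. For the other implication, suppose $G$ (abelian) admits a totally ordered quotient $G/H$ by a proper subgroup $H$; I must show $T(G)$ is proper. A totally ordered group has no nontrivial torsion (if $0\prec x$ then $0\prec nx$ for all $n\geq 1$, and symmetrically for $x\prec 0$), so the quotient map $G\to G/H$ kills $T(G)$, i.e. $T(G)\subseteq H\subsetneq G$; hence $T(G)$ is proper. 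Combining, for abelian $G$ the existence of a nontrivial weak order is equivalent to $T(G)\subsetneq G$.

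I expect no serious obstacle here: the only points requiring a line of care are (a) verifying that the pulled-back relation in the converse direction is genuinely antisymmetric and compatible — routine from the quotient being a totally ordered group — and (b) the observation that $H=G$ forces the trivial order, which is immediate from compatibility. The parenthetical remark in the statement, that a nontrivial weak order yields a nontrivial semiorder, is free since every weak order is in particular a semiorder (it embeds neither $1\oplus 2$, hence neither $2\oplus 2$ nor $3\oplus 1$, as these contain $1\oplus 2$), so ``distinct from equality'' transports directly.
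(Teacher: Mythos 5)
Your proposal is correct and follows exactly the route the paper intends: the forward direction and the pull-back construction both rest on Proposition \ref{claim:weakorder}, and the abelian case uses the recalled facts that $G/T(G)$ is totally orderable and that totally ordered groups are torsion-free. The only points needing care (properness of $H=inc(0)\cup\{0\}$, compatibility and antisymmetry of the pulled-back order) are handled adequately, so nothing is missing.
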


A strengthening of this corollary is given in Proposition \ref {prop:existence}.

The case $n=3$ of Theorem \ref {2+2,3+1} corresponds to ordered groups for which the order is a semiorder. We will describe the compatible semiorders and particularly the threshold orders on groups. In the vein of Proposition \ref{claim:weakorder}, we will prove in Section \ref{section:basic} the following result:

\begin{theorem} \label{claim:interval} Let $G$ be an ordered group. Then
the order is a semiorder if and only if $inc(0)$ is bipartite.
Furthermore, the following properties are equivalent:
\begin{enumerate}[{(i)}]
\item $G$ is a threshold group;
\item $inc(0)$ has no isolated elements and is bipartite;
\item $inc(0)$ is prime and bipartite.
 \end{enumerate}

\end{theorem}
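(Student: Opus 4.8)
\textbf{Proof plan for Theorem \ref{claim:interval}.}

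The plan is to prove the first equivalence ($\leq$ is a semiorder $\iff$ $inc(0)$ is bipartite) directly from Corollary \ref{cor:semiorder} together with the translation trick already used in the excerpt: by homogeneity, any copy of $2\oplus 2$ or $3\oplus 1$ in $G$ can be translated so that one of its points is $0$. Concretely, $G$ contains $1\oplus 3$ iff $inc(0)$ contains a $3$-element chain (translate the isolated point to $0$); and by Theorem \ref{2+2,3+1} with $n=3$, $G$ embeds $1\oplus 3$ iff it embeds $2\oplus 2$ and iff it embeds $3\oplus 1$. Since "no $3$-element chain in $inc(0)$'' is exactly "$inc(0)$ is a union of at most two antichains'' (Mirsky's theorem, quoted in the excerpt as \cite{mirsky}), this says $inc(0)$ is bipartite. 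Hence $\leq$ is a semiorder $\iff$ $inc(0)$ contains no $1\oplus 3$ \emph{as a subset with $0$ the isolated point} $\iff$ $inc(0)$ is bipartite.

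Next I would handle the three equivalent conditions for a threshold group. Recall that $G$ is a threshold group iff $\leq$ is a semiorder and $K(G)=\{0\}$, where $K(G)$ is the $\equiv_{pred}$-class of $0$ (Subsection on threshold groups, equation (\ref{KdeG})). So, assuming $inc(0)$ is bipartite (equivalently $\leq$ is a semiorder), the task reduces to showing: $K(G)=\{0\}$ $\iff$ $inc(0)$ has no isolated element $\iff$ $inc(0)$ is prime. For the first of these, I would unwind the definition of $\leq_{pred}$ in the group setting: an element $a\ne 0$ lies in $K(G)$ iff $0\equiv_{pred}a$, i.e. $D(0)=D(a)$ and $U(0)=U(a)$ (using Lemma \ref{pred=suc}(d) so that $\equiv_{pred}=\equiv_{succ}$, the two conditions are genuinely symmetric). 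Translating, $a\in K(G)\setminus\{0\}$ forces $a\in inc(0)$, and then the condition $D(0)=D(a)$, $U(0)=U(a)$ says precisely that for all $z$, $z<0\iff z<a$ and $0<z\iff a<z$; combined with $a\nsim 0$ this is exactly the statement that $a$ is comparable to no element of $inc(0)$, i.e. $a$ is an isolated element of the poset $inc(0)$. (Here one uses that any $z$ comparable to $0$ is, via the criticality encoded in being a semiorder, handled by the $D,U$ condition, while any $z\in inc(0)$ comparable to $a$ would contradict $D(0)=D(a), U(0)=U(a)$ after checking $z$'s relation to $0$.) This gives $K(G)=\{0\}\iff inc(0)$ has no isolated element.

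For the equivalence (ii)$\iff$(iii), i.e. "bipartite without isolated elements'' $\iff$ "bipartite and prime'', I would argue within the bipartite poset $Q:=inc(0)$. A bipartite poset is a height-one poset, and for such posets the autonomous (module) structure is easy to control: a nontrivial autonomous antichain would be a set of $\geq 2$ pairwise incomparable points with identical up-sets and down-sets; using bipartiteness one shows any maximal such module is either all of $Q$ or consists of isolated points, and similarly a nontrivial "complete'' (chain) module in a height-one poset is a $2$-element chain whose two points, being autonomous, would have to relate identically to everything else — which in a bipartite poset quickly degenerates. Invoking the elementary fact quoted in the excerpt (every point lies in a largest autonomous subset which is an antichain or a complete relation, and one of these is a singleton), "prime'' fails iff some point has a nontrivial largest autonomous neighbourhood, and in a bipartite poset one checks this can only happen at an isolated point (a module of isolated points) or when $|Q|\le 2$; the boundary small cases ($|Q|\le 2$) are checked by hand and are consistent. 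I expect this last step — pinning down exactly which modules can occur in a height-one poset and ruling out the non-isolated ones — to be the main obstacle, though it is genuinely elementary; everything else is bookkeeping with translations and the definitions of $\leq_{pred}$, $K(G)$, and autonomy already set up in the prerequisites.
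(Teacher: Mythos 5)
Your first equivalence (semiorder $\iff$ $inc(0)$ bipartite, via the case $n=3$ of Theorem~\ref{2+2,3+1}, translations, and Mirsky) is exactly the paper's argument, and your pivot for (i)$\iff$(ii) --- namely that $G$ is a threshold group iff $\leq$ is a semiorder and $K(G)=\{0\}$, together with the fact that $K(G)\setminus\{0\}$ is precisely the set of isolated vertices of $inc(0)$ --- is also how the paper proceeds (the latter fact is its Lemma~\ref{lem:interval}). One small repair to your sketch of that fact: for the converse direction you do not need any ``criticality encoded in being a semiorder''; isolation of $a$ in $inc(0)$ directly gives $a\leq_{pred}0$ and $0\leq_{succ}a$, and then $\leq_{pred}=\leq_{succ}$ (Lemma~\ref{pred=suc}(d)) yields $a\equiv_{pred}0$. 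So up to and including (i)$\iff$(ii) you are fine.

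The genuine gap is in (ii)$\iff$(iii). You propose to prove ``bipartite with no isolated elements $\Rightarrow$ prime'' as a purely combinatorial statement about the height-one poset $Q:=inc(0)$, asserting in particular that a nontrivial autonomous antichain in a bipartite poset must consist of isolated points. That claim is false: take $Q=\{a,b,c,d\}$ with $a<c$, $a<d$, $b<c$, $b<d$ and no other strict comparabilities. This poset is bipartite, $(2\oplus 2)$-free, self-dual, has connected comparability graph and no isolated points, yet $\{a,b\}$ is a nontrivial autonomous subset. So no amount of bookkeeping inside the abstract poset $inc(0)$ can close this implication; the group structure is indispensable. The paper's proof of (i)$\Rightarrow$(iii) runs as follows: a nontrivial autonomous $A\subseteq inc(0)$ must lie entirely inside one part of the bipartition $\{y:y\leq_{pred}0\}$, $\{y:0\leq_{pred}y\}$ (here connectedness of the comparability graph, hence the absence of isolated points, is used), and then any two $x,x'\in A$ are shown to satisfy $x\equiv_{pred}x'$, whence $x-x'\in K(G)=\{0\}$ and $x=x'$, a contradiction. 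The decisive step --- converting ``$x\equiv_{pred}x'$'' into ``$x=x'$'' by subtracting and invoking $K(G)=\{0\}$ --- has no poset-theoretic analogue and is exactly what your plan lacks. (A secondary caution: your direction ``prime $\Rightarrow$ no isolated elements'' also degenerates for very small $inc(0)$, e.g.\ a two-element antichain is prime under the paper's definition; the intended chain of implications is (i)$\Rightarrow$(iii)$\Rightarrow$(ii)$\Rightarrow$(i), with the group doing the work in the first arrow.)
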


A threshold order is a weak order if and only it is a total order. This particularly applies to ordered groups. Indeed, $G:=(X, +, \leq)$ is a threshold group and $(X, \leq)$ is a weak order if and only if $\leq$ is total (see Proposition \ref{claim:weakorder}).

For $n>3$, a particular aspect of Problem \ref{prob1} is: \emph{which posets necessarily embed into ordered groups $G$ such that $inc(0)$ is the union of $n-1$ antichains}?

This leads us to introduce in Section \ref{section:comparison} a quasi-order on the class of posets which extends the embeddability relation and to describe some of the equivalence classes associated to this quasi-order.

\subsection{Another description of  compatible semiorders and threshold orders}\label{subsection:threshold}

In the next proposition we characterize compatible semiorders on groups and threshold groups in terms of an auxiliary total quasi-order and a final segment.

\begin{proposition} \label{prop:thresholdgroup} Let $G:=(X,+,\leq)$ be an ordered group.
\begin{enumerate}
\item The order $\leq$ is a semiorder if and only if there is a compatible total quasi-order $\preceq$ on $G$, a normal final segment $F$ of $(X, \preceq)$ not containing $0$ such that $x<y$ in $G$ if and only if $y-x\in F$.

\item The order $\leq$ is a threshold order if and only if there is a compatible total order $\preceq$ on $G$, a nonempty normal final segment $F$ of $(X, \preceq)$ not containing $0$ such that:
 \begin{enumerate}[{(a)}]
\item $x<y$ in $G$ if and only if $y-x\in F$;
\item $I:= G\setminus (-F\cup F)$ is not a union of cosets of a normal convex subgroup of $(X, +, \preceq)$ distinct from $\{0\}$.\\
\end{enumerate}

When conditions $(a)$ and $(b)$ are satisfied, the auxiliary order $\preceq$ coincides with $\leq_{pred}$.
\end{enumerate}

\end{proposition}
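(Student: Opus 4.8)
The plan is to deduce the statement from Lemma~\ref{pred=suc} (which makes $\leq_{pred}=\leq_{succ}$ a compatible quasi-order), the characterisations of interval orders and semiorders through $\leq_{pred}$ (Lemmas~\ref{intervalorder} and~\ref{critical}), Corollary~\ref{cor:semiorder}, and one elementary computation. Notice first that condition $(a)$ forces $F$: taking $x=0$ gives $F=\{z\in X:0<z\}$, the strict positive cone of $\leq$, and this set is normal because the positive cone of an ordered group is stable under conjugation while conjugation fixes $0$. Secondly, rewriting the clause ``$t<x\Rightarrow t<y$'' by means of $t<x\iff x-t\in F$ and substituting $s:=x-t$ gives the identity $x\leq_{pred}y\iff (y-x)+F\subseteq F$. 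Together with compatibility of $\preceq$ and the hypothesis that $F$ is a $\preceq$-final segment, this identity yields $x\preceq y\Rightarrow x\leq_{pred}y$: if $0\preceq y-x$ then $s\preceq (y-x)+s$ for all $s\in F$, hence $(y-x)+s\in F$. So whenever such a $\preceq$ exists, $\preceq\subseteq\leq_{pred}$.

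For $(1)$: the implication just proved shows that if a compatible total quasi-order $\preceq$ with the stated properties exists, then $\leq_{pred}$ contains the total relation $\preceq$, hence is total, hence (Lemma~\ref{intervalorder} and Corollary~\ref{cor:semiorder}) $\leq$ is a semiorder. Conversely, if $\leq$ is a semiorder, put $\preceq:=\leq_{pred}$ and let $F$ be the strict positive cone; then $\preceq$ is a compatible total quasi-order (Lemma~\ref{pred=suc}, Lemma~\ref{intervalorder}, Corollary~\ref{cor:semiorder}), $F$ is a $\preceq$-final segment since $0<z$ and $z\leq_{pred}w$ give $0<w$ (take $t=0$ in the definition of $\leq_{pred}$), and $0\notin F$ and $(a)$ are clear.

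For $(2)$: from the identity above, $z\equiv_{pred}0$ iff $z+F\subseteq F$ and $-z+F\subseteq F$, i.e.\ iff $z+F=F$; thus $K(G)=\mathrm{Stab}(F)$, which is a normal subgroup by (\ref{KdeG}), and since normality of $F$ gives $x+F=F+x$ it also stabilises $-F$ and hence $I=X\setminus(F\cup -F)$, so $I$ is a union of $K(G)$-cosets. Moreover $K(G)$ is $\preceq$-convex: if $k\preceq z\preceq k'$ with $k,k'\in K(G)$ then $0\leq_{pred}z\leq_{pred}0$ (using $\preceq\subseteq\leq_{pred}$), so $z\in K(G)$. Now recall that the order of an ordered group is a threshold order iff it is a semiorder with $K(G)=\{0\}$. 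Hence if $\leq$ is a threshold order we again take $\preceq:=\leq_{pred}$ and $F$ the strict positive cone: $(a)$ holds as before, and $(b)$ holds because every normal $\preceq$-convex subgroup $H$ of which $I$ is a union of cosets satisfies $H\subseteq\mathrm{Stab}(F)=K(G)=\{0\}$ (the crux, below). Conversely, from $\preceq,F$ satisfying $(a),(b)$: by $(1)$, $\leq$ is a semiorder, so $\leq_{pred}$ is a total quasi-order; if $K(G)\neq\{0\}$ it would be a nontrivial normal $\preceq$-convex subgroup of which $I$ is a union of cosets, contradicting $(b)$; so $K(G)=\{0\}$, whence $\leq_{pred}$ is a total order and $\leq$ a threshold order. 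Finally $\preceq\subseteq\leq_{pred}$ between two total orders forces $\preceq=\leq_{pred}$.

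The step I expect to need the most care is the crux: \emph{if $H$ is a normal $\preceq$-convex subgroup and $I$ is a union of $H$-cosets, then $H\subseteq\mathrm{Stab}(F)$}. Since $0\in I$ we get $H\subseteq I$, so $H$ is disjoint from $F$ and from $-F$; using that $F$ lies in the $\preceq$-positive cone (because $0<f$ implies $0<_{pred}f$ and $\preceq\subseteq\leq_{pred}$), $\preceq$-convexity of $H$ then forces $w\prec h\prec f$ for all $w\in -F$, $h\in H$, $f\in F$. Now for $h\in H$ and $f\in F$ the coset $f+H$ lies in $X\setminus I=F\cup(-F)$; but $f+h\succ 0$ since $f\succ -h\in H$, while $-F$ lies in the $\preceq$-negative cone, so $f+h\in F$. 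Applying this to $h$ and to $-h$ gives $F+h=F$, i.e.\ $h\in\mathrm{Stab}(F)$. The remaining points are routine bookkeeping: the left/right translate subtleties in the nonabelian case are absorbed by the identity $x+F=F+x$ for normal $F$, and the trivial group (where no nonempty final segment missing $0$ exists) is excluded by the nonemptiness of $F$, which holds automatically once $G$ is nontrivial and $\leq$ a threshold order.
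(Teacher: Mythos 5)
Your proof is correct and follows essentially the same route as the paper: in both directions one takes $\preceq:=\leq_{pred}$ with $F$ the strict positive cone, shows conversely that $\preceq\subseteq\leq_{pred}$, and settles condition $(b)$ by identifying the cosets of $K(G)$ inside $I$. Your packaging of the argument through the identity $x\leq_{pred}y\iff (y-x)+F\subseteq F$ and the identification $K(G)=\mathrm{Stab}(F)$ is a clean reorganization of the same computations the paper performs directly on the definitions.
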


This proposition yields another proof of Proposition \ref{center}. Indeed, let $\alpha$ be a positive element of a totally ordered group $G=(X, +, \preceq)$. The sets
 $F_{\alpha}:= \{ x\in X: \alpha \preceq x \}$ and $F_{\check{\alpha}}:=\{ x\in X: \alpha \prec x \}$  are final segments of positive elements of $G$. If $\alpha\in Z(G)$ then these final segments are normal subsets of $G$ (see Item (3)  of Subsection \ref{normal} if needed). Hence, from Item(1) of Proposition \ref{prop:thresholdgroup}, $\leq_{\alpha}$ and $\leq_{\check{\alpha}}$ are two compatible semiorders. In order to see that they are thresholds orders,  let $I_\alpha:=X\setminus (-F_{\alpha}\cup F_{\alpha})$ and $I_{\check{\alpha}}:=G\setminus (-F_{\check{\alpha}}\cup F_{\check{\alpha}})$.
Note that $I_\alpha=\{v\in X : -\alpha \prec v \prec \alpha\}$ and $I_{\check{\alpha}}=\{v\in X : -\alpha \preceq v \preceq \alpha\}$. We only need to check that $I_\alpha$ and $I_{\check{\alpha}}$ satisfy condition $(b)$ of Proposition \ref{prop:thresholdgroup}. The proofs being the same, let $I\in \{I_\alpha, I_{\check{\alpha}}\}$ and $K$ be a normal convex subgroup of $G= (X, +, \preceq)$ included into $I$. We claim that if $K\not =\{0\}$ then $I$ cannot be a union of cosets of $K$. Let $K+\alpha$ be the coset
containing $\alpha$. Since $K$ is totally ordered, it is torsion free. Being distinct from $\{0\}$, it has no least and largest element. Hence $K+\alpha$ has no least and largest element and therefore it meets both $I$ and $F$ and our claim follows.\hfill $\Box$

 The fact that a group $G$ admits a compatible total order imposes that $G$ be torsion-free (hence, nontrivial subgroups are infinite). Since every abelian and torsion-free group can be totally ordered \cite{levi1} we infer that:

\begin{corollary}\label{cor:threshold-abelian}
Every torsion-free abelian group can be equipped with a compatible threshold order which is not total.
\end{corollary}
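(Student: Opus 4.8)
The statement to prove is Corollary~\ref{cor:threshold-abelian}: every torsion-free abelian group $G$ can be equipped with a compatible threshold order which is not total. The plan is to invoke Proposition~\ref{center} with a carefully chosen ambient total order and threshold $\alpha$, and then check that the resulting threshold order is genuinely not total. First I would recall, as the corollary's preamble already states, Levi's theorem \cite{levi1}: every torsion-free abelian group admits a compatible total order $\preceq$. Fix such a $\preceq$, making $G=(X,+,\preceq)$ a totally ordered abelian group. Since $G$ is abelian, $Z(G)=G$, so \emph{every} element of $G$ lies in the center and the centrality hypothesis of Proposition~\ref{center} is automatic.

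\textbf{Choosing the threshold.} If $G=\{0\}$ the claim is vacuous (the equality relation is a total threshold order, and there is nothing to make non-total, but one can simply exclude the trivial group or note it has no non-total order at all — in fact the only order is equality, which \emph{is} total; so implicitly $G$ is assumed nontrivial). For $G\neq\{0\}$, pick any strictly positive element $\alpha\in X$, i.e. $0\prec\alpha$ — such an element exists because $G$ is nontrivial and $\preceq$ is total, so either some $g$ or its inverse $-g$ is positive. By Proposition~\ref{center}, $G_\alpha:=(X,+,\leq_\alpha)$ with $x\leq_\alpha y$ iff $x=y$ or $\alpha\preceq y-x$ is a threshold group; in particular $\leq_\alpha$ is a compatible threshold order on $G$.

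\textbf{Non-totality.} It remains to exhibit two elements incomparable with respect to $\leq_\alpha$. Since $G$ is torsion-free, $\alpha$ has infinite order, so in particular $0\prec\alpha$ forces $0\prec$ "something strictly between" in the following weak sense: I claim $0$ and $\alpha$ are themselves $\leq_\alpha$-incomparable... no — $\alpha\preceq\alpha-0$, so $0\leq_\alpha\alpha$. Instead, consider an element $\beta$ with $0\prec\beta\prec\alpha$. Such a $\beta$ exists: take $\beta$ to be... here one must be slightly careful, since a totally ordered abelian group need not be densely ordered (e.g. $\mathbb Z$). The clean fix is to choose $\alpha$ to be $2\gamma$ for some positive $\gamma$ (possible since $G$ is torsion-free and nontrivial: pick any positive $\gamma$ and set $\alpha:=\gamma+\gamma$, which is positive and, being $2\gamma$ with $\gamma\neq 0$, is nonzero). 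Then $0\prec\gamma\prec\alpha$ and $0\prec\alpha-\gamma=\gamma\prec\alpha$, so neither $\alpha\preceq\gamma-0$ nor $\alpha\preceq 0-\gamma$ holds (the latter since $0-\gamma=-\gamma\prec 0\prec\alpha$), hence $0$ and $\gamma$ are incomparable in $\leq_\alpha$. Therefore $\leq_\alpha$ is not total, completing the proof.

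\textbf{Main obstacle.} The only real subtlety — and it is minor — is the non-totality step: one cannot assume the ambient total order is dense, so one must manufacture an element strictly between $0$ and the threshold. Using $\alpha=2\gamma$ sidesteps this, exploiting torsion-freeness precisely where the hypothesis is needed. Everything else is a direct citation of Proposition~\ref{center} (whose proof is in turn subsumed by Proposition~\ref{prop:thresholdgroup}) together with Levi's theorem, so no further calculation is required.
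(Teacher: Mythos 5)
Your proof is correct and follows essentially the same route as the paper: Levi's theorem to totally order $G$, then Proposition~\ref{center} (equivalently, Proposition~\ref{prop:thresholdgroup}) to produce the threshold order, with the choice $\alpha=2\gamma$ correctly handling the one point the paper leaves implicit, namely that the threshold must be chosen so that $inc(0)\neq\varnothing$ (e.g.\ $\alpha$ the least positive element of $\ZZ$ would give back the total order). One could alternatively take the strict variant $\leq_{\check{\alpha}}$ for any positive $\alpha$, which makes $0$ and $\alpha$ incomparable automatically, but your argument is equally valid.
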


\begin{figure}[ht]
\begin{center}
 \leavevmode \epsfxsize=3.5in \epsfbox{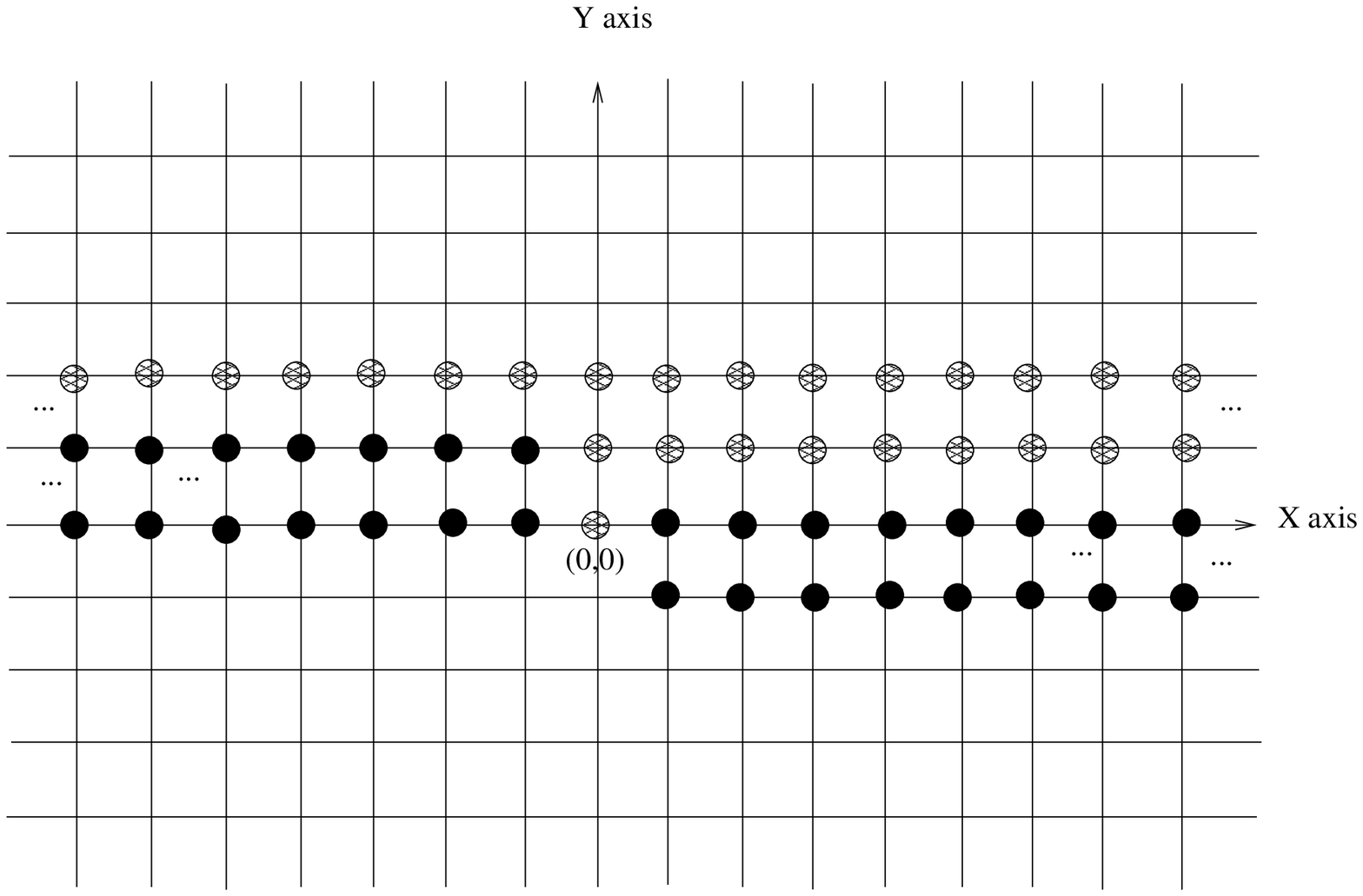}
\end{center}
\caption{An example of a threshold order on $\ZZ^2$. The set of bold points represents $inc((0,0))$. The set of hatched points and every other point above represents the positive cone.}
\label{grid}
\end{figure}

Different  total orders which are compatible with the group operation will lead to different semiorders. Compatible total orders on torsion free abelian groups and notably on   groups of the form $\ZZ^n$ have been  described (see Chapter 6 of \cite {kopytov} and \cite{robbiano}).
The case of the additive group of real numbers is interesting for our purpose. According to Proposition \ref{prop:thresholdgroup}, if $\leq$ is a semiorder such that $\leq_{pred}$ coincide with the order $\preceq$ on the reals then $\leq$ is one of the orders  $\leq_{\alpha}$ or     $\leq_{\check\alpha}$ (for some real $\alpha$) defined in Proposition \ref{center}. Hence $\leq$ is a threshold order. But this does not imply that every semiorder on $\RR$ has this form, because the semiorder depends upon the compatible total  order on the reals and there are plenty of those  (in fact there are $2^{\mathfrak c}$ many, where ${\mathfrak c}=2^{\aleph_0}$). For example, viewing $\RR$ as a vector space over the field $\QQ$ of rational numbers, let $H$ be a  basis of $\RR$ (the existence of such  a base, called a Hamel base, requires some form of  the axiom of choice) and for every real $x$ let $supp(x)$, the \emph{support} of $x$, be the set of members of $B$ which appear in a decomposition of $x$ over $\QQ$. Put   a  total order $\preceq$ on $H$ and extend it to a total order on the reals by defining the positive cone $C_H:= \{0\}\cup \{ x\in \RR\setminus \{0\}: lead(x)>0\}$, where $lead(x)$, the \emph{leading coefficient} of $x$, is the coefficient of the largest element of $supp(x)$ which appears in the decomposition of $x$.  According to Proposition \ref{prop:thresholdgroup},  any final segment in the positive cone will lead to a semiorder and, in some instances,  to a threshold order. Some continuity or density conditions on a compatible  total order on the reals  in order to recover the ordinary order are necessary.
This  explains somehow why the representation of infinite semiorders in the reals was a difficult problem \cite{candeal1, candeal}.

There are $2^{\aleph_0}$ compatible total orders on  the additive group $\ZZ^2$. Here is an example of a threshold group on $\ZZ^2$. The positive cone is the set $C:= \{(0,0)\} \cup \{(n,m): n\geq 0, m\geq 1, \; \text{or}\;  n<0\;  \text{and}\;  m\geq 2\}$. The idea is the same as above: set $B:= \{(1,0), (0,1)\}$ then every element of $\ZZ$ is a combination of members of $B$ with integer coefficients. Set $(1,0)\prec (0,1)$; the positive  cone of the extension $\preceq$  is $C_B:= \{0\} \cup \{(n,m): m>0 \text{ or }  m=0 \text{ and } n>0\}$ and the order is a sum of copies of $\ZZ$ indexed by $\ZZ$. Set $\alpha:= (1,1)$  and  $\preceq_{\alpha}$ be the corresponding threshold order. See Figure \ref{grid}.\\

For nonabelian groups this conclusion of Corollary \ref{cor:threshold-abelian} is no longer valid. In \cite{clifford51}, Clifford exhibits an example of a nonabelian and nonfinitely generated totally ordered order-simple group (an ordered group is \emph{order-simple} if every normal and convex subgroup is trivial). We prove that in Clifford's example the set of positive elements contains no proper normal final segment (see Proposition \ref{clifford} in Subsection \ref{subsection-clifford}). Hence, the only compatible semiorder on this group is either the total order or the equality relation.

According to $(b)$ of Proposition \ref{prop:thresholdgroup}, if $G$ is a threshold group obtained via a total order $\preceq$ and a nonempty normal final segment $F$ of $\{x\in G: 0\prec x\}$, the set $I:=G\setminus (-F\cup F)$ cannot be a subgroup distinct from $\{0\}$.

In Section \ref{section:fingen}, we prove:

\begin{proposition}\label{fingen} Every finitely generated totally ordered group $G$ admits a proper normal final segment $F$ such that $I:=G\setminus (-F\cup F)$ is not a group.
\end{proposition}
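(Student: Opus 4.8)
The plan is to reduce, by passing to a well-chosen quotient, to the case of a finitely generated abelian Archimedean group, where an explicit final segment does the job; the one delicate point is producing that quotient.

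\emph{The key step} is the claim that a finitely generated totally ordered group $G\neq\{0\}$ has a largest proper convex subgroup $U$, and that $U$ is normal. (If $G=\{0\}$ there is nothing to prove.) I would invoke the standard facts that the convex subgroups of a totally ordered group form a chain under inclusion and that $\{0\}$ is among them (see e.g. \cite{fuchsbook,glassbook}). Let $U$ be the union of all proper convex subgroups of $G$; as a union of a chain of convex subgroups it is again a convex subgroup. It must be proper: if $U=G$, then each of the finitely many generators $g_{1},\dots,g_{n}$ of $G$ lies in some proper convex subgroup $C_{i}$, hence all of them lie in the largest $C$ among $C_{1},\dots,C_{n}$, so $G=\langle g_{1},\dots,g_{n}\rangle\subseteq C$, contradicting that $C$ is proper. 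Thus $U$ is the largest proper convex subgroup, and since conjugation by any $g\in G$ is an order-automorphism of $G$ (conjugates of positive elements being positive), the set $g^{-1}Ug$ is again a proper convex subgroup, hence contained in $U$; so $U$ is normal. I expect this to be the only real obstacle, and finite generation is essential here: this is precisely the property that fails for Clifford's non-finitely-generated order-simple group, cf. Proposition~\ref{clifford}.

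Next, let $H:=G/U$ with the quotient order (well defined and total because $U$ is normal and convex), and let $\pi\colon G\to H$ be the canonical surjection. Then $H$ is finitely generated and nontrivial, and its convex subgroups correspond to the convex subgroups of $G$ containing $U$, namely $U$ and $G$ only; hence $H$ has no nontrivial proper convex subgroup, i.e. $H$ is Archimedean. By H\"older's theorem (\cite{fuchsbook,glassbook}), $H$ is order-isomorphic to a subgroup of $(\RR,+,\leq)$; being finitely generated it is free abelian of some finite rank $k\geq 1$, and I fix an order-embedding $\iota\colon H\hookrightarrow\RR$. In $H$ I would then choose a proper final segment $\bar F$ of the strictly positive elements with $I_{H}:=H\setminus(-\bar F\cup\bar F)$ not a subgroup: if $\iota(H)$ is discrete, $H$ is infinite cyclic with least positive element $u$, and I take $\bar F:=\{x\in H: 2u\preceq x\}$, so that $I_{H}=\{-u,0,u\}$, which is not closed under addition; if $\iota(H)$ is dense in $\RR$, I pick any $\alpha\succ 0$ in $H$ and, by density, some $\beta\in H$ with $\frac12\iota(\alpha)<\iota(\beta)<\iota(\alpha)$, so that $0\prec\beta\prec\alpha\prec\beta+\beta$, and take $\bar F:=\{x\in H:\alpha\preceq x\}$, for which $\beta\in I_{H}$ but $\beta+\beta\notin I_{H}$. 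In both cases $\bar F$ is nonempty and misses a positive element (so is proper), and $I_{H}$ is not a subgroup.

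Finally I would pull back: put $F:=\pi^{-1}(\bar F)$. Then $F$ is a final segment of $G$ (since $\pi$ is order-preserving and $\bar F$ is upward closed), $F$ is normal (the preimage of the normal set $\bar F$ under the homomorphism $\pi$), $F$ consists of strictly positive elements of $G$ (as $\bar x\succ\bar 0$ forces $x\succ 0$), and $F$ is proper (lift a strictly positive element of $H$ lying outside $\bar F$). Moreover $-F\cup F=\pi^{-1}(-\bar F\cup\bar F)$, so $I:=G\setminus(-F\cup F)=\pi^{-1}(I_{H})$; were $I$ a subgroup of $G$, then $I_{H}=\pi(I)$ would be a subgroup of $H$, a contradiction. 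Hence $F$ is a proper normal final segment of $G$ with $I$ not a group. (By Proposition~\ref{prop:thresholdgroup}(1), taking for auxiliary total quasi-order the order of $G$ itself, this $F$ also exhibits on $G$ a compatible semiorder distinct from the order of $G$.)
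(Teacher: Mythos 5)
Your proof is correct and follows essentially the same route as the paper: finite generation yields a largest proper convex normal subgroup $U$ with $G/U$ archimedean (hence abelian by H\"older), and your pulled-back final segment $F=\pi^{-1}(\bar F)$ is exactly the set $F=\{g : g+U \succ \alpha+U\}$ used in the paper's Theorems \ref{finalsegment:abelianquotient} and \ref{abelianquotient:nonarchimedean}, with the same witness ($\beta\in I$, $\beta+\beta\notin I$) that $I$ fails to be a subgroup. The only difference is presentational: you handle the archimedean case ($U=\{0\}$) uniformly via the discrete/dense dichotomy in the quotient, where the paper splits into two cases and invokes Corollary \ref{cor:threshold-abelian} for the archimedean one.
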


Two widely studied groups in Group Theory are the Thompson group $\FF$ and the free group $F(S)$ on a set $S$ of generators.

The group $\FF$ was introduced by Richard Thompson in some unpublished handwritten notes in 1965 \cite{thompson} as a possible counterexample to von Neumann conjecture. The group $\FF$ has a collection of unusual properties which have made it a counterexample to many general conjectures in group theory. The group $\FF$ is not simple but its derived subgroup $[\FF,\FF]$ is and the quotient of $\FF$ by its derived subgroup is the free abelian group of rank 2. The Thompson group can be generated by two elements and can be induced with a compatible total order \cite{CannonFloydParry, NavasRivas}.

The free group $F(S)$ is a universal group: Every group is isomorphic to a quotient group of some free group. The free group with finitely many generators can also be induced with a compatible total order \cite{magnus}.

A consequence of Proposition \ref{fingen} is this.

\begin{corollary}\label{free-thompson} The Thompson group $\FF$ and the free group on finitely many generators can be equipped with a compatible semiorder which is not a weak order.
\end{corollary}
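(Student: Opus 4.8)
The plan is to derive Corollary \ref{free-thompson} directly from Proposition \ref{fingen} together with the characterization of threshold orders and semiorders established in Proposition \ref{prop:thresholdgroup} and Theorem \ref{claim:interval}. First I would recall that both the Thompson group $\FF$ and the free group $F(S)$ on a finite set $S$ of generators are finitely generated and admit a compatible total order $\preceq$: for $\FF$ this is the content of \cite{CannonFloydParry, NavasRivas}, and for $F(S)$ with $|S|$ finite it is \cite{magnus}. Fix such a group $G$ with such a total order $\preceq$; since $G$ is finitely generated, Proposition \ref{fingen} provides a proper normal final segment $F$ of $(X,\preceq)$ contained in the strictly positive cone $\{x\in G: 0\prec x\}$ such that $I:=G\setminus(-F\cup F)$ is not a group.

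Next I would invoke Item (1) of Proposition \ref{prop:thresholdgroup}: since $F$ is a normal final segment of the compatible total quasi-order (here, total order) $\preceq$ and $0\notin F$, the relation $x< y$ defined by $y-x\in F$ is a compatible semiorder $\leq$ on $G$. It remains only to check that this semiorder is not a weak order. By Proposition \ref{claim:weakorder}, $\leq$ is a weak order if and only if $inc(0)$ is an antichain, equivalently if and only if $inc(0)\cup\{0\}$ is a subgroup of $G$. But $inc(0)$ is exactly the set of elements incomparable to $0$ under $\leq$, which is precisely $G\setminus(-F\cup F)=I$ minus, or rather together with, the handling of $0$: unravelling the definition, $x\in inc(0)$ means $x\notin F$ and $-x\notin F$ and $x\neq 0$, so $inc(0)\cup\{0\}=I$. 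Since Proposition \ref{fingen} guarantees that $I$ is not a group, $inc(0)\cup\{0\}$ is not a subgroup of $G$, and hence by Proposition \ref{claim:weakorder} the order $\leq$ is not a weak order. This completes the argument.

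The only delicate point is the bookkeeping identifying $inc(0)\cup\{0\}$ with $I$, and making sure that the total order $\preceq$ we start from is genuinely compatible with the group operation in the nonabelian setting — but this is exactly what the cited results \cite{CannonFloydParry, NavasRivas, magnus} assert, and Proposition \ref{prop:thresholdgroup} was stated for not-necessarily-abelian ordered groups, so no extra work is needed. I expect no real obstacle here: the corollary is a straightforward specialization of Proposition \ref{fingen} once one observes that $\FF$ and finitely generated free groups fall under its hypotheses. One could alternatively phrase the conclusion using Theorem \ref{claim:interval} (the semiorder $\leq$ is not a weak order because $inc(0)=I$ is bipartite but has isolated elements only if $I$ were trivial, which it is not), but the route through Proposition \ref{claim:weakorder} is the most economical.
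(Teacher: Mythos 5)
Your proposal is correct and follows exactly the route the paper intends: the corollary is stated as an immediate consequence of Proposition \ref{fingen}, using that $\FF$ and the finitely generated free groups are finitely generated and totally orderable, Item (1) of Proposition \ref{prop:thresholdgroup} (equivalently Lemma \ref{lem:semiorder}) to turn the normal final segment $F$ into a compatible semiorder, and the identification $inc(0)\cup\{0\}=I$ together with Proposition \ref{claim:weakorder} to rule out a weak order. The only cosmetic point is the slightly garbled sentence about ``$I$ minus, or rather together with, the handling of $0$''; the correct statement, which you do reach, is simply that $x\in inc(0)$ iff $x\neq 0$, $x\notin F$ and $-x\notin F$, so $inc(0)\cup\{0\}=G\setminus(-F\cup F)=I$.
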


\begin{problem} Is it true that on every finitely generated totally ordered group there is a compatible threshold order which is not total?
\end{problem}

\subsection{Examples of semiordered groups and threshold groups}

\begin{example}\label{example1}
\emph{A compatible order $\leq$ on $\ZZ$, the additive group of integers, is a semiorder if and only if $C^+_*:=\{x : x>0\}$ is either empty, that is, all elements of $\ZZ$ are pairwise incomparable, or is of the form $a+\NN$ or $-(a+\NN)$ for some $a\geq 1$. If the cone is nonempty the ordered group is a threshold group}.
\begin{proof}Let $\leq$ be a compatible order on $\ZZ$. Then $\leq$ is a suborder of the natural order or of its dual. Without loss of generality, we may suppose that this is a suborder of the natural order, that is, $\varnothing\neq C^+ \subseteq \NN$. Set $a=\min C^+_*$.
\end{proof}
\end{example}

\begin{example}\label{example3} \emph{Let $G$ be an additive subgroup of the reals and let $\alpha \in \RR^+$. Then the ordered groups $G_{\alpha}$ and $G_{\check{\alpha}}$ defined as in Proposition \ref{center} are threshold groups (note that if $\alpha\not \in G$, the two orders coincide).}
\end{example}

In the next examples, we will define the strict order on the groups rather than the order.

\begin{example}\label{example4}\emph{Let $G=(X,+,\leq)$ be a threshold group and let $K$ be any group. The direct product $K\times G$ ordered by \[(a,b)<(a',b') \mbox{ if and only if } b<b'. \] is a semiordered group}.
\begin{proof}Note that for every $b\in X$, the set $K\times \{b\}$ is an antichain and an autonomous set. So that the order on $K\times G$ is the lexicographical sum over $G$ of copies of $K$. Since the order on $G$ is a semiorder so is the order on $K\times G$.
\end{proof}
\end{example}

\begin{example}\label{example5}\emph{If $C$ is a totally ordered group and $G$ is a threshold group, then the direct product $G\times C$ ordered by
\[(a,b)<(a',b') \mbox{ if and only if }\;  b<b' \mbox{ or } (b=b' \mbox{ and } a< a' \; \mbox{in } G) \; \]
is a threshold group}.
\begin{proof}Note that the order on $G\times C$ is a lexicographical sum over $C$ of copies of $G$. The relation $\leq_{pred}$ on $G\times C$ is the lexicographical sum over $C$ of the relation $\leq_{pred}$ on $G$. Since $G$ is a threshold ordered group its corresponding $\leq_{pred}$ is a total order. Since $C$ is totally ordered it follows that the relation $\leq_{pred}$ on $G\times C$ is a total order and hence $G\times C$ is a threshold group.
\end{proof}
\end{example}

\begin{example}\label{example6} \emph{Let $A$ be a totally ordered group, $F$ be a normal nonempty final segment containing possibly $0$ and $G$ be a  threshold group with  attained threshold $\alpha>0$. On the direct product $A\times G$ we define the following relation:
\[(a,b)<_{(F, \alpha)} (a',b') \; \mbox{ iff}\;   b+\alpha < b' \; \mbox{ in}\; G\; \mbox{or}\; (b'= b+\alpha \; \mbox {and} \; a'-a\in F). \]}
\emph {This relation is an order, it defines a compatible semiorder on $A\times G$. If $F$ is not the union of cosets of some nontrivial convex subgroup of $A$ this is a threshold order; in this case, $\leq_{pred}$ on $A\times G$ is the lexicographical sum of the total order on $A$ indexed by the total order $\leq_{pred}$ on $G$, hence is a total order. We denote by $A \odot_{F, \alpha} G$ the direct product $A\times G$ of these two groups, with the order $\leq _{F, \alpha}$}.
\begin{proof}One can easily verify that $\leq _{F, \alpha}$ is antisymmetric (follows from $\alpha>0$) and transitive. The compatibility of $<_{(F, \alpha)}$ follows from the normality of $F$ and our assumption that $G$ is a threshold group. From $\alpha> 0$ we deduce that for every $b\in G$, the order $<_{(F, \alpha)}$ is the equality on $A\times \{b\}$, that is $A\times \{b\}$ is an antichain. Let $b,b'\in G$ be distinct. If $b'-b>\alpha$, then $(a,b)\leq _{F, \alpha} (a',b')$ for all $a,a'\in A$. Else if $b'-b=\alpha$, then the restriction of $\leq _{F, \alpha}$ to $A\times \{b\}\cup A\times \{b'\}$ is bipartite and $(a,b)\leq _{F, \alpha} (a',b')$ if $a'-a\in F$. We claim that
$(a,b){\leq_{(F, \alpha)}}_{pred} (a',b')$ whenever $b+\alpha \leq b'$ or $a\leq a'$ in $A$ and $b=b'$, which is enough to prove that the quasi-order ${\leq_{(F, \alpha)}}_{pred}$  is total, that is, $<_{(F, \alpha)}$ is a (strict) semiorder. Indeed, let $(a'',b'')$ be such that $(a'',b'') <_{(F, \alpha)} (a,b)$. Then $b''+\alpha \leq b$ and hence $b''+\alpha \leq b''+2\alpha \leq b+\alpha\leq b'$. Therefore, if $b''+\alpha < b$ or $b+\alpha < b'$, then $(a'',b'') <_{(F, \alpha)} (a',b')$. Next we assume that $b''+\alpha = b$ and $b+\alpha = b'$. This in turn leads to $b''+2 \alpha=b'$ and in particular $b''+\alpha<b'$ proving again that $(a'',b'') <_{(F, \alpha)} (a',b')$. We now assume that $b=b'$. Clearly if $b''+\alpha<b$, then $(a'',b'') <_{(F, \alpha)} (a',b')$. Else $a-a''\in F$. Now $a'-a''=a'-a+a-a''$ and since $a<a'$ in $A$ by assumption we infer that $a'-a''\in F$ proving that $(a'',b'') <_{(F, \alpha)} (a',b')$.
\end{proof}
\end{example}

\begin{example}\label{example7}\emph{Let $K, A,B,C$ be four groups and $G:= K\times A\times B\times C$ be their direct product. We suppose that $B$ is an additive subgroup of the reals with an attained threshold $\alpha$ as in Example \ref{example3}. We suppose that $A$ and $C$ are totally ordered groups and that $F$ is a nonempty normal final segment of $A$ which is not an union of cosets of some nontrivial convex subgroup of $A$. The order on $B\times C$ is the lexicographical sum over $C$ of copies of the order on $B$. Let $A':=A\odot_F B$ ordered as in Example \ref{example6}. Finally, the order on $K\times A'\times C$ is the lexicographical sum over $C$ of copies of the order on $K\times A'$. Then $A'\times C$ is a threshold group and $G$ is a semiordered group.}
\begin{proof}The order on $G$ is the lexicographical sum over $C$ of copies of the order on $K\times A'$. Hence, in order to prove that the order on $G$ is a semiorder it is enough to prove that the order on $K\times A'$ is a semiorder. Since the order on $K\times A'$ is the lexicographical sum over $A'$ of copies of $K$ and $A'$ is a semiorder we infer that the order on $K\times A'$ is a semiorder. This proves that $G$ is a semiordered group. The fact that $A'\times C$ is a threshold group follows from $A'$ is a threshold group (see example \ref{example6}).
\end{proof}
\end{example}

\subsection{Decomposition of semiordered groups}
In this subsection,  we prove that the examples given above describe the possible  semiorders on  groups, e.g. Theorem \ref{thm:4} shows that the order on a group is a  semiorder iff it can be expressed as a product like in Example \ref{example7}. Of course, the group does not need to decompose as in this example (this depends upon the existence of  direct factors for the subgroups).

For every ordered group $G= (X, +, \leq) $ there are two particular normal convex subgroups that play an important role in the study of its structure. These are $K(G)$, defined in (\ref{KdeG}) of Subsection \ref{subsection:threshold},  and $I(G)(0)$.

To the poset $(X, \leq)$, we associate its  incomparability graph $(X, \nsim)$.
 As any graph, it  decomposes into connected components. We denote by  $I(G)(0)$ the connected component of $0$. This is a convex normal subgroup of $G$.

If the order is a semiorder, another subgroup, $A(G)$,  enters into the picture.

The set
$$A(G):= \{x\in G: \ZZ\, x \; \text{is an antichain in }\; G\}; $$
is also a normal subgroup of $G$.

In Subsection \ref{pfthmthree groups} we prove:

\begin{theorem}\label{thm:three groups}Let $G:= (X, +, \leq)$ be an ordered group. Then:
\begin{enumerate}[{(a)}]
\item $K(G)$ is the largest autonomous subset in $(X,\leq)$ which contains $0$ and is an antichain;
\item $I(G)(0)$ is the least subgroup of $G$ which contains $H=inc(0)\cup \{0\}$; the quotient group $G/I(G)(0)$ is totally ordered and the order on $G$ is isomorphic to the lexicographical sum of the order of $I(G)(0)$ indexed by the chain $G/I(G)(0)$;
\item If the order is a semiorder, $A(G)$ is a convex normal subgroup and is the largest subgroup of $G$ which is an antichain.
\end{enumerate}
\end{theorem}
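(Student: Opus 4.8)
The plan is to prove the three parts largely independently, invoking the structural facts already established: part (a) uses Lemma~\ref{pred=suc} and the general theory of autonomous sets; part (b) is essentially Proposition~\ref{claim:weakorder} combined with the definition of $I(G)(0)$ as a connected component; part (c) uses the semiorder characterization of Theorem~\ref{claim:interval}. Throughout I will use translations freely: since left and right translations preserve $\leq$, a subset $S$ containing $0$ is a subgroup exactly when it is closed under $+$ and inverse, and statements about $0$ transfer to statements about arbitrary points.

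For part (a), recall from Subsection~\ref{subsection:threshold} that $K(G)$ is the equivalence class $\overline 0$ of $0$ under $\equiv_{pred}$, which by Lemma~\ref{pred=suc} is a normal subgroup. First I would check $K(G)$ is an antichain: if $x\equiv_{pred} 0$ and $y\equiv_{pred} 0$ with $x<y$, then since $\leq$ refines $\leq_{pred}$ we get $x<_{pred} y$ (by \eqref{equa:1}), contradicting $x\equiv_{pred} y$ (which follows since both $\equiv_{pred} 0$). Next, $K(G)$ is autonomous: this is immediate from the definition of $\leq_{pred}$ and $\leq_{succ}$ — if $u<x$ for some $x\in K(G)$ and $u\notin K(G)$, then $x\leq_{pred}y$ for every $y\in K(G)$ gives $u<y$; the other three clauses of autonomy are symmetric, using $\leq_{succ}=\leq_{pred}$. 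Finally, maximality: if $Y\ni 0$ is autonomous and an antichain, I claim every $y\in Y$ satisfies $y\equiv_{pred}0$. If $z<0$ then by autonomy (applied with $0,y\in Y$ and $z\notin Y$, noting $z\notin Y$ because $Y$ is an antichain containing $0$) we get $z<y$; symmetrically $0<z\Rightarrow y<z$, so $0\leq_{pred}y$ and $0\leq_{succ}y$, and by symmetry $y\leq_{pred}0$, giving $y\equiv_{pred}0$, i.e. $y\in K(G)$.

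For part (b), the incomparability graph component $I(G)(0)$ of $0$ contains $H=inc(0)\cup\{0\}$, hence contains the subgroup $\langle H\rangle$ it generates. For the reverse inclusion and the structural claims I would show $\langle H\rangle$ is already a convex normal subgroup whose quotient is totally ordered, and that it is a union of incomparability-connected pieces so must equal the whole component; the cleanest route is to verify directly that the order is the lexicographic sum of $\langle H\rangle$ over the (necessarily totally ordered) quotient $G/\langle H\rangle$ — any two elements in distinct cosets are comparable because their difference, lying outside $\langle H\rangle\supseteq inc(0)$, is comparable to $0$ — and then invoke the remark in Subsection on lexicographic sums that components are autonomous, so no edge of $\nsim$ crosses cosets, forcing $I(G)(0)=\langle H\rangle$. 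Here I should be a little careful that $\langle H\rangle$ is genuinely convex; this follows because if $0<u<v$ with $v\in\langle H\rangle$ then $u$ lies in the same coset as $v$ hence in $\langle H\rangle$.

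For part (c), assume $\leq$ is a semiorder. That $A(G):=\{x:\ZZ x\text{ is an antichain}\}$ is a subgroup: closure under inverse is clear; for closure under $+$, given $x,y\in A(G)$ one shows $n(x+y)$ is incomparable to $0$ for all $n\neq 0$ using Theorem~\ref{claim:interval}, namely that $inc(0)$ is bipartite — here is the main obstacle, since one must rule out $n(x+y)>0$ (say) by exhibiting a forbidden $2\oplus2$ or $3\oplus1$ built from translates of $x$, $y$, and the hypothetical positive element, leveraging that all multiples of $x$ and of $y$ already sit in the bipartite set $inc(0)$. Convexity and normality of $A(G)$ then follow: normality because the property "$\ZZ x$ is an antichain" is conjugation-invariant (conjugation is an order-automorphism), and convexity because $A(G)\subseteq inc(0)\cup\{0\}=H$ sits inside $I(G)(0)$ where, after reducing mod $A(G)$, one checks no element strictly between two members of $A(G)$ can be comparable to $0$. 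Largest: if $B$ is any subgroup that is an antichain, then for $x\in B$ all of $\ZZ x\subseteq B$ is an antichain, so $x\in A(G)$; hence $B\subseteq A(G)$.
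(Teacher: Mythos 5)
Your part (a) is correct and follows essentially the paper's own argument (Lemma \ref{lem:interval}). The genuine gap is in part (c), at exactly the point you flag as ``the main obstacle'': you never actually prove that $A(G)$ is closed under addition, and the route you sketch --- ruling out $n(x+y)>0$ by exhibiting a forbidden $2\oplus 2$ or $3\oplus 1$ among translates of $x$, $y$ and the hypothetical positive element --- is not carried out and is not obviously workable (knowing only that $\ZZ x$ and $\ZZ y$ sit in the bipartite set $inc(0)$ does not readily produce such a configuration from a single comparability $0<n(x+y)$). The missing idea, which the paper uses in Lemma \ref{lem2:4}, is order-theoretic rather than combinatorial: since the order is a semiorder, $\leq_{pred}$ is a \emph{total} quasi-order and $inc(0)\cup\{0\}$ is a $\leq_{pred}$-convex set; hence for $x\in A(G)$ the least $\leq_{pred}$-convex subgroup $\widetilde{\ZZ x}$ containing $\ZZ x$ is contained in $inc(0)\cup\{0\}$, and since convex subgroups of a totally quasi-ordered group form a chain under inclusion (cf. Lemma \ref{largestconvex}), $A(G)$ is the union of a chain of subgroups, hence a subgroup; any subgroup inside $inc(0)\cup\{0\}$ is automatically an antichain (Lemma \ref{lem5:0}). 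Once $A(G)$ is an antichain, its convexity in $(X,\leq)$ is trivial, so your ``reduce mod $A(G)$'' discussion is unnecessary.

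In part (b) your observation that no incomparability edge joins distinct cosets of $\langle H\rangle$ is correct and gives $I(G)(0)\subseteq\langle H\rangle$, but two steps are shaky. First, ``$I(G)(0)$ contains $H$, hence contains $\langle H\rangle$'' presupposes that $I(G)(0)$ is a subgroup, which is not yet known at that point; you need either the induction on distance in the incomparability graph (Lemma \ref{lem4:0}) or the remark that $0,\,h_1,\,h_1+h_2,\dots$ is a path in the incomparability graph when each $h_i\in inc(0)$. Second, your justification of convexity of $\langle H\rangle$ (``$u$ lies in the same coset as $v$'') is circular: that an element between $0$ and $v\in\langle H\rangle$ must lie in the coset of $v$ is precisely what convexity, equivalently antisymmetry of the quotient relation, would give you. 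The clean source of convexity is the folklore Lemma \ref{interval}: every poset is the lexicographical sum of the connected components of its incomparability graph indexed by a chain, so $I(G)(0)$ is convex and autonomous for free; combined with the compatibility of the connectedness relation this yields all of (b), as in Lemma \ref{lem5:4}.
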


We should mention that $K(G)\subseteq A(G)\subseteq I(G)(0)$. Moreover, if $G$ has no torsion element, then $A(G)$ is an infinite antichain or is $\{0\}$. Furthermore, all these sets are convex subsets (this is obvious for $K(G)$ an $A(G)$ since they are antichains, for the convexity of $I(G)(0)$ see Lemma \ref{lem5:4}) with respect to the order on $G$ and hence the quotient groups $G/K(G)$, $G/A(G)$, $G/I(G)(0)$  can be equipped with a partial order (see Lemma \ref{convex}). In fact, the groups $G/A(G)$, $G/I(G)(0)$ are totally ordered. Moreover, the order on $G$ is the lexicographical sum of the cosets of $I(G)(0)$ indexed by the chain $G/I(G)(0)$.  The order structure of $I(G)(0)$ is rather simple if $\leq_{pred}$ is a total order which is not dense. Indeed, in this case the set of positive elements (with respect to $\leq_{pred}$) has a smallest element $a$ and hence $I(G)(0)= \ZZ a$ and there is  some threshold $\alpha \geq a$.

The groups $K(G)$, $A(G)$ and $I(G)(0)$ play an important role in the study of the structure of partially ordered groups whose order is a semiorder. Indeed, it  follows from Theorem \ref{thm:4} below that if the order on a group $G$ is a semiorder and if the groups $K(G)$, $A(G)$ and $I(G)(0)$ are direct factors in $A(G)$, $I(G)(0)$ and $G$ respectively, then $G$ has a decomposition as in Example \ref{example7} above. We note that in Example \ref{example7}, $K(G)= K\times\{(0,0,0)\}$, $A(G)= K\times A\times \{(0,0)\}$, $I(G)(0)= K\times A\times B\times \{0\}$.

The next three theorems give insights on the structure of ordered groups whose order is a semiorder.

\begin{theorem}\label{thm:3} The following properties for an ordered group $G$ are equivalent.

 \begin{enumerate}[(i)]
\item The order on $G$ is a semiorder;
\item $I(G)(0)/K(G)$ is a threshold group;
\item $G/K(G)$ is a threshold group.
\end{enumerate}
\end{theorem}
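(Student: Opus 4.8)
The plan is to establish the cycle of implications $(i)\Rightarrow(iii)\Rightarrow(ii)\Rightarrow(i)$, using the structural facts already collected: Lemma~\ref{pred=suc} (so that $\leq_{pred}=\leq_{succ}$ and $K(G)$ is a normal subgroup), the characterization of semiorders via the totality of $\leq_{pred}\cap\leq_{succ}$ (Lemma~\ref{critical}), the characterization of threshold orders via the totality of $\leq_{pred}$, and the fact from Theorem~\ref{thm:three groups}(b) that $I(G)(0)$ is a convex normal subgroup with $G/I(G)(0)$ totally ordered and $\leq$ realized as the lexicographical sum of $I(G)(0)$ over the chain $G/I(G)(0)$. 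The key technical point throughout is that passing to the quotient by $K(G)$ kills exactly the $\equiv_{pred}$-class of $0$, so that $\leq_{pred}$ on $G/K(G)$ becomes antisymmetric, i.e. an order.

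\textbf{$(i)\Rightarrow(iii)$.} Assume $\leq$ is a semiorder. By Corollary~\ref{cor:semiorder} (or directly), $\leq_{pred}=\leq_{succ}$ is a total quasi-order on $G$. Its associated equivalence is $\equiv_{pred}$, whose class of $0$ is $K(G)$, a normal subgroup by (\ref{KdeG}); since $\equiv_{pred}$ is compatible, $K(G)$ is convex (it is an antichain by Theorem~\ref{thm:three groups}(a)), so by Lemma~\ref{convex} the quotient $G/K(G)$ carries the induced compatible order $\overline{\leq}$. I would check that the quasi-order $(\overline{\leq})_{pred}$ on $G/K(G)$ is precisely the order induced on $G/\equiv_{pred}$ by the total quasi-order $\leq_{pred}$: the quotient map $p\colon G\to G/K(G)$ satisfies $z<x$ (in $G$) implies $p(z)<p(x)$ or $p(z)=p(x)$, and conversely every strict inequality downstairs lifts, because $K(G)$ being an antichain means no element of a nontrivial $K(G)$-coset is comparable-down to fewer elements than another. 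Hence $(\overline\leq)_{pred}$ is a \emph{total order} on $G/K(G)$, i.e. $G/K(G)$ is a threshold group.

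\textbf{$(iii)\Rightarrow(ii)$.} Assume $G/K(G)$ is a threshold group. Note $K(G)\subseteq A(G)\subseteq I(G)(0)$ (stated after Theorem~\ref{thm:three groups}), and $I(G)(0)$ is a convex normal subgroup of $G$, so $I(G)(0)/K(G)$ is a convex normal subgroup of $G/K(G)$. A convex subgroup of a threshold group is again a threshold group: indeed the restriction of a total order ($\leq_{pred}$) to a convex subgroup is still a total order, and the $\leq_{pred}$ of the sub-poset is the restriction of the ambient $\leq_{pred}$ precisely because convexity guarantees that witnesses $z<x$ for the sub-poset are the same as those in $G/K(G)$ lying inside the convex subgroup. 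Applying this to $I(G)(0)/K(G)$ gives that it is a threshold group.

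\textbf{$(ii)\Rightarrow(i)$.} Assume $I(G)(0)/K(G)$ is a threshold group. Since $K(G)$ is an antichain, the order on $I(G)(0)$ is the lexicographical sum of copies of the antichain $K(G)$ over the chain $\bigl(I(G)(0)/K(G)\bigr)_{pred}$; a lexicographical sum of antichains over a threshold order is a semiorder (the traces of such a sum are the lexicographical sums of the componentwise traces, hence total), so the order on $I(G)(0)$ is a semiorder. Finally, by Theorem~\ref{thm:three groups}(b) the order on $G$ is the lexicographical sum of the order of $I(G)(0)$ indexed by the \emph{chain} $G/I(G)(0)$; a lexicographical sum of semiorders indexed by a chain is a semiorder, because one verifies directly that neither $2\oplus2$ nor $3\oplus1$ can be embedded — any such copy either lies inside a single component (impossible, the components are semiorders) or is split between components along a chain, which forces two of its points to be comparable to all the points in a higher component, contradicting the $2\oplus2$ or $3\oplus1$ shape. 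Hence $\leq$ is a semiorder, closing the cycle. The main obstacle I anticipate is the bookkeeping in $(iii)\Rightarrow(ii)$: verifying that the trace $\leq_{pred}$ of a \emph{convex} subgroup agrees with the restriction of the ambient trace (so that "convex subgroup of threshold group is threshold" is legitimate) — this is where convexity is genuinely used, and it must be stated carefully rather than waved through.
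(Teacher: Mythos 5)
Your overall architecture is the paper's: the same cycle $(i)\Rightarrow(iii)\Rightarrow(ii)\Rightarrow(i)$, with $(i)\Rightarrow(iii)$ handled by the coincidence of $\leq_{pred H}$ and $\leq_{H pred}$ for $H=K(G)$ (this is exactly the last assertion of Lemma~\ref{convex}, so that step is fine), and $(ii)\Rightarrow(i)$ by the two lexicographical-sum decompositions, which is also the paper's argument and is correct.

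The genuine gap is in $(iii)\Rightarrow(ii)$, precisely at the point you yourself flag as the main obstacle. The lemma you invoke --- \emph{a convex subgroup of a threshold group is again a threshold group} --- is false, and convexity is not the property that saves the argument. A counterexample is already in the paper: take the threshold order on $\ZZ^2$ of Figure~\ref{grid}, whose subgroup $A(G)=\ZZ\times\{0\}$ is an infinite antichain. Any antichain subgroup is automatically convex (if $y\leq z\leq y'$ with $y,y'$ in an antichain, then either $z=y$ or $z=y'$, since otherwise $y<y'$), yet an infinite antichain subgroup is not a threshold group: its internal order is equality, so its internal $\leq_{pred}$ is the complete relation, which is not an order. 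The underlying error is your claim that for a convex subgroup the internal trace $\leq_{pred}$ agrees with the restriction of the ambient trace; convexity does not force the witnesses $z<x$ to lie inside the subgroup (in the example above, every witness lies outside $A(G)$). What does force this is \emph{autonomy}: if $H'$ is autonomous and $z\notin H'$, then $z<x$ iff $z<y$ for all $x,y\in H'$, so external witnesses never separate two elements of $H'$. The subgroup you need, $I(G)(0)/K(G)$, happens to be autonomous (it is a connected component of the incomparability graph of $G/K(G)$, i.e.\ one verifies $I(G/K(G))(0)=I(G)(0)/K(G)$ because the cosets of the antichain $K(G)$ are autonomous antichains, so connectivity of $Inc$ passes to the quotient); this identification plus autonomy is the paper's route and is what you should substitute for the false convexity lemma. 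With that repair the proof closes correctly.
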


\begin{theorem}\label{thm:4} Let $G$ be an ordered group equipped with a semiorder. If $G= I(G)(0)$ then
\begin{enumerate}[(a)]
\item The group $G/A(G)$ is an additive subgroup of the real numbers. The image of the order on $G$ by the canonical mapping from
the group $G$ onto its quotient group $G/A(G)$ is a threshold order and if $A(G)\not =\{0\}$ the threshold is attained.
\item The order on $G$ is the lexicographical sum of copies of  $K(G)$ indexed by $G/K(G)$.

\item If $K(G)=\{0\}$, $A(G)\not = \{0\}$ and $A(G)$ is a direct factor of $G$ then the order on
$G$ is the order of $A(G)\odot_F(G/ A(G))$ for some final segment $F$ of $A(G)$.

\end{enumerate}
\end{theorem}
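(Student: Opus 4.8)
The plan is to establish (b) directly, to reduce (a) to the threshold case via Theorem~\ref{thm:3} and then to Hölder's theorem, and to deduce (c) from (a) together with Example~\ref{example6}. For (b): by Theorem~\ref{thm:three groups}(a), $K(G)$ is an autonomous antichain containing $0$; being a normal subgroup, each coset $g+K(G)$ is a translate of $K(G)$, hence again an autonomous antichain, and these cosets are indexed by the quotient poset $G/K(G)$ (a genuine poset since antichains are convex). Thus $\leq$ is the lexicographical sum $\sum_{\bar g\in G/K(G)}(g+K(G))$ of antichains, each order-isomorphic to $K(G)$ by a translation, which is exactly (b).

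For (a): since $K(G)\subseteq A(G)$ are convex normal subgroups and $A(G)/K(G)=A(G/K(G))$, passing to $G/K(G)$ alters neither $G/A(G)$ nor the hypothesis $G=I(G)(0)$, so I would assume from the outset that $G$ is a threshold group and set $\preceq:=\leq_{pred}$, a compatible total order (Lemma~\ref{pred=suc}). First one checks that $A(G)$ is $\preceq$-convex: its $\preceq$-convex hull is again a subgroup, still contained in the symmetric $\preceq$-convex set $inc(0)\cup\{0\}$, hence equals $A(G)$ by maximality; so $\preceq$ descends to a compatible total order on $\bar G:=G/A(G)$. The core step is that $(\bar G,\preceq)$ is Archimedean. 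Suppose $D$ is a proper nontrivial $\preceq$-convex subgroup of $\bar G$, with preimage $\widetilde D$ in $G$, a $\preceq$-convex subgroup satisfying $A(G)\subsetneq\widetilde D\subsetneq G$. Then $\widetilde D$ and $I_G:=inc(0)\cup\{0\}$ are two symmetric $\preceq$-convex subsets of $G$ containing $0$, hence one contains the other; $\widetilde D\subseteq I_G$ would make $\widetilde D$ an antichain subgroup, forcing $\widetilde D\subseteq A(G)$ by Theorem~\ref{thm:three groups}(c), whereas $I_G\subseteq\widetilde D$ would give $\widetilde D\supseteq\langle I_G\rangle=I(G)(0)=G$; both are impossible. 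Hence $(\bar G,\preceq)$ is Archimedean, so by Hölder's theorem it embeds as an ordered group into $(\RR,+,\leq)$, which shows $G/A(G)$ is an additive subgroup of $\RR$. Transporting the $<$-positive cone $F$ of $\bar G$ along this embedding, $F$ becomes a subsemigroup of $\RR$ avoiding $0$ and a final segment of $\bar G$ (Proposition~\ref{prop:thresholdgroup}(1)), so it equals $[\alpha,\infty)\cap\bar G$ or $(\alpha,\infty)\cap\bar G$ with $\alpha:=\inf F\ge 0$; thus the image order is $\leq_\alpha$ or $\leq_{\check\alpha}$ on a subgroup of $\RR$, a threshold order by Proposition~\ref{center}.

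The attained-threshold clause is the step I expect to be the main obstacle, because it has to be reconciled with the reduction to $K(G)=\{0\}$. Working in the threshold group $G$ and assuming $A(G)\neq\{0\}$: by Theorem~\ref{thm:three groups}(c) the subgroup $A(G)$ is a nontrivial normal convex subgroup of $(G,\preceq)$ contained in $I_G$, so condition~(b) of Proposition~\ref{prop:thresholdgroup}(2) prevents $I_G$ from being a union of cosets of $A(G)$; hence some coset $x+A(G)$ meets both $I_G$ and the positive cone $F$ of $G$. Projecting to $\bar G$ and using $\pi(F)=F_{\bar G}$, the class $\bar x$ lies in $F_{\bar G}$ while its whole fibre still meets $I_G$, and this forces $\bar x$ to be a $\preceq$-least element of $F_{\bar G}$: a strictly smaller positive class would have its entire fibre below the threshold, contradicting that the fibre meets $F$. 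So the threshold is attained. (In the borderline case $A(G)=K(G)\neq\{0\}$ the reduction collapses $A(G)$ and nothing is claimed, so the clause is really the assertion for $A(G)\supsetneq K(G)$, which is exactly what is used in (c).)

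For (c): assume $K(G)=\{0\}$, $A(G)\neq\{0\}$, and $G=A(G)\times M$ with $M$ a complement, so $M\cong G/A(G)$; by (a), $M$ is a subgroup of $\RR$ whose induced order is a threshold order with an \emph{attained} threshold $\alpha>0$, and $(A(G),\preceq|_{A(G)})$ is a totally ordered group. Because $A(G)$ is an antichain, the layers $A(G)\times\{m\}$ are antichains, and the $<$-relation between distinct layers is controlled by $m,m'$ alone: if $m'-m\succ\alpha$ the whole lower layer sits below the whole upper one, while if $m'-m=\alpha$ then $(a,m)<(a',m')$ precisely when $a'-a$ lies in a fixed normal final segment $F$ of $(A(G),\preceq)$ (normality from that of $A(G)$ plus compatibility; the non-degeneracy of $F$ and of $A(G)\setminus(-F\cup F)$ from Proposition~\ref{prop:thresholdgroup}(2)(b) applied to $G$). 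This identifies $\leq$ with the order $\leq_{F,\alpha}$ of Example~\ref{example6}, i.e.\ $G=A(G)\odot_{F,\alpha}M$, which is (c). The delicate point here, again resting on the attained-threshold part of (a), is to check that the gap between consecutive layers is exactly $\alpha$ and that $F$ does not depend on the chosen coset representatives.
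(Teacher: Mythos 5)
Your proof is correct, and its overall architecture matches the paper's: part (b) is the autonomous-antichain/lexicographical-sum argument (Lemma \ref{lem:interval} plus Lemma \ref{convex}); the attained-threshold clause and part (c) use exactly the paper's device of a coset of $A(G)$ that meets both $inc(0)\cup\{0\}$ and the positive cone, via condition $(b)$ of Proposition \ref{prop:thresholdgroup}. Where you genuinely diverge is the heart of part (a), the Archimedean property of $G/A(G)$. The paper reaches it through Lemmas \ref{lem2:6}, \ref{lem3:6} and \ref{lem3:4}: chains inside a connected component of the incomparability graph embed into $\ZZ$, so maximal chains of the positive cone have type $\omega$, whence the quotient is Archimedean. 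You instead show that $(G/A(G),\preceq)$ has no proper nontrivial convex subgroup, by pulling such a subgroup back to a symmetric $\preceq$-convex subgroup $\widetilde D$ with $A(G)\subsetneq\widetilde D\subsetneq G$ and playing it against the symmetric $\preceq$-convex set $inc(0)\cup\{0\}$: one must contain the other, and either containment is killed by maximality of $A(G)$ (Lemma \ref{lem5:0}) or by Lemma \ref{lem4:0} together with $G=I(G)(0)$. You then invoke $(iii)\Rightarrow(ii)$ of Theorem \ref{archimedean} rather than $(i)\Rightarrow(ii)$. This is in effect the dichotomy of Theorem \ref{convexsubgroup} specialized to $\preceq$-convex subgroups, and it buys a shorter, more structural route that avoids the chain-length analysis; the paper's route, in exchange, yields the finer order-theoretic information of Theorem \ref{thm:6}. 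Your parenthetical remark about the borderline case $A(G)=K(G)\neq\{0\}$ is well taken: there the reduction to $G/K(G)$ delivers a trivial $A$, so the ``attained'' clause is only established (by your argument and by the paper's, which cites Lemma \ref{lem4:4} for threshold groups) when $A(G)\supsetneq K(G)$ — which is the only case used in (c). The remaining small imprecisions (e.g.\ the degenerate possibility $\inf F=0$, where the image order is total and still a threshold order) do not affect correctness.
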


\begin{theorem}\label{thm:6}Let $G$ be an ordered group. If the order is a semiorder, but not an antichain,  and $G=I(G)(0)$, then the following propositions are true:
\begin{enumerate}
\item Every maximal chain of  $G$ is isomorphic to the chain of integers.
\item $K(G)=\{0\}$ and $G$ has no infinite antichain if and only if $G$ is isomorphic to the group of integers equipped with a threshold order.
\end{enumerate}
\end{theorem}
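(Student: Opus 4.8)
Throughout write $\preceq$ for the quasi-order $\leq_{pred}$, which by Lemmas~\ref{pred=suc} and \ref{critical} is a compatible total quasi-order, and put $K:=K(G)$. The plan for \textbf{(1)} is to reduce to a threshold group by quotienting by $K$ and then to push a maximal chain into $\RR$ via Theorem~\ref{thm:4}(a); the plan for \textbf{(2)} is to combine this with a density dichotomy for $\preceq$.

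\textbf{Proof of (1).} By Theorem~\ref{thm:3}, $\bar G:=G/K$ is a threshold group, and by Theorem~\ref{thm:4}(b) the order of $G$ is the lexicographic sum, indexed by the poset $\bar G$, of copies of $K$, each of which is an antichain by Theorem~\ref{thm:three groups}(a). In a lexicographic sum of antichains over a poset $Q$, a maximal chain picks one element in each component lying over a fixed maximal chain $J$ of $Q$ and is order-isomorphic to $J$; hence every maximal chain of $G$ projects to, and is isomorphic to, a maximal chain of $\bar G$. Moreover $\bar G$ still satisfies the standing hypotheses: it is not an antichain (otherwise $\bar G=\{0\}$ and $G=K$ would be one), and $\bar G=I(\bar G)(0)$, because $K$ is autonomous, so images of elements incomparable to $0$ are incomparable to $\bar 0$, while $inc(0)\cup\{0\}$ generates $G$ by Theorem~\ref{thm:three groups}(b). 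So it suffices to prove the statement for a threshold group $H$ with $H=I(H)(0)$ whose order is not an antichain. Fix such an $H$. By Proposition~\ref{prop:thresholdgroup}(2) there is a normal final segment $F$ of $(H,\preceq)$ with $0\notin F$ and $x<_H y$ iff $y-x\in F$; since $F$ avoids $0$ we get $F\subseteq\{x:0\prec x\}$, whence $F+F\subseteq F$. Let $C$ be a maximal chain of $H$. If $C$ had a largest element $m$, then for any $g\in F$ (which exists, $H$ not being an antichain) one checks, using $F+F\subseteq F$ and normality of $F$, that $m+g>_H c$ for every $c\in C$, so $C\cup\{m+g\}$ is a strictly larger chain; symmetrically $C$ has no least element. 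Now Theorem~\ref{thm:4}(a) gives a group morphism $q\colon H\to H/A(H)\subseteq\RR$ whose image carries a threshold order $\leq_\alpha$ or $\leq_{\check\alpha}$ with $\alpha>0$: if $A(H)\neq\{0\}$ the threshold is attained and hence positive, while if $A(H)=\{0\}$ a total order on $H\subseteq\RR$ is impossible (it would give $inc(0)=\varnothing$, hence $H=I(H)(0)=\{0\}$, excluded), so $\alpha>0$ again. Since $c<_H c'$ implies $q(c')-q(c)\ge\alpha$, the map $q$ restricts to an order isomorphism of $C$ onto an $\alpha$-separated subset of $\RR$; such a subset meets every bounded interval finitely, hence is order-isomorphic to a subset of $\ZZ$. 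Thus $C$ is isomorphic to a subset of $\ZZ$ with neither a largest nor a smallest element, and such a subset is isomorphic to $\ZZ$.

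\textbf{Proof of (2).} $(\Leftarrow)$ If $G\cong(\ZZ,\leq_a)$ then $G$ is a threshold group by Example~\ref{example1}, so $\preceq$ is an order and $K(G)=\{0\}$; and any antichain $S$ satisfies $S-S\subseteq\{x:|x|<a\}$, hence is finite. $(\Rightarrow)$ Assume $K(G)=\{0\}$ and $G$ has no infinite antichain. Then $G$ is a threshold group, so $\preceq$ is a total order; take $F$ as in Proposition~\ref{prop:thresholdgroup}(2) and set $I:=G\setminus(-F\cup F)=inc(0)\cup\{0\}$, which is $\preceq$-convex and, since $G=I(G)(0)\neq\{0\}$, has at least two elements. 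If $\preceq$ were dense, then choosing $0\prec h$ in $I$ and, by density, infinitely many $0\prec h_1\prec h_2\prec\cdots\prec h$, all the $h_i$ and all differences $h_j-h_i$ would lie in the convex set $I$, so $\{0,h_1,h_2,\dots\}$ would be an infinite antichain, a contradiction; hence $\preceq$ is not dense. By the remark following Theorem~\ref{thm:three groups}, $G=I(G)(0)=\ZZ a$ for the least $\preceq$-positive element $a$, so $G\cong\ZZ$; its induced order is a compatible semiorder with nonempty cone, so Example~\ref{example1} identifies it with some $\leq_b$, a threshold order on $\ZZ$.

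\textbf{Where the difficulty lies.} The crux is ruling out a dense maximal chain: one needs a genuine, strictly positive ``step'', and this is available only after passing to the Archimedean quotient $H/A(H)\hookrightarrow\RR$ of Theorem~\ref{thm:4}(a), and only for a threshold group, which is why I first quotient $G$ by $K$ (where the threshold is cleanly positive) rather than argue with $G$ directly. The remaining ingredients --- autonomy of $K$, the behaviour of maximal chains in a lexicographic sum of antichains, and the elementary facts about $\alpha$-separated subsets of $\RR$ and subsets of $\ZZ$ --- are routine.
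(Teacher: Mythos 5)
Your proof of (1) is correct but takes a genuinely different route from the paper's. The paper proves a purely order-theoretic fact (Lemmas \ref{lem2:6} and \ref{lem3:6}): in a semiorder, every chain contained in a connected component of the incomparability graph embeds into $\ZZ$, so every maximal chain of the positive cone has type $\omega$, and translation-invariance finishes the job. You instead quotient by $K(G)$ to reach a threshold group $H$, rule out endpoints of a maximal chain via $F+F\subseteq F$, and then push the chain through $H\to H/A(H)\subseteq\RR$ (Theorem \ref{thm:4}(a)), using the $\alpha$-separation of the image. This is sound (including the verification that $\alpha>0$ in both cases), but it invokes the H\"older machinery behind Theorem \ref{thm:4}(a), whereas the paper's Lemma \ref{lem3:6} is elementary and is stated for exactly this purpose. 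One small slip: if $\bar G=G/K$ were an antichain, the right conclusion is that $G$, being a lexicographical sum of antichains indexed by an antichain, would itself be an antichain --- not that ``$\bar G=\{0\}$''; the contradiction you need still stands.

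In (2) the backward direction and the density dichotomy are fine, but the last step of the forward direction has a genuine gap: you deduce $G=I(G)(0)=\ZZ a$ from ``$\preceq$ is not dense'' by citing the remark following Theorem \ref{thm:three groups}, and that implication is false as stated. Take $G=\ZZ^2$ with the lexicographic total order $\preceq$ (first coordinate dominant) and the threshold order determined by the normal final segment $F=\{x:(1,0)\preceq x\}$. Then $\leq_{pred}=\preceq$ is total and not dense, with least positive element $a=(0,1)$, yet $(1,-1)\in inc(0)$, so the least subgroup containing $inc(0)$ --- which is $I(G)(0)$ by Theorem \ref{thm:three groups}(b) --- is all of $\ZZ^2\neq\ZZ a$. (This $G$ contains the infinite antichain $\{0\}\times\ZZ$, so it contradicts only the remark, not the theorem.) To close the gap you must use the no-infinite-antichain hypothesis a second time, as the paper does: $inc(0)$ is bipartite (Theorem \ref{claim:interval}) and has no infinite antichain, hence is finite; being a finite $\preceq$-convex symmetric set around $0$ it equals $\{ia:-m\leq i\leq m\}$ where $a$ is the $\preceq$-successor of $0$, and therefore $I(G)(0)=\langle inc(0)\rangle=\ZZ a$. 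With that substitution your argument is complete.
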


The following result gives insights on the structure of convex subgroups of an ordered group whose order is a semiorder.

\begin{theorem}\label{convexsubgroup}
Let $G=(X,+,\leq)$ be an ordered group such that $\leq$ is a semiorder. If $H$ is a convex subgroup of $G$, then either $H\subseteq A(G)$, in which case $H$ is an antichain, or $I(G)(0)\subseteq H$. If the latter holds and $H$ is a normal subgroup of $G$, then $G/H$ is totally ordered and $G$ is the lexicographical sum of the cosets of $H$.
\end{theorem}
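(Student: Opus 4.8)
The plan is to reduce the stated dichotomy to a single implication. If the convex subgroup $H$ is an antichain, there is nothing to prove: for each $x\in H$ the cyclic subgroup $\ZZ x$ lies in $H$, hence is an antichain, so $x\in A(G)$; thus $H\subseteq A(G)$ (and then of course $H$ is an antichain, since $A(G)$ is one by Theorem \ref{thm:three groups}$(c)$). So I would concentrate on proving that \emph{if the convex subgroup $H$ is not an antichain, then $I(G)(0)\subseteq H$}.

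Assume $H$ is not an antichain. Then I would pick $a<b$ in $H$ and put $c:=b-a$, a positive element of $H$; since $H$ is a subgroup and $0<c<2c<\cdots$, convexity gives $[-nc,nc]:=\{z:-nc\le z\le nc\}\subseteq H$ for every $n\ge 1$. The core claim is that $inc(0)\subseteq H$. So let $x\in inc(0)$ and suppose $x\notin H$; then $x\notin[-nc,nc]$ for all $n$. If both $x\le n_1c$ and $-n_2c\le x$ held for some $n_1,n_2$, then with $n=\max(n_1,n_2,1)$ we would get $x\in[-nc,nc]\subseteq H$, a contradiction; hence, after replacing $x$ by $-x$ if necessary (note $-x\in inc(0)\setminus H$, using (\ref{equ:preserve})), I may assume $x\not\le nc$ for every $n\ge 1$. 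Also $x\not\ge nc$ for every $n\ge 1$, since $x\ge nc\ge c>0$ would contradict $x\in inc(0)$. Thus $x$ is incomparable to $nc$ for every $n\ge 1$, and right-translating by $-nc$ yields $x-nc\in inc(0)$ for every $n\ge 0$. But $x-2c<x-c<x$ (in the nonabelian case this is the normality of the positive cone), so $inc(0)$ would contain a three-element chain, contradicting the fact that $inc(0)$ is bipartite (Theorem \ref{claim:interval}). Hence $inc(0)\subseteq H$, and since $I(G)(0)$ is the least subgroup containing $inc(0)\cup\{0\}$ (Theorem \ref{thm:three groups}$(b)$), we conclude $I(G)(0)\subseteq H$.

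For the final assertion I would invoke the lexicographical description of Theorem \ref{thm:three groups}$(b)$: with $C:=G/I(G)(0)$ (a chain), the order on $G$ is the lexicographical sum of the cosets $D_i$, $i\in C$, of $I(G)(0)$. Suppose $I(G)(0)\subseteq H$ and $H$ is normal, and set $\overline H:=H/I(G)(0)\le C$. First, $\overline H$ is convex in $C$: given $\bar a\le\bar m\le\bar b$ in $C$ with $\bar a,\bar b\in\overline H$, either $\bar m\in\{\bar a,\bar b\}\subseteq\overline H$, or $\bar a<\bar m<\bar b$, and then picking $y\in D_{\bar m}$ and representatives $a,b\in H$ of $\bar a,\bar b$ the lexicographical order gives $a<y<b$, whence $y\in H$ by convexity of $H$ and $\bar m\in\overline H$. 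Since a quotient of a totally ordered group by a convex normal subgroup is totally ordered, $G/H\cong C/\overline H$ is a chain. Finally, because $I(G)(0)\subseteq H$, each coset of $H$ is a union $\bigcup_{i\in\,\bar g+\overline H}D_i$ of cosets of $I(G)(0)$, so by associativity of lexicographical sums the order on $G$ is the lexicographical sum, indexed by the chain $G/H$, of the cosets of $H$.

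I expect the one genuinely delicate point to be the middle paragraph: manufacturing a long chain inside $inc(0)$ out of the single hypothesis $x\notin H$. The mechanism — the multiples $nc\in H$ drive $x$ arbitrarily far from $0$ while remaining incomparable to it, so that $x,x-c,x-2c$ form a three-element chain in $inc(0)$ — is short once spotted, and everything else (the antichain case, the convexity of $\overline H$, associativity of lexicographical sums) is routine bookkeeping about convex subgroups.
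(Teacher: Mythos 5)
Your proof is correct, and its skeleton coincides with the paper's: dispose of the antichain case via the definition (equivalently, the maximality) of $A(G)$; show that a convex subgroup which is not an antichain must contain $inc(0)$; pass from $inc(0)$ to $I(G)(0)$ by the least-subgroup property (Lemma \ref{lem4:0}); and finish with the lexicographical decomposition over $G/I(G)(0)$. The one step you argue genuinely differently is the crucial inclusion $inc(0)\subseteq H$. The paper first shows (Lemma \ref{lem7:0}) that a convex subgroup which is not an antichain is a convex subset of the \emph{total} quasi-order $\leq_{pred}$; any $g\in inc(0)\setminus H$ would then lie entirely above or entirely below $H$ with respect to $\leq_{pred}$, and comparing $g$ with a strictly positive $h\in H$ forces $0<g$ or $g<0$, a contradiction. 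You instead take a positive $c\in H$, use convexity to trap the sets $\{z:-nc\leq z\leq nc\}$ inside $H$, and show that any $x\in inc(0)\setminus H$ (after possibly replacing $x$ by $-x$) remains incomparable to every $nc$, so that $x-2c<x-c<x$ is a three-element chain inside $inc(0)$, contradicting bipartiteness (Theorem \ref{claim:interval}). The two arguments invoke the semiorder hypothesis in equivalent packagings ($\leq_{pred}$ total versus $inc(0)$ bipartite, i.e.\ no $1\oplus 3$); yours is more hands-on and bypasses Lemma \ref{lem7:0} entirely, at the cost of the small case analysis needed to reduce to ``$x\not\leq nc$ for all $n$''. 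Your nonabelian bookkeeping is accurate (normality of the positive cone for $x-2c<x-c<x$, the use of (\ref{equ:preserve}) when replacing $x$ by $-x$), and your final paragraph on the convexity of $H/I(G)(0)$ and the associativity of lexicographical sums is a slightly more detailed version of what the paper asserts.
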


A consequence of Theorems \ref{thm:three groups} and \ref{thm:4} is the following proposition.

\begin{proposition}\label{prop:existence}
For a group $G$ the following properties are equivalent:

\begin{enumerate}[{(i)}]
\item $G$ can be endowed with a compatible semiorder distinct from the equality relation;
\item $G$ can be endowed with a compatible weak order distinct from the equality relation;
\item $G$ has a proper normal subgroup $H$ whose quotient $G/H$ is totally orderable.
\end{enumerate}
\end{proposition}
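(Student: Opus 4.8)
The plan is to establish the cycle of implications $(i)\Rightarrow(iii)\Rightarrow(ii)\Rightarrow(i)$, exploiting the structural results already in hand, especially Corollary \ref{cor:existenceweak}, Theorem \ref{thm:three groups} and Theorem \ref{thm:4}. The implication $(ii)\Rightarrow(i)$ is trivial since every weak order is a semiorder (it does not embed $1\oplus 2$, hence a fortiori not $2\oplus2$ nor $3\oplus1$). The implication $(iii)\Rightarrow(ii)$ is exactly the content of Corollary \ref{cor:existenceweak}: if $H$ is a proper normal subgroup with $G/H$ totally orderable, pull back a nontrivial total order on $G/H$ along the quotient map to get a compatible weak order on $G$ whose positive cone is the preimage of the positive cone of $G/H$; this order is distinct from equality precisely because $H\neq G$. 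So the whole force of the proposition is in $(i)\Rightarrow(iii)$.

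For $(i)\Rightarrow(iii)$, suppose $\leq$ is a compatible semiorder on $G$ distinct from the equality relation. The idea is to manufacture, from this semiorder, a proper normal subgroup with totally orderable quotient. I would first dispose of the case where $\leq$ is itself a weak order, where Proposition \ref{claim:weakorder} directly gives the normal subgroup $H=inc(0)\cup\{0\}$ with $G/H$ totally ordered (and $H\neq G$ since $\leq$ is not equality, as a total order on $G/H$ reduced to a point would force $\leq$ to be an antichain — but then $\leq$ would be the equality relation only if $G$ is trivial; one must be slightly careful here, see below). In general, consider the subgroups $K(G)\subseteq A(G)\subseteq I(G)(0)$ of Theorem \ref{thm:three groups}. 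The natural candidate for $H$ is $I(G)(0)$: by part (b) of that theorem the quotient $G/I(G)(0)$ is totally ordered, and the order on $G$ is the lexicographical sum of $I(G)(0)$ indexed by that chain. So it remains only to check that $I(G)(0)$ is a \emph{proper} subgroup, equivalently that $G\neq I(G)(0)$. If $G=I(G)(0)$, then $G=inc(0)$-generated and, by Theorem \ref{thm:6}(1) combined with Theorem \ref{thm:4}(a), the quotient $G/A(G)$ embeds into the reals as a threshold order with attained threshold whenever $A(G)\neq\{0\}$; in particular $G/A(G)$ is totally orderable and nontrivial unless $A(G)=G$, i.e.\ unless $G$ is an antichain, i.e.\ unless $\leq$ is the equality relation — contradicting $(i)$. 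If instead $A(G)=\{0\}$ while $G=I(G)(0)$, then again $G/A(G)=G$ is a nontrivial threshold group, hence totally orderable by pulling back along... — more precisely a threshold group admits the compatible total order $\leq_{pred}$, which is proper-quotient data with $H=\{0\}$. Either way we land in $(iii)$.

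The main obstacle is the bookkeeping around the degenerate cases: one must track exactly when the constructed quotient collapses to a point (forcing $\leq$ to be equality) and ensure the subgroup $H$ produced is genuinely proper. The cleanest route is probably: if $G=I(G)(0)$ use $H=A(G)$ together with Theorem \ref{thm:4}(a) (the quotient $G/A(G)$ is a subgroup of $\RR$, hence totally ordered, and is proper since $A(G)=G$ would make $G$ an antichain hence $\leq$ equality); if $G\neq I(G)(0)$ use $H=I(G)(0)$ together with Theorem \ref{thm:three groups}(b). In both sub-cases $H$ is normal and $G/H$ is totally ordered, and $H\neq G$, which is precisely $(iii)$. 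I would present it in this two-case form to avoid any circularity with the weak-order special case.
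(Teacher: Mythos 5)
Your proposal is correct and follows essentially the same route as the paper: the cycle $(i)\Rightarrow(iii)\Rightarrow(ii)\Rightarrow(i)$, with $(iii)\Rightarrow(ii)$ delegated to Corollary \ref{cor:existenceweak} and the key implication $(i)\Rightarrow(iii)$ handled by the same two-case split, taking $H=I(G)(0)$ when $I(G)(0)\neq G$ (Theorem \ref{thm:three groups}(b)) and $H=A(G)$ when $I(G)(0)=G$ (Theorem \ref{thm:4}(a)). Your extra check that $A(G)\neq G$ (else the order is an antichain, i.e.\ equality) is a welcome refinement the paper leaves implicit.
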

\begin{proof}
$(i)\Rightarrow (iii)$ Suppose that $G$ is endowed with a compatible semiorder. We consider two cases.\\
\textbf{Case 1.}  $I(G)(0)\not= G$. According to $(b)$ of Theorem \ref{thm:three groups}, the group $H= I(G)(0)$ is a normal subgroup of $G$ and the quotient $G/H$ is totally ordered.\\
\textbf{Case 2.} $I(G)(0)\not= G$. According to $(a)$ of Theorem \ref{thm:4}, the group $A(G)$ is a normal subgroup of $G$ and the quotient $G/A(G)$ is a subgroup of the additive group of the set $\RR$ of  reals numbers, hence is totally orderable.\\
$(iii)\Rightarrow (ii)$ This is Corollary \ref{cor:existenceweak}.\\
$(ii)\Rightarrow (i)$ Obvious: a weak order is a semiorder.
\end{proof}

This paper contains six  more sections. Section \ref{subsection:convex} contains some general properties of  convex sets and autonomous sets of a quasi-ordered group. Section \ref{section:basic} contains   properties of  normal subsets of a group and the proofs of Propositions  \ref{claim:weakorder},  \ref{prop:thresholdgroup}  and Theorem \ref{claim:interval}. Section \ref{section:properties of the groups} describes some properties of the groups $I(G)(0)$ and $A(G)$ and contains the proofs of  Theorems \ref{thm:three groups} to \ref{convexsubgroup}  about $K(G)$, $I(G)(0)$ and $A(G)$.
Section \ref{subsection-clifford} contains a description of Clifford's example and a proof of Theorem \ref{fingen}.  Section \ref{subsection:dimension} contains an effective proof that a threshold group with attained threshold has dimension at most three.
Section \ref{section:comparison} introduces an  extension of the quasi-order of embeddability between posets and describes some equivalence classes.

\section{Some general properties of convex sets and autonomous sets of a quasi-ordered group}\label{subsection:convex}
\begin{lemma}\label{lem:interval} If $G:= (X, +, \leq)$ is a quasi-ordered group and $\overline 0$ be the equivalence class of $0$ with  respect to the quasi-order $\leq$. Then $\overline 0$ is the least convex subgroup of $G$. The subgroup  $K(G)$ is an autonomous subset in $(X, \leq)$ and the  direct sum of the cosets of $\overline{0}$ it contains. If $\overline{0}=\{0\}$, that is, $\leq$ is an order, then $K(G)$ is the largest autonomous subset in $(X,\leq)$ which contains $\{0\}$ and is an antichain. Furthermore $K(G)\setminus \{0\}$ is the set of isolated vertices of the poset $inc(0)$.
\end{lemma}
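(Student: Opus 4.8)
The plan is to unpack the four assertions one at a time, since each is a short deduction from the definitions of $\overline 0$, $K(G)$, and the traces $\leq_{pred}$, $\leq_{succ}$. First I would treat the claim that $\overline 0$ is the least convex subgroup. That $\overline 0$ is a subgroup is part (a) of Lemma \ref{pred=suc} (the equivalence relation $\equiv$ is compatible, so $\overline 0$ is a normal subgroup). Convexity of $\overline 0$ is immediate: if $x \leq z \leq y$ with $x,y \in \overline 0$, then $0 \equiv x \leq z \leq y \equiv 0$ forces $0 \leq z \leq 0$, so $z \in \overline 0$. For minimality, any convex subgroup $H$ contains $0$, and if $z \in \overline 0$ then $0 \leq z$ and $z \leq 0$; translating, $-z \leq 0 \leq z$ [wait — more directly] $0 \leq z \leq 0$ and since $0 \in H$, convexity gives $z \in H$. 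Hence $\overline 0 \subseteq H$.

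Next, $K(G)$ is autonomous and is the direct sum of the cosets of $\overline 0$ it contains. Recall $K(G)$ is the $\equiv_{pred}$-class of $0$, which by (\ref{KdeG}) is a normal subgroup. To see it is autonomous in $(X,\leq)$: if $x \in X \setminus K(G)$ and $y,y' \in K(G)$, I must show $x \leq y \iff x \leq y'$ and $y \leq x \iff y' \leq x$. Using (\ref{equa:1}), $x < y$ implies $x <_{pred} y$; but $y \equiv_{pred} 0 \equiv_{pred} y'$, so $x <_{pred} y'$, and by the structure of the trace this propagates back. More carefully, I would argue through the equivalences $x \leq y$, $x <_{pred} y$, and the fact that $\equiv_{pred}$ is the kernel equivalence of the total quasi-order structure on the trace (Lemma \ref{intervalorder} applies if $\leq$ is an interval order, but here $G$ is merely quasi-ordered, so I should instead argue directly: $y \equiv_{pred} y'$ means $\{z : z < y\} = \{z : z < y'\}$ and $\{z : y < z\} = \{z : y' < z\}$, by definition of $\leq_{pred}$ and $\leq_{succ} = \leq_{pred}$; this immediately gives $x < y \iff x < y'$ and $y < x \iff y' < x$, hence the comparability conditions since comparisons inside $K(G)$ with an outside element cannot be equalities). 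The autonomy of $K(G)$ then lets me write $K(G)$ as a lexicographical sum of its $\equiv$-classes; but each $\equiv$-class inside $K(G)$ is a coset of $\overline 0$, and since $K(G)$ is an antichain modulo $\equiv$ — which is exactly the content that makes $\leq_{pred}$ restricted to $K(G)$ trivial — the index poset is an antichain, i.e. the sum is direct.

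The third assertion assumes $\overline 0 = \{0\}$, i.e.\ $\leq$ is an order. Then each coset of $\overline 0$ is a singleton, so by the previous paragraph $K(G)$ is itself an antichain, autonomous, containing $0$. Maximality: suppose $B$ is an autonomous antichain containing $0$. For any $b \in B$ and any $z \in X$, if $z < 0$ then by autonomy (applied with $0, b \in B$ and $z \notin B$, noting $z \neq b$ since $B$ is an antichain while $z < 0 \leq$ ... actually $z \notin B$ because if $z \in B$ then $z$ and $0$ would be comparable, contradicting antichain) we get $z < b$; symmetrically $0 < z$ implies $b < z$. That is exactly $0 \leq_{pred} b$ and $0 \leq_{succ} b$, and by the dual statement (autonomy is symmetric in $0$ and $b$) also $b \leq_{pred} 0$; hence $b \equiv_{pred} 0$, i.e.\ $b \in K(G)$. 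So $B \subseteq K(G)$. Finally, $K(G) \setminus \{0\}$ being the isolated vertices of $inc(0)$: an element $x \in K(G) \setminus \{0\}$ is incomparable to $0$ (it cannot be comparable, else $0 \leq_{pred} x$ combined with, say, $0 < x$ would force via $x \equiv_{pred} 0$ a contradiction with (\ref{equa:1}) as in the proof of that display), and incomparable to every element it is $\equiv$-unrelated... more precisely $x \equiv_{pred} 0$ means $x$ has the same strict predecessors and successors as $0$, namely none among... no — rather, $x \nsim z$ for all $z \neq x$: if $x < z$ then $0 <_{pred} z$ hence $0 < z$, but then $x$ and $0$ both sit below $z$, fine, but we need $x$ isolated in $inc(0)$, i.e.\ incomparable to all $y$ with $y \nsim 0$; if $x \sim y$ and $y \nsim 0$, then from $x < y$ (say) and $x \equiv_{pred} 0$ we'd get $0 <_{pred} y$, hence $0 < y$, contradicting $y \nsim 0$; the other case $y < x$ gives $y <_{succ} 0$, hence $y < 0$, same contradiction. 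Conversely an isolated vertex $x$ of $inc(0)$ is incomparable to $0$ and to everything incomparable to $0$, which quickly yields $x \equiv_{pred} 0$. The main obstacle I anticipate is being careful in the second and fourth parts not to invoke Lemma \ref{intervalorder} (which needs $\leq$ an interval order) and instead to work purely from the definitions of $\leq_{pred}$, $\leq_{succ}$ and their equality in a quasi-ordered group (Lemma \ref{pred=suc}(d)); the algebra is routine once the logical structure of the trace equivalences is pinned down.
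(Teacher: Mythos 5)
Your proposal is correct and follows essentially the same route as the paper's proof: every assertion is checked directly from the definitions of the traces together with Lemma \ref{pred=suc}(d), including the observations that $K(G)\subseteq inc(0)\cup\overline 0$, that autonomy of $K(G)$ comes from $\equiv_{pred}$-equivalent elements having identical sets of strict predecessors and strict successors, and that any autonomous antichain through $0$ is forced into $K(G)$. One inference is stated backwards and should be repaired when you write this up: in the isolated-vertex argument the step ``$0<_{pred}y$, hence $0<y$'' (and likewise ``$y<_{succ}0$, hence $y<0$'') is not valid, since the traces only \emph{extend} the strict order; what you actually need is the definitional reading of $0\leq_{succ}x$ and $x\leq_{pred}0$ --- namely $x<y$ implies $0<y$ and $y<x$ implies $y<0$ --- which is exactly how the paper argues and is already contained in your predecessor/successor-set characterization of $\equiv_{pred}$.
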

\begin{proof}
The first statement is obvious. For the second, let
$inc(0)$ be the subset of $X$ of elements incomparable to $0$. We notice at first that $\overline{0}\subseteq K(G) \subseteq inc(0) \cup \overline 0$. The first inclusion is obvious; for the second, let $x\in K(G)$ that is $x\equiv_{pred} 0$. If $x\not \in inc(0)\cup \overline 0$ then either $x<0$ or $x>0$. If $x<0$ then from $0\leq_{pred} x$ we get $x<x$ which is impossible; if $0<x$ then since  $x\leq_{pred} 0$ we get  $0<0$ which is also impossible. We also note that $\overline{0}$ is an autonomous subset of $(X, \leq)$ and hence is an autonomous subset in $K(G)$. Next we observe the following. Let $A$ be an autonomous subset containing $0$. If the quasi-order on $A$ is either the equality relation or the complete relation then $A \subseteq K(G)$. Indeed, let $a\in A$. We claim that $a\equiv_{pred} 0$ hence $a\in K(G)$. Indeed, let $x<a$. From our assumption on $A$ we deduce that $x\not \in A$. Since $A$ is autonomous, $x<0$ hence $a\leq_{pred} 0$. Similarly, $0\leq _{pred} a$ hence $a\equiv_{pred} 0$. Next we prove that $K(G)$ is an autonomous subset in $(X, \leq)$. Indeed, let $x\not \in K(G)$ and $y,y'\in K(G)$. If $x\leq y$, then $y\not \leq x$, otherwise $x\equiv y$ and $x\in \overline{0} \subseteq K(G)$ which is excluded. Since $y\equiv_{pred} y'$ we have $x<y'$, hence $x\leq y'$. Similarly, since $\leq_{pred}= \leq_{succ}$, then $y\leq x$ is equivalent to $y'\leq x$. Hence $K(G)$ is autonomous.
Suppose that $\overline{0}= \{0\}$. In this case, $K(G)$ is an antichain. From our observation, this is the largest autonomous subset containing $0$ which is an antichain. Now, let $x\in K(G)\setminus \{0\}$, then $x\in inc(0)$. Suppose that $x$ is not isolated in $inc(0)$, that is either $x<y$ or $y<x$ for some $y\in inc(0)$. In the first case, from $0\leq_{succ}x$ and $x<y$ we obtain $0<y$ contradicting $y\in inc(0)$. Using the fact that $x\leq_{prec}0$, we get the same conclusion in the second case. Hence $x$ is isolated. Conversely, let $x$ be isolated in $inc(0)$. If $0\not \leq_{succ}x$ then there is some $y\in X$ such that $x<y$ but $0\not < y$. Necessarily, $y\in inc(0)$, hence $x$ is not isolated, a contradiction. Hence $0\leq_{succ} x$. Similarly, $x\leq_{pred} 0$. Since $\leq_{pred}= \leq_{succ}$, $x\equiv_{pred}0$, and therefore $x\in K(G)$.
\end{proof}

Let $G$ be a group, $H$ be a normal subgroup of $G$ and $G/H$ be the quotient group. Suppose that $G$ is equipped with a compatible quasi-order $\leq$. Let $\leq_H$ be the image of $\leq$ by the quotient map. That is $\alpha\leq_H\beta$ in $G/H$ if  $a\leq b$ for some $a\in \alpha$ and $b\in \beta$ (equivalently,  $a\leq b$ for every $a\in \alpha$ and some $b\in \beta$). The relation $\leq_H$ is a compatible quasi-order (Proposition 4, page 25 of \cite{kopytov}). Furthermore:

\begin{lemma}\label{convex} Let $G:=(X,+,\leq)$ be a quasi-ordered group and  $H$ be a normal subgroup of $(X,+)$.  Then $\leq_H$ is a compatible order relation on $G/H$ if and only if $H$ is a convex subgroup of $G$. If $H$ is a convex and autonomous subset in $(X, \leq)$, then the quasi-order on $G$ is the lexicographical sum of copies of $H$ indexed by $G/H$. Furthermore, $\leq_{pred H}$, the image of $\leq_{pred}$, is included into $\leq_{H pred}$, the quasi-order associated to $\leq_H$. If $H= K(G)$ these quasi-orders coincide.
\end{lemma}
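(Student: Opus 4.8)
The plan is to dispatch the four assertions separately; the first two are essentially book‑keeping, the third contains the one genuine subtlety, and the fourth builds on the third. For the equivalence ``$\leq_H$ is an order $\iff$ $H$ is convex'': since (as recalled just above the statement) $\leq_H$ is always a compatible quasi‑order, only antisymmetry is at stake. If $\leq_H$ is an order and $h\leq z\leq h'$ with $h,h'\in H$, then applying the quotient map gives $\bar 0\leq_H\bar z\leq_H\bar 0$, so $\bar z=\bar 0$ and $z\in H$; thus $H$ is convex. Conversely, assume $H$ convex and $\bar x\leq_H\bar y\leq_H\bar x$. Using the reformulation ``$\alpha\leq_H\beta$ iff $a\leq b$ for every $a\in\alpha$ and some $b\in\beta$'', pick $a\in\bar x$, then $b\in\bar y$ with $a\leq b$, then $a'\in\bar x$ with $b\leq a'$; left‑translating $a\leq b\leq a'$ by $-x$ places $-x+b$ between the two elements $-x+a,-x+a'$ of $H$, so $-x+b\in H$ and $b\in\bar x$, whence $\bar x=\bar y$.

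For the lexicographic decomposition, assume $H$ convex and autonomous; by the previous point $\leq_H$ is an order, so $(G/H,\leq_H)$ is a legitimate index poset. Each coset $\gamma$ of $H$ is the image of $H$ under a translation, hence (translations being automorphisms of $(X,\leq)$) autonomous and order‑isomorphic to $H$, and the cosets partition $X$; within a coset the order is by definition the induced one. So everything reduces to showing that for $a\in\alpha$, $b\in\beta$ with $\alpha\neq\beta$ one has $a\leq b$ iff $\alpha<_H\beta$. Here ``$\Rightarrow$'' is the definition of $\leq_H$, and for ``$\Leftarrow$'' one takes $a_0\in\alpha$, $b_0\in\beta$ with $a_0\leq b_0$; since $\beta$ is autonomous and $a_0\notin\beta$ this gives $a_0\leq b$ for all $b\in\beta$, and since $\alpha$ is autonomous and $b\notin\alpha$ it gives $a\leq b$ for all $a\in\alpha$. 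This is exactly the assertion that $\leq$ is the lexicographic sum of the cosets of $H$ indexed by $(G/H,\leq_H)$.

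For $\leq_{pred H}\subseteq\leq_{H pred}$, let $\alpha\leq_{pred H}\beta$, i.e.\ $a\leq_{pred}b$ for some $a\in\alpha$, $b\in\beta$; by compatibility of $\leq_{pred}$ (Lemma \ref{pred=suc}), for every $a_*\in\alpha$ there is $b_*\in\beta$ with $a_*\leq_{pred}b_*$. Take $\gamma<_H\alpha$. Because $\leq_H$ is an order ($H$ convex), $\gamma<_H\alpha$ yields $c\in\gamma$, $a'\in\alpha$ with $c<a'$ (choose $c\leq a'$ from $\gamma\leq_H\alpha$; if $a'\leq c$ then $\alpha\leq_H\gamma$, forcing $\alpha=\gamma$, impossible). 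Picking $b'\in\beta$ with $a'\leq_{pred}b'$, from $c<a'$ we get $c<b'$, hence $\gamma\leq_H\beta$. \textbf{The one delicate point is upgrading this to $\gamma<_H\beta$.} Suppose $\gamma=\beta$; then $a'\notin\gamma$, so autonomy of the coset $\gamma$ applied to $c,b'\in\gamma$ converts $c\leq a'$, $a'\not\leq c$ into $b'\leq a'$, $a'\not\leq b'$, i.e.\ $b'<a'$; but then $a'\leq_{pred}b'$ together with $b'<a'$ forces $b'<b'$, which is absurd. Hence $\gamma\neq\beta$, so $\gamma<_H\beta$, and $\alpha\leq_{H pred}\beta$.

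For the case $H=K(G)$: here $K(G)$ is autonomous by Lemma \ref{lem:interval} and convex (an antichain when $\leq$ is an order, and in general a union of $\equiv$‑classes), so the previous item gives $\leq_{pred H}\subseteq\leq_{H pred}$; it remains to prove the reverse inclusion. Since $\leq_{pred}$ is compatible, its classes modulo $\equiv_{pred}$ are exactly the cosets of $K(G)$, so $\leq_{pred H}$ is the quotient order of $\leq_{pred}$: $\alpha\leq_{pred H}\beta$ iff $a\leq_{pred}b$ for all $a\in\alpha$, $b\in\beta$. So assume $\alpha\leq_{H pred}\beta$, fix $a\in\alpha$, $b\in\beta$, and let $z<a$. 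Then $z<_{pred}a$, so $z$ and $a$ lie in distinct cosets; hence $\bar z\leq_H\alpha$ and $\bar z\ne\alpha$, so $\bar z<_H\alpha$, and by hypothesis $\bar z<_H\beta$. As in the previous paragraph, choose $z'\in\bar z$, $b'\in\beta$ with $z'<b'$; autonomy of the coset $\bar z$ (applied to $z',z$, with $b'\notin\bar z$) carries $z'<b'$ to $z<b'$, and then autonomy of $\beta$ (applied to $b',b$, with $z\notin\beta$) carries $z<b'$ to $z<b$. Thus $z<a\Rightarrow z<b$, i.e.\ $a\leq_{pred}b$, so $\alpha\leq_{pred H}\beta$ and the two quasi‑orders coincide. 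Throughout, the recurring difficulty is the passage from a relation $\leq_H$ on the quotient to its strict part $<_H$: convexity is what keeps $\leq_H$ antisymmetric, and autonomy is what allows strict comparabilities to be transported between different representatives of a coset.
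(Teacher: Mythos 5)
Your proof is correct and follows essentially the same route as the paper: convexity is used exactly to secure antisymmetry of $\leq_H$, autonomy of the cosets drives both the lexicographic decomposition and the transport of strict comparabilities needed for $\leq_{pred H}\subseteq\leq_{H pred}$, and the argument that $\gamma\neq\beta$ (via $b'<a'$ contradicting $a'\leq_{pred}b'$) is the paper's own key step. The only cosmetic difference is in the last part, where the paper reduces to the class of $0$ by compatibility while you argue for arbitrary $\alpha,\beta$ using that the $\equiv_{pred}$-classes are the cosets of $K(G)$; both amount to the same idea.
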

\begin{proof}
Suppose that $\leq_H$ is a compatible order. Let $x,y,z$ with $x,y\in H$ and $x\leq z\leq y$. In $G/H$ we have $0\leq_H z+H\leq_H 0$. Since $\leq_H$ is an order, $z+H=0$ amounting to $z\in H$. Hence, $H$ is convex. For the converse, note first that from (a) of Lemma \ref{pred=suc},  $\overline {0}$, the equivalence class of $0$ w.r.t.  to $\leq$,  is a normal  subgroup of $H$.  Thus, if $H$ is convex it contains $\overline {0}$ (indeed, let $x\in \overline {0}$. We have $0\leq x\leq 0$. Since $0\in H$ and $H$ is convex, $x\in H$). More generally, let $\alpha\in G/H$. Suppose that for $0\in G/H$ we have $0\leq_H \alpha \leq_ H 0$. Then, there are $a\in \alpha$ and $b\in 0=H$ such that $0\leq a\leq b$. Since $H$ is convex, $a\in H$ hence $\alpha=0$ as required. If $H$ is convex and autonomous, then being an autonomous subset in $(X,\leq)$, each coset is also an autonomous subset and is order-isomorphic to $H$, hence $(X,\leq)$ is isomorphic to the lexicographical sum of $H$ indexed by $G/H$.

Next, we prove that the quasi-order $\leq_{pred H}$ is included into  $\leq_{H pred}$. Let $\alpha, \beta \in G/H$; we set $\alpha <_H \beta$ if $\alpha \leq_H \beta$ and $\alpha\not\leq_H \beta$. Let $\alpha\leq _{predH}\beta$ and $\gamma <_H\alpha$. There are $c\in \gamma$ and $a\in \alpha$ such that $c<a$; also there is $b\in \beta$ such that $a\leq_{pred} b$. Since $a\leq_{pred}b$ we have $c<b$. We claim that $\gamma\not = \beta$ from which follows $\gamma <_H \beta$ and $\alpha\leq _{Hpred}\beta$. Indeed, otherwise $\gamma= \beta$. Since $c< a$ and each coset is an interval, we have $b<a$. Since $a\leq_{pred}b$ this yields $b<b$ which is impossible.

Now, suppose that $H=K(G)$. We claim that $H$ is convex and autonomous subset. According to Lemma \ref{lem:interval} it is autonomous. We show that it is convex. Let $y\leq x\leq 0$ with $y\in H$. If $0\leq x$ then $0\equiv x$ and therefore $x\in H$. Otherwise $x<0$. Since $y\in H$, $0\equiv_{pred} y$ hence $x< y$, contradicting $y\leq x$. Now, let $\alpha \in G/H$ be such that $0\leq_{H pred} \alpha$. We claim that $0\leq_{pred H}\alpha$. If $0=\alpha$ there is nothing to prove. Suppose $\alpha\not =0$. Pick $a\in \alpha$. We prove that $0\leq _{pred} a$ from which the claim follows. Let $x<0$. Since $H=K(G)$, $x\not \in H$ by Lemma \ref{lem:interval}. Hence, $x+H<0$. Since $0\leq_{H pred}\alpha$ we have $x+H<_{H} \alpha$. Hence $x<_Ha'$ for some $a'\in \alpha$. Since $H=K(G)$ each coset is an interval, hence $x<a$, proving that $0\leq _{pred} a$.
\end{proof}

\noindent \rem Note that in general $\leq_{H pred}$ is not included into $\leq_{H pred}$. For an example, let $G:=\ZZ \times \ZZ/2\ZZ$ be the direct product of the totally ordered group $\ZZ$ with $\ZZ/2\ZZ$ ordered by the equality relation. Let $H:= \ZZ \times \{0\}$. Then $\leq_H$ is the equality relation on $G/H$, hence $\leq_{H pred}$ is the complete relation on the quotient. On the other hand, $\leq_{pred}= \leq$ and $\leq_{predH}$ is the equality relation on the quotient.

A set of subsets that forms a totally ordered set under inclusion will be called a chain of subsets. For the following two lemmas we refer to  \cite{glassbook} Chapter 3 (Lemmas 3.1.1 and 3.1.2).

\begin{lemma}\label{unionconvex} If $G$ is a partially ordered group and $\{C_i : i \in I\}$ is a family of convex subgroups of $G$, then $\cap\{C_i : i \in I\}$ is a convex subgroup of G. If $\{C_i : i \in I\}$ is a chain of convex subgroups of $G$, then $\cup\{C_i : i \in I\}$ is a convex subgroup of $G$.
\end{lemma}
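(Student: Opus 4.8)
The plan is to prove each of the two assertions of Lemma~\ref{unionconvex} directly from the definitions, since these are elementary facts about ordered groups. Recall that a subgroup $C$ of $G$ is convex precisely when, for all $c, c' \in C$ and $z \in X$, the relation $c \le z \le c'$ forces $z \in C$; equivalently (using translations), $C$ is convex iff $0 \le z \le c$ with $c \in C$ implies $z \in C$.

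For the intersection statement, first I would note that an arbitrary intersection of subgroups is a subgroup, so only convexity needs checking. Let $D := \bigcap_{i \in I} C_i$ and suppose $d \le z \le d'$ with $d, d' \in D$. Then for each $i \in I$ we have $d, d' \in C_i$, and since $C_i$ is convex, $z \in C_i$. As this holds for every $i$, we get $z \in D$, so $D$ is convex. (If $I = \varnothing$ one interprets the intersection as $G$ itself, which is trivially a convex subgroup.)

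For the union statement, the hypothesis that $\{C_i : i \in I\}$ is a chain under inclusion is what makes the argument work. First, $\bigcup_{i} C_i$ is a subgroup: given $a, b$ in the union, say $a \in C_i$ and $b \in C_j$, comparability gives $C_i \subseteq C_j$ or $C_j \subseteq C_i$, so both $a$ and $b$ lie in a common $C_k$, whence $a - b \in C_k \subseteq \bigcup_i C_i$; and $0$ lies in each $C_i$ so in the union (assuming $I \ne \varnothing$). For convexity, suppose $c \le z \le c'$ with $c, c'$ in the union; pick indices with $c \in C_i$, $c' \in C_j$, and using the chain condition find $k$ with $c, c' \in C_k$; convexity of $C_k$ yields $z \in C_k \subseteq \bigcup_i C_i$.

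There is essentially no obstacle here: the lemma is a routine consequence of the definition of convex subgroup, and the only point requiring the chain hypothesis (as opposed to an arbitrary union) is that a finite set of elements of the union can be captured inside a single member of the chain. Since the excerpt itself refers the reader to \cite{glassbook}, I would in fact just cite that reference and, if a proof is wanted, include the short argument above; I expect the authors simply invoke the reference rather than reproving it.
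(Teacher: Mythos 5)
Your proof is correct, and you anticipated the situation exactly: the paper gives no proof of this lemma, simply referring to Glass's book (Lemmas 3.1.1 and 3.1.2 of \cite{glassbook}). Your direct argument from the definitions is the standard one and is complete.
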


Let $G$ be a partially ordered group and $g \in G\setminus \{0\}$. Let $\mathcal{V}$ be the set of all convex subgroups $V$ of $G$ such that $g \not \in V$. By Lemma \ref{unionconvex} the union of any chain of subsets in $\mathcal{V}$ is itself in $\mathcal{V}$. Since $\{0\} \in \mathcal{V}$, we have maximal elements in $\mathcal{V}$ by Zorn's Lemma. Hence for each  $g\in G\setminus\{0\}$, there is a convex subgroup $C_g$ maximal not containing $g$; i.e.: the union of all convex subgroups not containing $g$. If $G$ is totally ordered, Zorn's Lemma is not required.

\begin{lemma}\label {largestconvex}
 If $G$ is a totally ordered group, the set of all convex subgroups of $G$ is a complete chain. Hence,  for every $g \in G\setminus \{0\}$ there is a largest convex subgroup  $C_g$ of $G$ which does not contain $g$.
\end{lemma}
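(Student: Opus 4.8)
The plan is to prove the two assertions in order: first that the convex subgroups of $G$ are linearly ordered by inclusion, then that this chain is complete, and finally to extract the existence of $C_g$.

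\emph{Step 1: comparability.} First I would show any two convex subgroups $C_1,C_2$ of $G$ are comparable. Suppose $C_1\not\subseteq C_2$ and pick $a\in C_1\setminus C_2$. Since $0\in C_2$ we have $a\neq 0$, and since convex subgroups are symmetric we may, after replacing $a$ by $-a$ if necessary, assume $0<a$ (in the total order on $G$). I claim $C_2\subseteq C_1$. Let $b\in C_2$; if $b=0$ it lies in $C_1$, and otherwise, replacing $b$ by $-b$ if needed, we may assume $0<b$. As $G$ is totally ordered, either $b<a$ or $a\le b$; the latter is impossible, for then $0\le a\le b$ with $b\in C_2$ convex would force $a\in C_2$, a contradiction. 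Hence $-a<b<a$ with $\pm a\in C_1$, so convexity of $C_1$ gives $b\in C_1$. Thus $C_2\subseteq C_1$, and the convex subgroups of $G$ form a chain under inclusion.

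\emph{Step 2: completeness.} Given any nonempty family $\{C_i\}_{i\in I}$ of convex subgroups, Step 1 makes it a chain of subsets, so Lemma \ref{unionconvex} shows both $\bigcup_i C_i$ and $\bigcap_i C_i$ are convex subgroups; the first is the least convex subgroup containing every $C_i$, the second the largest contained in every $C_i$. Hence the chain of convex subgroups of $G$, ordered by inclusion, admits suprema and infima of all subsets, i.e. it is a complete chain.

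\emph{Step 3: the subgroup $C_g$.} Fix $g\in G\setminus\{0\}$ and let $\mathcal V$ be the collection of convex subgroups of $G$ not containing $g$; it is nonempty since $\{0\}\in\mathcal V$, and it is a sub-chain of the complete chain from Step 2. Set $C_g:=\bigcup_{V\in\mathcal V}V$, which is a convex subgroup by Lemma \ref{unionconvex}. If $g\in C_g$, then $g$ would lie in some $V\in\mathcal V$, contradicting the definition of $\mathcal V$; so $g\notin C_g$, while by construction $C_g$ contains every convex subgroup missing $g$. Thus $C_g$ is the required largest convex subgroup not containing $g$; because the union of the chain $\mathcal V$ is itself visibly a member of $\mathcal V$, Zorn's Lemma is not needed in the totally ordered case. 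The only point demanding care is the sign bookkeeping in Step 1 so that the convexity of $C_1$ and of $C_2$ can each be invoked; once that is handled the rest is a routine application of Lemma \ref{unionconvex}.
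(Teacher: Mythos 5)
Your proof is correct and follows essentially the route the paper takes: the paper delegates the comparability of convex subgroups to Glass's book (Lemma 3.1.2) and, in the paragraph preceding the lemma, obtains $C_g$ exactly as you do in Step 3, as the union of all convex subgroups omitting $g$, noting that Zorn's Lemma is unnecessary in the totally ordered case. Your Step 1 supplies the standard sign-splitting argument that the paper leaves to the reference, and it is sound.
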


Let us recall that an ordered group is \emph{archimedean} if for all $x, y>0$ there is a nonnegative integer $n$ such that
$n x\not \leq y$. Note that for a totally ordered group this means $n x>y$. The  following characterisation of totally ordered archimedean groups is known as H\"older's theorem  \cite{holder}. For completeness, we indicate the scheme of the proof.
\begin{theorem}\label{archimedean} Let $G$ be a totally ordered group. The following propositions are equivalent.
\begin{enumerate}[(i)]
 \item $G$ is archimedean.
 \item $G$ is isomorphic as an ordered group to a subgroup of the additive group of the real numbers (in particular, $G$ is abelian).
 \item $G$ has no nontrivial convex subgroups.
\end{enumerate}
\end{theorem}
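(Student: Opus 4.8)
The plan is to establish $(i)\Leftrightarrow(iii)$ by direct manipulation of convex subgroups, then to prove the substantial implication $(i)\Rightarrow(ii)$ via the classical Hölder construction, and finally to observe that $(ii)\Rightarrow(i)$ is immediate, every subgroup of $\RR$ being archimedean and abelian. For $(i)\Rightarrow(iii)$: suppose $H$ is a convex subgroup with $H\neq\{0\}$ and fix $h\in H$ with $h>0$ (replace $h$ by $-h$ if necessary). Given any $g>0$, archimedeanness supplies $n\in\NN$ with $g<nh$; since $0$ and $nh$ lie in $H$, convexity forces $g\in H$, so $G^{+}\subseteq H$ and hence $H=G$. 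For $(iii)\Rightarrow(i)$ I argue contrapositively: if $G$ is not archimedean there are $a,b>0$ with $na\leq b$ for every $n\in\NN$; then, writing $|x|:=\max(x,-x)$, the set $H:=\{g\in G:|g|\leq na\text{ for some }n\in\NN\}$ is a subgroup (a routine computation bounds $|g+g'|$ by a suitable multiple of $a$ when $g,g'\in H$) and is convex, and it is nontrivial because $a\in H$ while $b\notin H$ --- for $b\leq na$ together with $(n+1)a\leq b$ would give $a\leq 0$. This contradicts $(iii)$.

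The core is $(i)\Rightarrow(ii)$. We may assume $G\neq\{0\}$ and fix $a>0$. For $g\in G$ put
\[\phi(g):=\sup\{\,p/q:\ p\in\ZZ,\ q\geq 1,\ pa\leq qg\,\}.\]
By archimedeanness this supremum is a finite real number, $\phi(a)=1$, $\phi$ is order-preserving, and $\phi(g)>0$ whenever $g>0$. The aim is to show that $\phi$ is an additive, order-preserving \emph{injection} of $G$ into $(\RR,+)$; this gives $(ii)$ at once and also yields the commutativity of $G$, since $\phi(g+h)=\phi(g)+\phi(h)=\phi(h)+\phi(g)=\phi(h+g)$ combined with injectivity forces $g+h=h+g$. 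Injectivity is the easy part once additivity is in hand: $\ker\phi$ is then a convex subgroup distinct from $G$ (as $\phi(a)=1$), hence equals $\{0\}$ by the implication $(i)\Rightarrow(iii)$ already proved; order-reflection then follows from injectivity together with monotonicity on the total order.

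The main obstacle is the additivity $\phi(g+h)=\phi(g)+\phi(h)$, since when $G$ is not abelian one cannot replace $q(g+h)$ by $qg+qh$. This is circumvented by an elementary ``bubble-sort'' estimate valid in any linearly ordered group: for $q\in\NN$ the element $q(g+h)=(g+h)+\dots+(g+h)$ lies between $qg+qh$ and $qh+qg$, because rewriting an adjacent block $h+g$ as $g+h$ inside a product changes its value monotonically (the direction being governed by the sign of $(h+g)-(g+h)$), and one passes from the word representing $q(g+h)$ to either $qg+qh$ or $qh+qg$ by finitely many such rewrites. Feeding this into a common-denominator rescaling of witnesses $p/q$ for $\phi(g)$ and $p'/q'$ for $\phi(h)$ shows, case by case, that $p/q+p'/q'$ witnesses a lower bound for $\phi(g+h)$, giving $\phi(g+h)\geq\phi(g)+\phi(h)$; the reverse inequality comes symmetrically from the dual description $\phi(x)=\inf\{\,p/q:\ q\geq 1,\ qx\leq pa\,\}$. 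I expect the verification of this sorting inequality, together with the case analysis in the additivity argument, to be the only genuinely delicate part; the remaining checks ($\phi$ well defined, monotone, strictly positive on the positive elements, and the properties of $\ker\phi$) are routine.
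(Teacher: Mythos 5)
Your proposal is correct, but it is organized quite differently from the paper's proof. The paper closes the cycle $(i)\Rightarrow(ii)\Rightarrow(iii)\Rightarrow(i)$: it dispatches the hard implication $(i)\Rightarrow(ii)$ by citing H\"older (via Glass's book), proves $(ii)\Rightarrow(iii)$ by the observation that a nontrivial subgroup of $\RR$ is either discrete or dense and so cannot be convex, and proves $(iii)\Rightarrow(i)$ exactly as you do --- your $H=\{g:|g|\leq na \text{ for some } n\}$ is the paper's ``smallest interval containing $x\ZZ$,'' i.e.\ the convex subgroup $\widetilde{\ZZ a}$. You instead establish $(i)\Leftrightarrow(iii)$ directly (your $(i)\Rightarrow(iii)$, using archimedeanness to show a nonzero convex subgroup absorbs all of $G^{+}$, replaces the paper's $(ii)\Rightarrow(iii)$) and then supply the H\"older construction yourself. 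What your route buys is self-containedness: the map $\phi(g)=\sup\{p/q: pa\leq qg\}$, the sorting inequality $qg+qh\leq q(g+h)\leq qh+qg$ (when $g+h\leq h+g$), the matching $\sup/\inf$ descriptions forced by totality of the order, and the use of the already-proved $(i)\Rightarrow(iii)$ to kill $\ker\phi$ are all correct and assemble into a complete proof of the embedding, with commutativity falling out of additivity plus injectivity as you say. What the paper's route buys is brevity (one citation) plus the independent, rather slick $(ii)\Rightarrow(iii)$ argument. The only places where your write-up is a sketch rather than a proof --- the bubble-sort lemma, the rescaling $pa\leq qg\Rightarrow (kp)a\leq(kq)g$ and its converse needed to see that membership depends only on the rational $p/q$, and the equality of the two descriptions of $\phi$ --- are all routine and check out; none hides a gap.
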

\begin{proof}
The implication $(i) \Rightarrow (ii)$ is due to H\"older, see \cite{glassbook} Theorem 4.a, page 56.\\
$(ii) \Rightarrow (iii)$ It is well known that every subgroup of $\RR$ is either discrete, that is it is of the form $x\ZZ$ for some $x\in \RR$, or is dense in $\RR$. Hence a nontrivial subgroup of $\RR$ cannot be convex.\\
$(iii) \Rightarrow (i)$ Suppose $G$ has no nontrivial convex subgroups and assume for a contradiction that it is not archimedean. There exists then $x,y\in X$ such that $n x<y$ for all $n\in \ZZ$. Let $H$ be the smallest interval of $G$ containing $x\ZZ$. Then $H$ is by definition convex. Moreover $H\neq \{0\}$ and $H\subseteq \{z\in X : -y\preceq z\preceq y\}$ since $n x<y$ for all $n\in \ZZ$ and therefore $H\neq G$. We now prove that $H$ is a subgroup contradicting our assumption. Clearly $0\in H$. Let $h,h'\in H$. By definition of $H$ there exist $n,m,n',m'\in \ZZ$ such that $nx<h<mx$ and $n'x<h'<m'x$. Therefore $(n-m')x<h-h'<(m-n')x$ proving that $h-h'\in H$ and hence $H$ is a subgroup of $G$. The proof of the theorem is now complete.
\end{proof}

\section{Normal subsets and proofs of Propositions  \ref{claim:weakorder}, \ref{prop:thresholdgroup} and Theorem \ref{claim:interval}}\label{section:basic}

\subsection{Normal subsets}\label{normal}
Let $G:= (X, +)$ be a group and let $A$ be a subset of $G$. For $u\in G$ define $u+A:= \{u+x: x\in A\}$. Similarly define $A+u$. We say that $A$ is \emph{normal} if for all $a\in A$ and $u\in G$ we have $-u+ a+ u\in A$. Incidently, $G$, $\{0\}$ and $\varnothing$ are normal.\\
\textbf{Fact 1:} $A$ is normal iff $-u+A+u=A$ for all $u\in G$.\\
 \emph{Proof of Fact 1:} Fact 1 is a well known, elementary fact of beginning group theory. We do not include its proof. \hfill $\Box$

We give below some relevant examples of normal sets.
\begin{enumerate}
 \item If $A$ is a singleton, say $A=\{a\}$, then $A$ is normal iff $a\in Z(G)$.\\

\noindent Now suppose $G$ equipped with a compatible partial order and $G^{+}:= \{x\in G: x\geq 0\}$. Then, as it is immediate to see:
\item $G^{+}$ is normal. \\
\noindent Next, let $\uparrow a$ be the final segment generated by $a$, that is, $\uparrow a:= \{x\in G: a\leq x\}$. Then, trivially, $\uparrow a= a+G^{+}=G^{+}+a$.

\item  $\uparrow a$ is normal iff $a\in Z(G)$. If $G$ is totally ordered, it suffices that $a$ commutes with all the elements of $G^{+}$. \\
     Indeed, suppose $\uparrow a$ is normal. Since $a\in \uparrow a$ and $\uparrow a$ is normal we have $-u+a+u\in \uparrow a$  that is $-u+a+u\geq a$ for every $u\in G$. Let $v=-u$, we have $-v+a-v\geq a$, that is $u+a-u\geq a$ and equivalently $a\geq -u+a+u$. Hence $-u+a+u= a$ and therefore $a$ and $u$ commute. Reciprocally, suppose that $a$ and $u$ commute for all $u$ in $G$. Then
$-u+ \uparrow a+ u= -u+ a+G^{+}+u=a-u+G^{+}+u= a+G^{+}=\uparrow a$ and we are done.

If $a$ commutes with an element $u$ it commutes with $-u$. If $G$ is totally ordered, then every nonzero element is either positive or negative. Hence, if $a$ commutes with all elements of $G^{+}$ it commutes with all elements of $G^{-}:=-G^{+}$, hence with all elements of $G$.

\item Let $inc(0)$ be the set of elements incomparable to $0$. Then $inc(0)$, $I:=inc(0) \cup \{0\}$ and $K(G)$ are convex subsets of $(X, \leq)$ and normal subsets of $G$. Furthermore, $I$ is a convex subset of $(X, \leq_{pred})$.

The first assertion is obvious. For the second, let $u\leq_{pred} g \leq_{pred} v$ with $u, v\in I$. We prove that $g\in I$. Suppose that $g\not\in I$. Since $K(G)\subseteq I$,  $g\not \in K(G)$ and either $0< g$ or $g< 0$. Suppose $0<g$ then from $g\leq_{pred} v$ we get $0<v$ and hence $v\not \equiv_{pred} 0$. Thus $v\not \in I$, a contradiction. If $g<0$, then using the fact that $\leq_{succ}= \leq_{pred}$ we get a contradiction too.
\end{enumerate}


\begin{lemma}\label{lem:semiorder}Let $\preceq$ be a compatible total quasi-order on a group $G=(X, +)$ and let $F$ be a final segment of $(X, \preceq)$ not containing $0$. Define a binary relation $\leq$ on $G$ as follows:
\[x\leq y \mbox{ if } x=y \mbox{ or } y-x\in F.\] Then:
\begin{enumerate}[(a)]
\item $\leq$ is an order.
\item $\preceq$ is an extension of $\leq$ and both $\leq_{pred}$ and $\leq_{succ}$ extend  $\preceq$.
\item $\leq$ is a semiorder.
\item $\leq$ is compatible iff $F$ is normal.

 \end{enumerate}
\end{lemma}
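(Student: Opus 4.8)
The plan is to verify the four assertions in a sequence that reuses earlier parts. For (a), I would check reflexivity (immediate from the clause $x=y$), antisymmetry, and transitivity. Antisymmetry: if $x\leq y$ and $y\leq x$ with $x\neq y$, then $y-x\in F$ and $x-y\in F$; since $\preceq$ is total, we may assume $y-x\preceq x-y$, and then applying the translation that adds $x-y$ on the left (compatibility of $\preceq$) gives $y-x+(x-y)\preceq 0$, i.e. $0\preceq $ some element of... more carefully: from $y-x\in F$ and $F$ a final segment, together with $x-y\preceq y-x$ (the other case), one gets $x-y\in F$ as well, and then I must derive $0\in F$, a contradiction. The cleanest route: $0 = (y-x)+(x-y)$; since $\preceq$ is compatible and $0\preceq y-x$ is false in general — instead I note both $y-x$ and $x-y$ lie in $F$, and a final segment not containing $0$ cannot contain both an element and ``its negative'' once one observes that $F$ being a final segment of a total quasi-order forces: if $a\in F$ and $-a\in F$ then, since one of $a\preceq -a$ or $-a\preceq a$ holds, say $a\preceq -a$, adding $a$ gives $2a\preceq 0$... this still needs care, so let me instead argue transitivity first and get antisymmetry as a consequence with $3$ elements, or simply: since $\preceq$ extends (to be shown in (b)) and is a quasi-order, the strict part is irreflexive, and $x<y<x$ is impossible. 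I will organize (a) and (b) together. Transitivity: if $x\leq y\leq z$ with all distinct, then $y-x\in F$ and $z-y\in F$; write $z-x=(z-y)+(y-x)$; since $F$ is a final segment and $y-x\in F$ while $0\preceq z-y$ may fail, I use instead that $z-x\succeq z-y$ (because $y-x\in F$ means $y-x$ is ``above'' $0$ in the sense $0\preceq y-x$ — wait, $0\notin F$, but $F$ a final segment means every element $\preceq$-above an element of $F$ is in $F$; it does NOT mean elements of $F$ are $\succeq 0$). Here is the key observation I will isolate as a sublemma: since $\preceq$ is total and $F$ is a final segment with $0\notin F$, we have $a\preceq b$ and $a\in F$ $\Rightarrow b\in F$, and also $0\notin F$ forces $F\subseteq\{x:0\prec x\}$ is FALSE in general — but actually it IS true: if $a\in F$ and $a\preceq 0$ then $0\in F$ since $F$ is a final segment, contradiction; hence $F\subseteq\{x : 0\prec x\}$. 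Good — with this, transitivity follows: $y-x,z-y\in F$, so $0\prec y-x$, hence adding $z-y$ (compatibility) $z-y\prec z-x$, and since $z-y\in F$ and $F$ is a final segment, $z-x\in F$. Antisymmetry then: $y-x,x-y\in F$ gives $0\prec y-x$ and $0\prec x-y$, so $0\prec (y-x)+(x-y)=0$, contradiction.

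For (b): $\preceq$ extends $\leq$ because $x<y\Rightarrow y-x\in F\Rightarrow 0\prec y-x\Rightarrow x\prec y$. To see $\leq_{pred}$ and $\leq_{succ}$ extend $\preceq$: suppose $x\prec y$; I must show $x\leq_{pred}y$, i.e. $z<x\Rightarrow z<y$ for all $z$. If $z<x$ then $x-z\in F$, and since $x\prec y$, compatibility gives $x-z\prec y-z$, so $y-z\in F$ (final segment), i.e. $z<y$. The argument for $\leq_{succ}$ is symmetric. (This already forces $\leq_{pred}$, $\leq_{succ}$ to be total quasi-orders since they extend the total quasi-order $\preceq$.)

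For (c): by Lemma \ref{critical} (or directly Lemma \ref{intervalorder} plus Lemma \ref{pred=suc}), it suffices that $\leq_{pred}$ is a total quasi-order; but we just showed $\leq_{pred}\supseteq\preceq$ and $\preceq$ is total, so $\leq_{pred}$ is total. Hence $\leq$ is an interval order, and by Corollary \ref{cor:semiorder} (once we know $\leq$ is compatible — but $\leq$ is compatible only under (d)!), so I should instead invoke Lemma \ref{intervalorder} to get ``interval order'', and then separately check no $3\oplus 1$ embeds, OR just cite Lemma \ref{critical} in the form: $\leq_{pred}$ and $\leq_{succ}$ both total and, since they each extend $\preceq$ which is total, their intersection contains $\preceq$ hence is total — wait, intersection of two total quasi-orders need not be total, but here both extend the \emph{same} total quasi-order $\preceq$, and in fact I claim $\leq_{pred}\cap\leq_{succ}$ is total because for any $x,y$, $\preceq$ relates them, say $x\preceq y$, and then $x\leq_{pred}y$ and $x\leq_{succ}y$ both hold by (b). So the intersection is total, and Lemma \ref{critical} gives that $\leq$ is a semiorder. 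For (d): compatibility of $\leq$ means $x<y\iff 0<y-x\iff 0<(-x)+y\iff -y<-x$ (cf. \eqref{equ:preserve}); $0<w\iff w\in F$, so compatibility amounts to $F$ being closed under conjugation, i.e. $w\in F\iff -u+w+u\in F$ for all $u$, which is exactly normality of $F$ (using Fact 1 of Subsection \ref{normal}). Conversely if $F$ is normal the same chain of equivalences shows $\leq$ is compatible.

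The main obstacle I anticipate is purely bookkeeping: pinning down the sublemma $F\subseteq\{x:0\prec x\}$ and being careful that ``final segment of $(X,\preceq)$'' is with respect to a quasi-order (so $0\notin F$ together with the final-segment property does the work), and making sure the semiorder conclusion in (c) does not secretly assume the compatibility established only in (d) — which is why I route (c) through Lemma \ref{critical} applied to the abstract order $\leq$ without needing it to be a group order.
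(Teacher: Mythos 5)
Your proposal is correct and follows essentially the same route as the paper's proof: the key sublemma that $F\subseteq\{x: 0\prec x\}$ (which the paper phrases as ``$F$ is a final segment of $G^+$, hence a cone''), the direct verification of (a) and (b) via the final-segment property, the appeal to Lemma \ref{critical} for (c) using that $\leq_{pred}\cap\leq_{succ}$ contains the total quasi-order $\preceq$, and the identification of left-translation invariance with normality of $F$ for (d). Once the exploratory detours are stripped away, the surviving argument matches the paper's step for step.
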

\begin{proof} We observe that since $\preceq $ is total, $F$ is a final segment of $G^+:=\{x\in X: 0 \preceq x\}$  equipped with the compatible quasi-order $\preceq$. Hence, $F$ is a cone, i.e. $x,y\in F$ implies $x+y\in F$. Indeed, if $x\in F$ and $y\in F$ then since $0\preceq y$ and $\preceq$ is a compatible quasi-order, we have $x\preceq x+y$. Since $x\in F$ and $F$ is a final segment $x+y\in F$.

\noindent $(a)$ The fact that $\leq$ is reflexive follows from the definition. Next,
 $\leq$ is transitive. Indeed, let $x,y,z$ be such that $x\leq y \leq z$. If $x= y$ or $y=z$ then $x\leq z$. If not, then $y-x\in F$ and $z-y\in F$. Since $F$ is a cone, $z-x=z-y+y-x\in F$ hence $x\leq z$. Finally, $\leq$ is antisymmetric, indeed if $y-x\in F$ and $x-y\in F$, then since $F$ is a cone $0\in F$ which is impossible.

\noindent $(b)$ Let $x\leq y$. If $x=y$ then trivially, $x\preceq y$. Thus we may suppose $x<y$. In this case, $y-x\in F$ by definition of $\leq$. Since $0\prec y-x$ and $\preceq$ is compatible we have $x\prec y$, proving that $\preceq$ is an extension of $\leq$. Next we prove that $\leq_{pred}$ extends $\preceq$, that is, $x\preceq y$ implies $x\leq_{pred}y$. Let $z<x\preceq y$. Our aim is to show that $z<y$, that is, $y-z\in F$. As $z<x\preceq y$ and $\preceq$ is compatible we have $x-z\in F$ and $y-x\succeq 0$ furthermore $x-z\preceq y-x+x-z= y-z$. Since $F$ is a final segment $y-z\in F$ as required. Similarly, suppose that $y<z$. We have $z-y\in F$ and since $x\prec y$, $y-x\succeq 0$ hence $z-y\preceq z-y + y-x=z-x$. Since $F$ is a final segment $z-x\in F$ as required.

\noindent $(c)$ Since $\preceq$ is a total quasi-order and, according to item $(b)$, both $\leq_{pred}$ and $\leq_{succ}$ extend it, the intersection $\leq_{pred}\cap \leq_{succ}$ is a total quasi-order. Hence, according to Lemma \ref{critical}, $\leq$ is a semiorder.

 \noindent $(d)$ Let $x,y$ be such that $x\leq y$. If $z$ is any element of $G$ then $y+z-(x+z)= y-x$, hence $x+z\leq y+z$. On an other hand, $z+y-(z+x)= z+y-x-z$. Hence, if $F$ is normal and $y-x\in F$ then $z+y-(z+x)\in F$, proving that $z+x\leq z+y$. Thus $\leq$ is compatible. Suppose now that $\leq$ is compatible and let $y\in F$ and $x\in G$. From the definition of $\leq$ we have $F=\{t\in X : t>0\}$. Hence, $0<y$ and therefore $x<y+x$ since $\leq$ is compatible. But then $0<-x+y+x$, that is, $-x+y+x\in F$ proving that $F$ is normal.
\end{proof}

%
\begin{lemma}\label{lem:propsemiorder}Let $\preceq$ be a compatible total order on a group $G:=(X, +)$, let $F$ be a normal final segment of this order not containing $0$ and let $\leq_F$ be defined by $x\leq_F y$ if $x=y$ or $y-x\in F$. Then $\leq_F $ is a compatible semiorder and, for $G_F:= (X, +, \leq_F)$, $K(G_F)$ is the largest convex normal subgroup of $(X, +, \preceq)$ such that $I:= X\setminus (-F\cup F)$ is a union of cosets.
\end{lemma}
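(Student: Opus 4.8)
The first assertion requires almost no work. Since $\preceq$ is in particular a compatible total quasi-order and $F$ is a final segment not containing $0$, Lemma~\ref{lem:semiorder} applies directly and gives that $\leq_F$ is an order (part~(a)), that it is a semiorder (part~(c)), and that it is compatible precisely because $F$ is normal (part~(d)). Thus $G_F=(X,+,\leq_F)$ is an ordered group whose order is a semiorder, and the only remaining task is to identify $K(G_F)$.

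The plan for the second assertion is first to describe ${\leq_F}_{pred}$ explicitly. By Lemma~\ref{pred=suc}(c) this quasi-order is compatible, hence determined by its positive cone; unwinding the definition of $\leq_{pred}$ (writing $z<_F 0$ as $-z\in F$ and substituting $t=-z$) one checks that $0\,{\leq_F}_{pred}\,a$ if and only if $a+F\subseteq F$. Using compatibility once more, $a\,{\leq_F}_{pred}\,0$ amounts to $-a+F\subseteq F$, so that
\[K(G_F)=\{a\in X: a+F\subseteq F \text{ and } -a+F\subseteq F\}=\{a\in X: a+F=F\},\]
the setwise stabiliser of $F$ under translation; throughout, normality of $F$ lets us pass freely between $a+F$ and $F+a$, and hence between stabilising $F$ and stabilising $-F$. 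That $K(G_F)$ is a normal subgroup is then automatic, being the $\equiv$-class of $0$ for the compatible quasi-order ${\leq_F}_{pred}$ (Lemma~\ref{pred=suc}(a)), and its convexity in $(X,+,\preceq)$ follows because $K(G_F)$ is convex in $(X,+,{\leq_F}_{pred})$ by Lemma~\ref{lem:interval}, while ${\leq_F}_{pred}$ extends $\preceq$ by Lemma~\ref{lem:semiorder}(b), so a set convex for ${\leq_F}_{pred}$ is a fortiori convex for $\preceq$. Finally, since $K(G_F)$ stabilises both $F$ and $-F$ it stabilises $I=X\setminus(-F\cup F)$; that is $I+K(G_F)=I$, so $I$ is a union of cosets of $K(G_F)$.

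It remains to prove maximality: if $K$ is a convex normal subgroup of $(X,+,\preceq)$ for which $I$ is a union of cosets — equivalently $I+K=I$ — then $K\subseteq K(G_F)$. I would start from the partition $X=F\sqcup(-F)\sqcup I$ (valid because $\preceq$ is total and $0\notin F$, so $F$ consists of strictly $\preceq$-positive elements, $-F$ of strictly $\preceq$-negative ones, and these are disjoint) together with the observation $0\in I$. From $I+K=I$ and $0\in I$ we get $K=0+K\subseteq I$, so $K$ meets neither $F$ nor $-F$; moreover translation by any $k\in K$ is a bijection carrying $I$ onto $I$, hence carrying $F\cup(-F)$ onto $F\cup(-F)$. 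Now fix $k\in K$. Then $F+k$ is a final segment of $(X,\preceq)$ (a translate of one); it omits $0$, since $0\in F+k$ would force $-k\in F$, impossible as $-k\in K$; hence every element of $F+k$ is strictly $\preceq$-positive, and combining this with $F+k\subseteq F\cup(-F)$ and the disjointness of $-F$ from the positive part forces $F+k\subseteq F$. Applying the same argument to $-k\in K$ yields $F+(-k)\subseteq F$, whence $F=(F+(-k))+k\subseteq F+k$, and therefore $F+k=F$, i.e.\ $k\in K(G_F)$. This gives $K\subseteq K(G_F)$, and since $K(G_F)$ is itself a convex normal subgroup of $(X,+,\preceq)$ for which $I$ is a union of cosets, it is the largest such.

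The genuinely delicate point is this maximality step, specifically ruling out that a translate $F+k$ of $F$ leaks partly into $-F$; this is exactly what the decomposition $X=F\sqcup(-F)\sqcup I$ and the asymmetry ``$F$ final, $-F$ initial, $0\in I$'' are there to prevent. Everything else is bookkeeping, the only recurring care being the systematic use of normality of $F$ to commute translations past $F$ in the non-abelian setting.
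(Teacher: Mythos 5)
Your proof is correct, and for the substantive half of the lemma it takes a genuinely different route from the paper's. The paper identifies $K(G_F)$ order-theoretically: it invokes Lemma~\ref{lem:interval} to say that $K(G_F)\setminus\{0\}$ is the set of isolated points of $inc(0)=I$, deduces from this that $I$ is a union of cosets of $K(G_F)$, and for maximality argues that if $I$ is a union of cosets of $K$ then every element of $K$ is isolated in $I$, hence lies in $K(G_F)$. You instead compute $K(G_F)$ algebraically as the setwise stabiliser $\{a: a+F=F\}$ of the final segment (by unwinding $0\,{\leq_F}_{pred}\,a$ and $a\,{\leq_F}_{pred}\,0$), and run the maximality argument on translates of $F$ inside the partition $X=F\sqcup(-F)\sqcup I$, using that a translate of a final segment avoiding $0$ must consist of strictly $\preceq$-positive elements and so cannot leak into $-F$. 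Each approach has its merits: the paper's is shorter because it leans on the isolated-points description already proved in Lemma~\ref{lem:interval} (though it leaves the coset-stability steps rather implicit), while yours is more self-contained, makes the role of normality of $F$ explicit in the non-abelian setting, and as a small bonus shows that the maximality holds for any subgroup $K$ with $I+K=I$ --- convexity and normality of $K$ are not actually needed for the inclusion $K\subseteq K(G_F)$. Your identification of the convexity of $K(G_F)$ in $(X,+,\preceq)$ via ``convex for the larger quasi-order ${\leq_F}_{pred}$ implies convex for the smaller $\preceq$'' is also a clean way to dispatch a point the paper does not dwell on.
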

\begin{proof} The first part follows from $(d)$ of Lemma \ref{lem:semiorder}. Next, $I$ is $inc(0)$ in $G_F$. Since $K(G_F)\setminus \{0\}$ is the set of isolated points of $inc(0)=I$ we infer that for all $x\in inc(0)$, $x+K(G_F) \subseteq I$ and hence $I$ is a union of cosets of $K(G_F)$. Finally, let $K$ be a convex normal subgroup such that $I$ is a union of cosets of $K$. Then every element of $K$ is isolated in $I$ and hence $K \subseteq K(G)$.
\end{proof}

%

\subsection{Proof of Proposition \ref{claim:weakorder}}\label{subsection:claim:weakorder}
\begin{proof}
$(i)\Longleftrightarrow (ii)$ The order on $G$ is a weak order if and only if $1\oplus 2$ does not embed into $G$. Since translations preserve the ordering, this amounts to the fact that the $2$-element chain does not embed into $inc(0)$, that is $inc(0)$ is an antichain.\\
$(ii)\Longleftrightarrow (iii)$ Let $a,b \in inc(0)\cup \{0\}$. If $inc(0)$ is an antichain, then so is $inc(0) \cup \{0\}$, hence $a$ and $b$ are incomparable and therefore $a-b\in inc(0)\cup \{0\}$. This proves that $inc(0)\cup \{0\}$ is a subgroup.\\
Conversely, suppose that $inc(0)\cup \{0\}$ is a subgroup. Let $x,y$ be two distinct elements of $inc(0)\cup \{0\}
$ then $x-y\in inc(0)\cup \{0\}$. Since,  in fact, $x-y\in inc(0)$ it follows that $x$ and $y$ are incomparable.\\
Next we suppose that any of the above conditions (i), (ii) or (iii) hold. Then $I=inc(0)\cup \{0\}$ is a normal subset of $G$, that is, for all $x\in I$ and for all $g\in G$ we have $g+x-g\in I$. Indeed, $x$ is incomparable to $0$ if and only if $g+x$ is incomparable to $g$ if and only if $g+x-g$ is incomparable to $g-g=0$. Hence $I$ is a normal subgroup of $G$. To prove the remaining statement we first prove that $I$ is an autonomous set, that is, every element not in $I$ is either larger than all elements of $I$, or is smaller than all elements of $I$ or is incomparable to all elements of $I$. Let $a,b\in inc(0)\cup \{0\}$ and let $x\not \in inc(0)\cup \{0\}$ be such that $x<a$. Then $x-b<a-b$. Since $inc(0)\cup \{0\}$ is a subgroup we infer that $a-b\in inc(0)\cup \{0\}$ and $x-b\not \in inc(0)\cup \{0\}$, that is, $x-b$ is comparable to $0$, or equivalently, $x$ is comparable to $b$. But then we must have $x<b$ because otherwise $b<x<a$ which is impossible. The required conclusion follows from Lemma \ref{convex}.
\end{proof}

\subsection{Proof of Proposition \ref{prop:thresholdgroup}}\label{sub:threshold}

Assertion $(1)$ is Lemma \ref{lem:propsemiorder}.
Let us prove Assertion $(2)$.
Suppose that $G$ is a threshold group. By definition, $\leq_{pred}$ is a total order. By $(c)$ of Lemma \ref{pred=suc} this total order is compatible. Let $I:=inc(0)\cup \{0\}$ and
$F:=\{x\in G : 0\leq_{pred} x\}\setminus I$. By $(d)$ of Lemma \ref{lem:semiorder} $F$ is a normal final segment; by Item (4) $I$ is a convex subset of $\leq_{pred}$. Hence, $F=\{x\in X: 0< x\}$. It follows that $x<y$ in $G$ if and only if $y-x\in F$ proving $(a)$. Suppose that there is a convex normal subgroup $K$ of $(X, \leq_{pred})$ such that $I$ is a union of cosets of $K$. We prove that $K\subseteq K(G)$, the equivalence class of $0$ with respect to $\leq_{pred}$. Since $G$ is a threshold group, $K(G)=\{0\}$, hence $K=\{0\}$ and $(b)$ holds. So, let $x\in K$. We prove that $x\equiv_{pred}0$, that is $0\leq_{pred}x$ and $x\leq_{pred}0$.  Let $z<0$. Then $z\in -F$. Since from our hypothesis on $K$, $-F$ is a union of cosets of $K$, the coset containing $z$, namely $z+K$, is included into $-F$, hence $z-x\in -F$ that $z<x$. This proves that $0\leq_{pred}x$. Let $z<x$. We have $z-x<0$ that is $z-x\in -F$. Since $z$ belongs to $z-x+K$, the coset containing $z-x$ and $-F$ is a union of cosets, $z\in -F$ hence $z<0$. This proves that $x\leq_{pred}0$.

Conversely, suppose that $\preceq$ is a compatible total order on $G$ and $F$ is a normal final segment of
$\{x\in G: 0\prec x\}$ satisfying conditions $(a)$ and $(b)$ of the proposition. From $(a)$ we deduce that $F=\{x\in X: 0< x\}$ and hence $I=inc(0)\cup \{0\}$. We now prove that $\leq_{pred}$ is an extension of $\preceq$. Indeed, let $x,y$ be such that $x\preceq y$ and let $a<x$. Then $x-a\in F$ and $x-a\preceq y-a$. Hence, $y-a\in F$ and therefore $a<y$. The group $K(G)$, equivalence class of $0$ with respect to $\leq_{pred}$, is a normal subgroup of $G$. Since this is an antichain of $(X, \leq)$ it is contained in $I$. Since $\leq_{pred}$ is an extension of $\preceq$ is a convex subset of $I$ with respect to the order $\preceq$. We claim that $I$ is a union of cosets of $K(G)$. According to condition $(b)$, we will have $K(G)=0$, proving that $\leq_{pred}$ is a total order and hence that $G$ is a threshold group. Our claim amounts to the fact that every coset of $K(G)$ is either a subset of $-F$, or $I$, or $F$. If this fact does not hold, then there exists some coset which meets $-F$ and $I$ or meets $I$ and $F$. Without loss of generality, we may suppose that the latter holds. Then there are $x\in I$ and $y\in F$ such that $x\equiv_{pred}y$. This is impossible, indeed, since $y\in F$ we have $0<y$; since $x\equiv_{pred}y$, it follows $0<x$ which is impossible since $x\not \in F$. \hfill $\Box$

\subsection{Proof of Theorem \ref{claim:interval}}
Let $G:=(X,+,\leq)$ be an ordered group. The fact that $(X,\leq)$ is a semiorder if and only if $inc(0)$ is bipartite follows immediately from the case $n=3$ of Theorem \ref{2+2,3+1} and the fact that translations preserve the order. Indeed, $inc(0)$ is bipartite iff the $3$-element chain $3$ does not embed into $inc(0)$. Since translations on $G$ preserve the order, this latter condition amounts to the fact that $1\oplus 3$ does not embed into $G$. According to the case $n=3$ of Theorem \ref{2+2,3+1}, this is equivalent to $2\oplus 2$ does not embed into $G$. Hence $inc(0)$ is bipartite iff $1\oplus 3$ and $1\oplus 3$ do not embed into $G$, that is the order on $G$ is a semiorder.\\
$(i)\Rightarrow (iii)$ Suppose that $G:=(X,+,\leq)$ is a threshold group. Suppose that $inc(0)$ is not prime. Let $A$ be a nontrivial autonomous subset in $inc(0)$. We claim that if  $x,x'$ are two distinct elements of $A$ then $x\equiv_{pred} x'$ and thus $x-x'\in K(G)$. Since $G$ is a threshold group, $K(G)= \{0\}$, hence $x=x'$, a contradiction. In order to prove this claim, we observe first that since $inc(0)$ is bipartite, the set $inc(0)^{-}:= \{y\in inc(0): y\leq_{pred} 0\}$ is an antichain of $(X, \leq)$ (if this set contains two elements $y'$, $y''$ with $y'<y''$ then, since $y''\leq_{pred}0$, we have $y<0$, which is impossible); similarly, the set $inc(0)^{+}:= \{y\in inc(0): 0\leq_{pred} y\}$ is an antichain. Next, either $inc(0)^{-}$ or $inc(0)^{+}$ contains $A$. Indeed, the poset $inc(0)$ is bipartite and, since $K(G)= \{0\}$, no element of $inc(0)$ is isolated (i.e. every element $x$ of $inc(0)$ is above or below some element of $inc(0)$) (Lemma \ref{lem:interval}). Thus,  the comparability graph of $inc(0)$ is connected (otherwise, it would have at least two connected component; each one being nontrivial, this graph would contain the direct sum of two edges, hence $inc(0)$ would contain $2\oplus 2$ contradicting the fact the order on $G$ is a semiorder). The connectedness of this graph implies that $A$ cannot contain a vertex in each part of the bipartition (if there are $u',u''\in A$ with $u'\leq_{pred} 0\leq_{pred} u''$, then every $v'\in inc(0)^{-}\setminus A$ is incomparable to $u'$ hence incomparable to all the elements of $A$ since $A$ is autonomous; similarly, every $v'\in inc(0)^{-}\setminus A$ is incomparable to all elements of $A$; since $A$ is nontrivial it is distinct of $inc(0)$, hence we may suppose that there is some $v'\in inc(0)^{-}\setminus A$. But there is no path in the comparability graph connecting $v'$ to $u''$). Finally, suppose that $A\subseteq inc(0)^{+}$. Since $\leq_{pred}$ is a total order, with no loss of generality we may suppose $x\leq_{pred} x'$. We claim that $x'\leq_{pred} x$. For that, let $y<x'$. If $y\in inc(0)$ then $y <x$ since $A$ is autonomous; if $y\not \in inc(0)$ then $y<0$. Since $0\leq_{pred} x$, $y<x$. This proves our claim. If $A\subseteq inc(0)^{-}$, we show that $x\equiv_{succ} x'$; since $\equiv_{succ}\, =\, \equiv_{pred}$ we obtain also that $x'\equiv_{pred} x$. Hence, $inc(0)$ is prime.\\
\noindent $(iii) \Rightarrow (ii)$ Obvious.\\
\noindent $(ii)\Rightarrow (i)$ If $inc(0)$ is empty, $G$ is totally ordered, hence it is a threshold order. Suppose that $inc(0)$ is nonempty. Since it is bipartite, the order on $G$ is a semiorder. Since $inc(0)$ has no isolated vertex, Lemma \ref{lem:interval} asserts that  $K(G)= \{0\}$, hence this semiorder is a threshold order.\hfill $\Box$

\section{Some properties of the groups $I(G)(0)$ and $A(G)$ and proofs  of
Theorems~\ref{thm:three groups} to \ref{convexsubgroup}} \label {section:properties of the groups}

The decomposition of the incomparability graph of a poset into connected components is expressed in the following lemma which belongs to the folklore of the theory of ordered sets.
\begin{lemma}\label{interval} If $P:= (X, \leq)$ is a poset, the order on $P$ induces a total order on the set $P/\overline \nsim$ of connected components of $Inc(P)$ and $P$ is the lexicographical sum of these components indexed by the chain $P/\overline \nsim$. In particular, if $\preceq$ is a total order extending the order $\leq$ of $P$, each connected component $A$ of $Inc(P)$ is an interval of the chain $(X, \preceq)$.
\end{lemma}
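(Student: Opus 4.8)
The plan is to unpack the definition of connected component of $Inc(P)$ and show the quotient relation behaves well. First I would observe that the connected components of $Inc(P)$ partition $X$; write $\overline\nsim$ for the equivalence relation "$x$ and $y$ lie in the same component", and let $P/\overline\nsim$ be the set of classes. The key claim is that whenever $A \neq B$ are distinct components and $a \in A$, $b \in B$, the elements $a$ and $b$ are comparable, and moreover the direction of the comparison depends only on $A$ and $B$, not on the representatives. Comparability is immediate: if $a \nsim b$ then $a$ and $b$ would be adjacent in $Inc(P)$, forcing $A = B$. For well-definedness of the direction, suppose $a < b$ and take another $a' \in A$; since $a,a'$ are joined by a path in $Inc(P)$, it suffices to treat the case where $a \nsim a'$ (an edge), and then to show $a' < b$. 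Here $a' \nsim a$, $a < b$, and $a' \sim b$ (by the comparability just proved, as $a' \in A$, $b \in B$, $A \neq B$); if we had $b < a'$ then $b < a'$ and $a < b$ give $a < a'$ by transitivity, contradicting $a \nsim a'$; hence $a' < b$. Iterating along a path gives $a'' < b$ for every $a'' \in A$, and the symmetric argument varying the representative in $B$ shows the relation "$A$ lies below $B$" is well defined.

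Next I would check that this induced relation on $P/\overline\nsim$, say $A \sqsubseteq B$ iff $A = B$ or $a < b$ for some (equivalently all) $a \in A$, $b \in B$, is a total order. Totality is exactly the comparability claim above together with reflexivity. Antisymmetry: if $A \sqsubset B$ and $B \sqsubset A$ with $A \neq B$, pick $a \in A$, $b \in B$; then $a < b$ and $b < a$, so $a < a$, impossible. Transitivity: if $A \sqsubset B \sqsubset C$ with all three distinct, pick representatives and use transitivity of $<$ to get $a < c$, hence $A \sqsubset C$; the degenerate cases where two of $A,B,C$ coincide are trivial. So $(P/\overline\nsim, \sqsubseteq)$ is a chain.

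It then follows directly that $P$ is the lexicographical sum of its components indexed by this chain: for $x, y \in X$ with $x$ in component $A$ and $y$ in component $B$, either $A = B$ and $x \leq y$ is decided inside the component $P\!\restriction\!A$, or $A \neq B$ and, by the claim, $x < y$ precisely when $A \sqsubset B$ and $y < x$ precisely when $B \sqsubset A$ — which is exactly clauses (a) and (b) of the definition of lexicographical sum. For the last sentence, let $\preceq$ be any total order extending $\leq$. Given a component $A$, I must show $A$ is convex in $(X, \preceq)$, i.e. if $a, a' \in A$ and $a \preceq z \preceq a'$ then $z \in A$. Suppose not; let $B \neq A$ be the component of $z$. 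By the claim $A$ and $B$ are $\sqsubseteq$-comparable, so either $b < a''$ for all $a'' \in A$, $b \in B$, or $a'' < b$ for all such; in the first case $z < a$, contradicting $a \preceq z$ (since $\preceq$ extends $\leq$); in the second case $a' < z$, contradicting $z \preceq a'$. Hence $A$ is an interval of the chain $(X, \preceq)$, completing the proof.

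The main obstacle is the well-definedness of the "direction" of comparison between two distinct components — the path-induction argument showing $a < b$ for one representative $a \in A$ forces $a' < b$ for every $a' \in A$. Everything else is routine transitivity bookkeeping.
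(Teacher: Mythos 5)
Your proof is correct and complete; the paper itself gives no argument for this lemma, citing it as folklore, and your proof is the standard one. The key point you isolate --- that the direction of comparison between two distinct components is independent of the representatives, proved by induction along a path in $Inc(P)$ --- is exactly the content that makes the lemma work, and the remaining verifications (total order on the quotient, lexicographical sum, convexity of components in any linear extension) are carried out correctly.
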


We denote by $\omega$ the chain of nonnegative integers and by $\omega^*$ the chain of negative integers.

\begin{lemma}\label{lem2:6} If $P$ is a semiorder and $A$ is a connected component of $Inc(P)$ then $\omega+1$ and $1+\omega^*$ do not embed into $(A, \leq)$; equivalently, chains of $(A, \leq)$ embed into $\ZZ$.
\end{lemma}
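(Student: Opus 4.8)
The plan is to show that if $\omega+1$ embeds into the connected component $A$, then a copy of $3\oplus 1$ (or $2\oplus 2$) appears, contradicting the hypothesis that $P$ is a semiorder. So suppose $\{a_0 < a_1 < a_2 < \cdots\} \cup \{b\}$ is a copy of $\omega+1$ inside $A$, where $b$ is the top element: thus $a_n < b$ for all $n$, and $a_0 < a_1 < a_2 < \cdots$ is an infinite ascending chain. The first step is to use the characterization of semiorders from Lemma \ref{critical}: for every pair of incomparable elements, one of the two orderings of the pair is critical. Since $A$ is a connected component of $Inc(P)$ of size at least two (it contains the infinite chain), elements of $A$ participate in incomparabilities, which is where I would extract the contradiction.

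The key step is the following. Consider the element $a_1$. Because $P$ is a semiorder (no $3\oplus 1$), look at the chain $a_0 < a_1 < b$ together with any element $c$ incomparable to $a_1$ in $A$ — such a $c$ exists once we observe that $A$ being a connected nontrivial component of $Inc(P)$ forces each of its elements to be incomparable to something in $A$. If $c \nsim a_1$, then $\{a_0, a_1, b, c\}$ would be a copy of $3\oplus 1$ unless $c$ is comparable to $a_0$ or to $b$. Pushing this along the infinite chain: for the pair $(a_0, c)$ or $(c, a_0)$, and similarly $(b,c)$ or $(c,b)$, one must be critical by Lemma \ref{critical}; tracing the definition of critical pair ($D(x)\subseteq D(y)$ and $U(y)\subseteq U(x)$) against the ascending chain $a_0 < a_1 < a_2 < \cdots$ yields that $c$ is comparable to cofinitely many $a_n$ from one side, and then $c$ together with three consecutive $a_n$'s, or $c, a_n, a_{n+1}$ together with $b$, produces a forbidden $2\oplus 2$ or $3\oplus 1$. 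The symmetric argument with $1+\omega^*$ (an infinite descending chain with a bottom element adjoined) is handled dually, using $\leq_{succ}$ in place of $\leq_{pred}$, or simply by passing to the dual poset $P^d$, which is again a semiorder with the same incomparability graph.

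The main obstacle, and the place where care is needed, is establishing that every element of a connected nontrivial component $A$ of $Inc(P)$ is genuinely incomparable to another element of $A$ — i.e. that $A$ has no isolated vertex once $|A|\geq 2$ — and then leveraging connectedness to locate an incomparability that interacts with the infinite chain. This is immediate from the definition of connected component (an isolated vertex is its own component), so the real work is purely combinatorial: carefully checking that the witness $c$ cannot be comparable to the whole tail of the chain on both sides without creating $2\oplus 2$, and cannot be comparable on one side to only a finite initial segment without creating $3\oplus 1$. The final clause — that chains of $(A,\leq)$ embed into $\ZZ$ — then follows formally: a linear order into which neither $\omega+1$ nor $1+\omega^*$ embeds, and which also (being inside a semiorder, hence an interval order by Corollary \ref{cor:semiorder}'s ambient setting) embeds no uncountable or densely-ordered configuration obstructing discreteness, must be order-isomorphic to a subchain of $\ZZ$; I would invoke the standard fact that a linear order embedding neither $\omega+1$ nor $1+\omega^*$ is a sub-order of $\ZZ$.
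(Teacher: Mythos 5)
There is a genuine gap at the central step. You extract a single witness $c$ incomparable to $a_1$ and claim that tracing criticality along the chain forces a copy of $2\oplus 2$ or $3\oplus 1$. It does not. What the absence of $3\oplus 1$ and $2\oplus 2$ actually gives is that the set of elements of the chain $a_0<a_1<\cdots<b$ incomparable to $c$ is a convex subset of that chain with at most two elements, and hence that $c$ lies below $a_n$ for all $n\geq 3$ and below $b$; no forbidden configuration follows from this. A concrete test case: let $P$ be the lexicographical sum, indexed by the chain $\omega+1$, of two-element antichains. This is a weak order, hence a semiorder; it contains a copy of $\omega+1$ (one point from each antichain); every element has an incomparable partner; and neither $2\oplus2$ nor $3\oplus1$ embeds. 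Your argument as written would run verbatim on this $P$ and purport to reach a contradiction. The lemma is of course not violated here, because the copy of $\omega+1$ does not lie in a single connected component of $Inc(P)$ --- each component is a two-element antichain. This pinpoints what is missing: you only use that no vertex of $A$ is isolated, whereas the hypothesis that must be exploited is that $a_0$ and $b$ are joined by a \emph{path} in $Inc(P)$, i.e.\ that $d_{Inc(P)}(a_0,b)$ is finite.

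The paper's proof does exactly this. It takes a linear order $\preceq$ refining the total quasi-order $\leq_{pred}\cap\leq_{succ}$ (total by Lemma \ref{critical}), observes that if $x\prec y$ and $x\nsim y$ then the whole interval $[x,y]_{\preceq}$ is a $\leq$-antichain, and then proves by induction on $n=d_{Inc(P)}(x,y)$ that every $\leq$-chain contained in $[x,y]_{\preceq}$ has at most $n$ elements. Since $a_0,b\in A$ are at finite distance in $Inc(P)$ while $[a_0,b]_{\preceq}$ contains the infinite chain $a_0<a_1<\cdots$, this is a contradiction. If you want to keep the direct forbidden-configuration flavour, you would have to induct along a shortest path $a_0=x_0\nsim x_1\nsim\cdots\nsim x_k=b$, showing that each step can climb only boundedly far up the chain; that is the paper's distance argument in disguise. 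Your final reduction (a linear order embedding neither $\omega+1$ nor $1+\omega^*$ embeds into $\ZZ$) is correct and is also how the paper concludes.
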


\begin{proof} Since $P$ is a semiorder, the intersection of $\leq_{pred}$ and $\leq_{succ}$ is a total quasi-order (Lemma \ref{critical}). Let $\preceq$ be a total order included into $\leq_{pred}$ and $\leq_{succ}$. According to implication (\ref{equa:1}) of Subsection \ref{succ and pred}, $\leq_{pred}$ and $\leq_{succ}$ extend $\leq$. Hence $\preceq$ extend $\leq$. From Lemma \ref{interval},
$ A$ is an interval of $(P, \preceq)$.
\begin{claim} \label{claim: length1}
If $x\prec y$ and if $x$ is incomparable to $y$ with respect to $\leq$ then $[x, y]_{\preceq}:=\{z : x\preceq z \preceq y\}$ is an antichain
\end{claim}
\noindent{\bf Proof of Claim \ref{claim: length1}.} Suppose for a contradiction that there are elements $a,b$ such that $x\preceq a<b\preceq y$. Since $a<b$ and $b\leq_{pred}y $ we  have $a <y$. Since $\leq_{pred}= \leq_{succ}$ and $x\leq_{succ} a$ we have $x<y$. A contradiction since $x$ and $y$ are incomparable.\hfill $\Box$

Since $A$ is connected, any two elements $x,y$ of $A$ are joined by a path. We denote by $d_{Inc(P)}(x,y)= n$ the length of the shortest path from $x$ to $y$. From Claim \ref{claim: length1} we obtain:

\begin{claim}\label{claim: length}For every non negative integer $n$, every $x,y\in A$ with $x\preceq y$ and $d_{Inc(P)}(x,y)= n$, the chains of $(P, \leq)$ included into $[x, y]_{\preceq}$ have at most $n$ elements.
\end{claim}
\noindent{\bf Proof of Claim \ref{claim: length}.} Induction on $n$. The case $n=0$ is obvious since $x=y$. The case $n=1$ is Claim \ref{claim: length1}.
Suppose $n\geq 2$. Then there is some $z\in ]x, y[_{\preceq}$ such that $x$ and $z$ are incomparable with respect to $\leq$ and $d_{Inc(P)}(z,y)= n-1$.\hfill $\Box$

Let $A$ be a connected component of $Inc(P)$. If $1+\omega^*$ is embeddable in $(A, \leq)$ then the interval in the chain $(P, \preceq)$ determined by the images of the extremal elements of $1+\omega*$ contains an infinite chain with respect to $\leq$. According to Lemma \ref{interval}, this interval is included into $A$. But according to Claim \ref{claim: length}, the chains have bounded size. Contradiction. The fact that $\omega+1$ and $1+\omega^*$ do not embed into a poset (semiordered or not) is equivalent to the fact that every chain of $(A, \leq)$ embeds into $\ZZ$ is well known.
\end{proof}

\begin{lemma}\label{lem5:4} Let $G=(X,+,\leq)$ be an ordered group. Then $I(G)(0)$ is a convex normal subgroup and $G/I(G)(0)$ is totally ordered.
\end{lemma}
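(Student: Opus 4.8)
The plan is to exploit that, since $\le$ is compatible, every left translation and every right translation of $G$ is an automorphism of the poset $(X,\le)$, hence of the incomparability graph $Inc(G)=(X,\nsim)$, and therefore permutes its connected components. Write $H:=I(G)(0)$ for the component of $0$.

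First I would check that $H$ is a subgroup. It contains $0$. If $x\in H$, fix a path $0=z_0\nsim z_1\nsim\dots\nsim z_n=x$ in $Inc(G)$; the right translation $t\mapsto t-x$ preserves $\nsim$ and turns this into a path $-x\nsim z_1-x\nsim\dots\nsim 0$, so $-x\in H$. If moreover $y\in H$ with a path $0\nsim w_1\nsim\dots\nsim w_m=y$, the left translation $t\mapsto x+t$ turns it into a path $x\nsim x+w_1\nsim\dots\nsim x+y$, which, concatenated with a path from $0$ to $x$, shows $x+y\in H$. Normality is the same device: for $g\in G$ the conjugation $t\mapsto -g+t+g$ is a composition of a left and a right translation, hence preserves $\nsim$, so it carries a path from $0$ to $x$ into a path from $0$ to $-g+x+g$, giving $-g+H+g\subseteq H$.

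Next I would identify the cosets of $H$ with the connected components of $Inc(G)$: for every $g\in G$, left translation by $g$ maps the component of $0$ bijectively onto the component of $g$, so the component of $g$ is exactly $g+H$ (and $g+H=H+g$ since $H$ is normal). Thus, as a set, $G/H$ is the set $P/\overline{\nsim}$ of connected components of $P:=(X,\le)$. Now Lemma \ref{interval} applies: $P$ is the lexicographical sum of its connected components indexed by the chain $P/\overline{\nsim}$, so each component --- in particular $H$ --- is autonomous in $(X,\le)$, and since $\le$ is an order it is therefore convex. Hence $H$ is a convex normal subgroup. Finally, by Lemma \ref{convex} the relation $\le_H$ is a compatible order on $G/H$; under the identification $G/H=P/\overline{\nsim}$ it agrees with the order that $\le$ induces on the connected components, which is total by Lemma \ref{interval}. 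Therefore $G/I(G)(0)$ is totally ordered.

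I do not expect a genuine obstacle here; the only points needing attention are keeping left and right translations straight in the path arguments and checking that the order $\le_H$ produced by Lemma \ref{convex} really is the chain order on components rather than merely contained in it --- but any two distinct components are comparable in a lexicographical sum over a chain, so $\le_H$ is already total, hence equal to that order.
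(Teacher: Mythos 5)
Your proposal is correct and follows essentially the same route as the paper: the paper observes that the incomparability relation is compatible with the group operation, hence so is its transitive closure $\overline{\nsim}$, which makes the class of $0$ a normal subgroup, and then invokes Lemma \ref{interval} for the lexicographical-sum decomposition and the total order on the quotient. Your translation-of-paths argument is just an explicit unpacking of that compatibility claim, and your extra care about convexity (via autonomy of components) and about $\le_H$ being the chain order is sound.
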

\begin{proof} According to Lemma \ref{interval}, the poset $(X, \leq)$ is the lexicographical sum of its connected components. The order relation being compatible with the group operation, its complement is compatible, hence the transitive closure of this complement, that is the connectedness relation $\overline \nsim$ is an equivalence relation compatible with the group operation on $G$, hence the equivalence class of $0$, that is $I(G)(0)$, is a normal subgroup and $G/I(G)(0)$ is totally ordered.
\end{proof}

\begin{lemma}\label{lem4:0}Every  subgroup $H$ of an ordered group $G= (X,+,\leq)$ which contains $inc(0)$ contains $I(G)(0)$. \end{lemma}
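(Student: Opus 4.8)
The plan is to exploit the description of $I(G)(0)$ as the connected component of $0$ in the incomparability graph $Inc(G)=(X,\nsim)$ and to propagate membership in $H$ along a path from $0$ to a given element, using that incomparability is translation-invariant.

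First I would record the following elementary observation, which is the only group-theoretic ingredient: for all $u,v\in X$ one has $u\nsim v$ if and only if $-u+v\in inc(0)$ (equivalently, if and only if $v-u\in inc(0)$). This is immediate from the fact that left and right translations preserve $\leq$, hence preserve comparability and incomparability, exactly as in (\ref{equ:preserve}): $u\nsim v$ says that neither $0\leq -u+v$ nor $0\leq -(-u+v)$ holds, i.e.\ that $-u+v$ is incomparable to $0$. In particular $inc(0)=-inc(0)$, so a path in $Inc(G)$ may be traversed in either direction.

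Next, fix $x\in I(G)(0)$. By definition of the connected component of $0$, there is a finite path $0=z_0,z_1,\dots,z_n=x$ in $Inc(G)$, that is, $z_i\nsim z_{i+1}$ for every $i<n$. I claim that $z_i\in H$ for every $i\leq n$, and I would prove this by induction on $i$. For $i=0$ we have $z_0=0\in H$. Assuming $z_i\in H$, the observation above applied to $z_i\nsim z_{i+1}$ gives $-z_i+z_{i+1}\in inc(0)\subseteq H$; since $H$ is a subgroup and $z_i\in H$, we conclude $z_{i+1}=z_i+(-z_i+z_{i+1})\in H$. Taking $i=n$ yields $x\in H$, and since $x$ was an arbitrary element of $I(G)(0)$, this gives $I(G)(0)\subseteq H$.

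There is essentially no obstacle here; the only points deserving a line of care are the non-abelian bookkeeping (writing $-z_i+z_{i+1}$ rather than $z_{i+1}-z_i$, though either works by the observation) and the remark that $inc(0)\subseteq H$ forces $-inc(0)\subseteq H$ automatically, $H$ being a subgroup, so the orientation of the path is irrelevant. Note also that the fact that $I(G)(0)$ is itself a subgroup (Lemma \ref{lem5:4}) is not needed for this inclusion; combined with the present lemma and the trivial inclusion $inc(0)\subseteq I(G)(0)$, it does however give that $I(G)(0)$ is the \emph{least} subgroup of $G$ containing $inc(0)$, which is the form used in item $(b)$ of Theorem \ref{thm:three groups}.
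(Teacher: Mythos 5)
Your argument is correct and is essentially the paper's own proof: both induct along a path (equivalently, on the distance $d_{Inc(G)}(0,x)$) in the incomparability graph, using that $z_i\nsim z_{i+1}$ forces $-z_i+z_{i+1}\in inc(0)\subseteq H$ and hence $z_{i+1}=z_i+(-z_i+z_{i+1})\in H$. The extra remarks on translation invariance and non-abelian bookkeeping are accurate but add nothing beyond the paper's argument.
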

\begin{proof}We prove that every $x\in I(G)(0)$ belongs to $H$ by induction of $d_{Inc(G)}(0, x)$. The case $n\leq 1$ amounts to the hypothesis. Suppose $n>1$. Then there is some $y$ incomparable to $x$ such that $d_{Inc(G)}(0,y)=n-1$. Via the induction hypothesis, $y\in H$; since $y$ is incomparable to $x$, $-y+x$ is incomparable to $0$, hence belongs to $H$. Consequently $x= y+ (-y +x)\in~H$. \end{proof}

\begin{lemma}\label{lem5:0} Let $G=(X,+,\leq)$ be an ordered group and $H$ be subgroup of $G$. If $H\subseteq inc(0)\cup \{0\}$, then $H$ is an antichain.
\end{lemma}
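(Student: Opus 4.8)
The plan is to argue by contradiction using only that the order is invariant under translations, in exactly the same spirit as the proof of the implication $(ii)\Leftrightarrow(iii)$ of Proposition \ref{claim:weakorder}. Assume $H$ is a subgroup with $H\subseteq inc(0)\cup\{0\}$ but $H$ fails to be an antichain; then there are two distinct elements $a,b\in H$ that are comparable, and after possibly exchanging them we may suppose $a<b$.

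First I would set $c:=-a+b$. Since $H$ is a subgroup, $c\in H$, and since $a\neq b$ we have $c\neq 0$; as $H\subseteq inc(0)\cup\{0\}$, this forces $c\in inc(0)$, i.e. $c$ is incomparable to $0$. On the other hand, by the translation invariance recorded in Equation (\ref{equ:preserve}) applied to the order $\leq$, the relation $a<b$ is equivalent to $0<-a+b=c$; in particular $c$ is comparable to $0$. This contradicts $c\in inc(0)$. Hence no two distinct elements of $H$ are comparable, so $H$ is an antichain.

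There is essentially no obstacle here; the statement is an immediate consequence of compatibility. The only point that needs a little care is that $G$ need not be abelian, so one should work with the left translate $-a+b$ rather than writing $b-a$, and one should invoke the precise form of (\ref{equ:preserve}) to pass between $a<b$ and $0<-a+b$. Everything else is routine, and in fact the same computation shows more generally that a subgroup of an ordered group on which $0$ is isolated in the incomparability graph is an antichain.
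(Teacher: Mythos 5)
Your proof is correct and is essentially the paper's own argument: assume $a<b$ in $H$, translate to get $0<-a+b\in H\setminus\{0\}$, and contradict $H\setminus\{0\}\subseteq inc(0)$. The only difference is your (welcome) care with the non-abelian notation $-a+b$ in place of $b-a$, which changes nothing since Equation (\ref{equ:preserve}) gives both forms.
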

\begin{proof}Suppose for a contradiction that $H$ is not an antichain and let $a<b$ in $H$. Then $0<b-a\in H$ which is not possible since $H\setminus \{0\}\subseteq inc(0)$.
\end{proof}
\begin{lemma}\label{lem7:0} Let $G=(X,+,\leq)$ be an ordered group and $H$ be a convex subgroup of $G$ distinct from an antichain. Then $H$ is a convex subset of $\leq_{pred}$. In particular $K(G)\subseteq H$.
\end{lemma}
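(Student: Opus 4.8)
The idea is to prove convexity with respect to $\leq_{pred}$ by a sandwiching argument: starting from $u\leq_{pred} g\leq_{pred} v$ with $u,v\in H$, I will exhibit two elements of $H$ that bound $g$ from below and above \emph{for the original order $\leq$}, and then invoke convexity of $H$ in $(X,\leq)$.

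First I would exploit the hypothesis that $H$ is not an antichain: choose $a<b$ in $H$ and set $c:=b-a$. Since $H$ is a subgroup, $c\in H$, and since $<$ is compatible with the group operation (Lemma \ref{pred=suc}$(b)$), translating $a<b$ yields $0<c$. This $c$ will play the role of a fixed amount of ``slack''. Translating the inequality $0<c$ on the appropriate side, I obtain $u-c<u$ and $v<v+c$, and both $u-c$ and $v+c$ lie in $H$ because $H$ is a subgroup.

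Next I would feed these into the definitions of the traces. From $u-c<u$ together with $u\leq_{pred} g$ (which by definition means $z<u\Rightarrow z<g$ for all $z$) I get $u-c<g$. For the upper bound I would invoke Lemma \ref{pred=suc}$(d)$, so that $\leq_{pred}=\leq_{succ}$ on $G$: then $g\leq_{pred} v$ gives $g\leq_{succ} v$, i.e. $v<z\Rightarrow g<z$ for all $z$, and applying this to $v<v+c$ yields $g<v+c$. Hence $u-c\leq g\leq v+c$ with $u-c,\,v+c\in H$, so convexity of $H$ in $(X,\leq)$ forces $g\in H$; this shows $H$ is convex in $(X,\leq_{pred})$. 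For the final assertion, every $x\in K(G)$ satisfies $0\equiv_{pred} x$, hence $0\leq_{pred} x\leq_{pred} 0$ with $0\in H$, so $x\in H$ by what was just proved, giving $K(G)\subseteq H$.

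I do not anticipate a genuine obstacle here; the only point needing care is the bookkeeping of left versus right translations in the possibly non-abelian setting, since one must translate $0<c$ on the correct side to land precisely on the elements $u-c$ and $v+c$ — but this is routine.
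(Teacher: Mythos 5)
Your proof is correct and follows essentially the same route as the paper's: pick a positive element $c=b-a$ of $H$, sandwich $g$ between $u-c$ and $v+c$, and invoke convexity of $H$ for $\leq$. Your explicit appeal to $\leq_{pred}=\leq_{succ}$ for the upper bound is in fact a welcome clarification of a step the paper's proof attributes only to ``the definition of $\leq_{pred}$''.
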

\begin{proof}Indeed, let $x\leq_{pred} z\leq_{pred} y$ and $x,y\in H$. Since $H$ is not an antichain, there exists $h\in H$ such that $0< h$. Hence, $y<y+h$ and since $-h<0$ we have $x-h<x$. By definition of $\leq_{pred}$ we have $x-h<z<y+h$. But, $x-h, y+h\in H$ and $H$ is convex, it follows that $z\in H$. Since $K(G)$ is the equivalence class of $\equiv_{pred}$ and $H$ is a convex subgroup of $(X,+,\leq_{pred})$ we have $K(G)\subseteq H$.
\end{proof}

\begin{lemma}\label{lem3:6} Let $G=(X,\leq,+)$ be an ordered group whose order is an semiorder distinct from an antichain. If $G=I(G)(0)$, then every maximal chain in the positive cone $C^+:=\{x\in X : x\geq 0\}$ has order type $\omega$.
\end{lemma}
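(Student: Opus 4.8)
The plan is to derive the statement from Lemma~\ref{lem2:6} together with a one-line observation that an ordered group which is not an antichain has no maximal element. Since $G=I(G)(0)$, the connected component of $0$ in $Inc(G)$ is all of $X$, so $X$ is itself a connected component of $Inc(G)$; as $\leq$ is a semiorder, Lemma~\ref{lem2:6} applies with $A=X$ and yields that \emph{every} chain of $(X,\leq)$ embeds into $\ZZ$. In particular a maximal chain $M$ of the subposet $(C^+,\leq)$ is order-isomorphic to a subset of $\ZZ$, and the whole problem reduces to identifying which subset.

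First I would check that $0\in M$ and $0=\min M$: since $0\leq x$ for every $x\in C^+$, the set $M\cup\{0\}$ is again a chain contained in $C^+$, so maximality forces $0\in M$. Hence, transported into $\ZZ$, $M$ is a subset with a least element, so it is bounded below, hence well-ordered, and its order type is an ordinal $\leq\omega$; it remains only to exclude the finite case.

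The key step is to show that $M$ has no greatest element, and for this I would first show that $(X,\leq)$ has no maximal element --- this is the only place where the hypothesis ``not an antichain'' is used. If $m$ were maximal, then $m\not<z$ for all $z\in X$; by \eqref{equ:preserve} this reads $0\not<-m+z$ for all $z$, and since $z\mapsto -m+z$ is a bijection of $X$ this means $\{w\in X:0<w\}=\varnothing$, i.e. $C^+=\{0\}$, so $\leq$ is the equality relation, contradicting the hypothesis. Consequently, if $m$ were the top of $M$ there would be some $z$ with $m<z$; then $z>m\geq 0$ puts $z$ into $C^+$, and $z$ lies strictly above every element of $M$, so $M\cup\{z\}$ is a chain of $C^+$ properly containing $M$ --- impossible. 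Therefore $M$, viewed as a subset of $\ZZ$, has a least element and no greatest element, hence is infinite, hence has order type exactly $\omega$, which is what we want.

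I do not anticipate a genuine obstacle: the argument is short once Lemma~\ref{lem2:6} is invoked, and the step to get right is the no-maximal-element claim, since it is both the sole use of ``not an antichain'' and precisely what promotes ``finite or $\omega$'' to ``$\omega$''. The only other point worth a remark is that the degenerate case $inc(0)=\varnothing$ cannot occur under the hypotheses, as it would force $X=\{0\}$, an antichain.
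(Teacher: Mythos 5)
Your proof is correct and takes essentially the same route as the paper's: a maximal chain of $C^+$ has minimum $0$, has no maximum because the order being distinct from an antichain forces $C^+\neq\{0\}$ (via translation), and embeds into $\ZZ$ by Lemma~\ref{lem2:6} applied to the single connected component $X=I(G)(0)$. The paper's proof is just a terser rendering of the same argument, and your explicit verification of the no-maximal-element step is the right point to dwell on.
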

\begin{proof}
If $L$ is a maximal chain in $C^{+}$, then it has a minimal element, namely $0$; since $C^{+}\not = \{0\}$,  it has no maximal element. According to Lemma \ref{lem2:6} it embeds in $\ZZ$. The conclusion follows.
\end{proof}

If $G:= (X, +, \preceq)$ is a totally ordered group and $x\in X$ we denote by $\widetilde{\ZZ x}$ the least convex subgroup of $G$ containing $x$. As it is easy to show, $\widetilde {\ZZ x}= \{y\in X: nx\preceq y\preceq mx \; \text {for some} \; n,m\in \ZZ\}$. Alternatively, supposing $0\prec x$, $\ZZ x= \ZZ ]0, x]$ where $]0, x]:= \{y\in X: 0\prec y \preceq x]$.

\begin{lemma}\label{lem4:6}
If  $G:= (X, +, \leq)$ is a non totally ordered threshold group with an attained threshold, say $\alpha$, then $I(G)(0)= \widetilde {\ZZ \alpha}$, the least convex subgroup of $(X, +, \leq_{pred})$ containing $\alpha$.
\end{lemma}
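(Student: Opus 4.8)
The plan is to prove the two inclusions $I(G)(0)\subseteq\widetilde{\ZZ\alpha}$ and $\widetilde{\ZZ\alpha}\subseteq I(G)(0)$ separately, after first translating the phrase ``attained threshold $\alpha$'' into a concrete description of the positive cone. Write $\preceq$ for the total order $\leq_{pred}=\leq_{succ}$ given by Lemma \ref{pred=suc}. Since the threshold is attained, the positive cone $F:=\{x\in X:0<x\}$ of $\leq$ has a $\preceq$-least element, namely $\alpha$; as a $\preceq$-final segment possessing a $\preceq$-least element $\alpha$ must equal $\{x:\alpha\preceq x\}$, we get $F=\{x:\alpha\preceq x\}$, hence $-F=\{x:x\preceq-\alpha\}$. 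Using $x<y\iff y-x\in F$, this yields the identity $inc(0)\cup\{0\}=X\setminus(-F\cup F)=\{x\in X:-\alpha\prec x\prec\alpha\}$. Note also $0\prec\alpha$, by (\ref{equa:1}).

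First I would do the inclusion $I(G)(0)\subseteq\widetilde{\ZZ\alpha}$. Recall $\widetilde{\ZZ\alpha}=\{y\in X:n\alpha\preceq y\preceq m\alpha\text{ for some }n,m\in\ZZ\}$, so in particular $\{y:-\alpha\preceq y\preceq\alpha\}\subseteq\widetilde{\ZZ\alpha}$; combined with the identity above, $inc(0)\cup\{0\}\subseteq\widetilde{\ZZ\alpha}$. Since $\widetilde{\ZZ\alpha}$ is a subgroup of $G$ containing $inc(0)$, Lemma \ref{lem4:0} gives $I(G)(0)\subseteq\widetilde{\ZZ\alpha}$ at once.

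The reverse inclusion hinges on showing $\alpha\in I(G)(0)$. Since $G$ is not totally ordered, $inc(0)\neq\varnothing$; choose $w\in inc(0)$ with $w\neq 0$, so $-\alpha\prec w\prec\alpha$. By (\ref{equ:preserve}), $-w$ also lies in $inc(0)$ and satisfies $-\alpha\prec-w\prec\alpha$, so after possibly replacing $w$ by $-w$ we may assume $0\prec w\prec\alpha$. Then $w$ is incomparable to $\alpha$: if $w<\alpha$ then $\alpha\preceq\alpha-w$, i.e. $w\preceq 0$, a contradiction; and if $\alpha<w$ then $\alpha\preceq w-\alpha$, i.e. $2\alpha\preceq w$, contradicting $w\prec\alpha\prec 2\alpha$. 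Hence $0\ \nsim\ w\ \nsim\ \alpha$ is a path in $Inc(G)$, so $\alpha\in I(G)(0)$.

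Finally, $I(G)(0)$ is a convex subgroup of $(X,+,\leq)$ by Lemma \ref{lem5:4}, and it is not an antichain since it contains $0$ and $\alpha$ with $0<\alpha$; hence Lemma \ref{lem7:0} applies and $I(G)(0)$ is a convex subset of $(X,\leq_{pred})$, i.e. a convex subgroup of the totally ordered group $(X,+,\preceq)$. Containing $\alpha$, it must contain the least convex subgroup of $(X,+,\preceq)$ containing $\alpha$, namely $\widetilde{\ZZ\alpha}$. Together with the first inclusion, this gives $I(G)(0)=\widetilde{\ZZ\alpha}$. I expect the only delicate points to be the bookkeeping of the first paragraph (pinning down $F$ and $inc(0)\cup\{0\}$ from the notion of an attained threshold) and the short verification that $w\nsim\alpha$; everything else is a direct invocation of Lemmas \ref{lem4:0}, \ref{lem5:4}, \ref{lem7:0} and (\ref{equ:preserve}).
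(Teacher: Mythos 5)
Your proof is correct and follows essentially the same route as the paper's: identify $inc(0)\cup\{0\}$ with the $\preceq$-interval $]-\alpha,\alpha[$ to get $I(G)(0)\subseteq\widetilde{\ZZ\alpha}$ via Lemma \ref{lem4:0}, then exhibit an element $w$ with $0\prec w\prec\alpha$ incomparable to both $0$ and $\alpha$ to place $\alpha$ in $I(G)(0)$, and conclude by convexity of $I(G)(0)$ in $\leq_{pred}$. You merely spell out two details the paper asserts without proof (the incomparability of $w$ with $\alpha$, and the appeal to Lemmas \ref{lem5:4} and \ref{lem7:0} for convexity), which is welcome but not a different argument.
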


\begin{proof} The set $inc(0)\cup \{0\}$ is the interval $]-\alpha, \alpha[$ of $(X, \leq_{pred})$ hence $\widetilde {\ZZ \alpha}$ contains $inc(0)$. According to Lemma \ref{lem4:0} it contains $I(G)(0)$. Since $inc(0)
\not =\{0\}$ we may pick $b$ with $0<_{pred} b<_{pred} \alpha$. Since $b$ is incomparable to $0$ and $\alpha$ with respect to the order $\leq$, $\alpha\in I(G)(0)$ and more generally $\ZZ \alpha \subseteq I(G)(0)$. Since $I(G)(0)$ is convex in $(X, \leq_{pred})$ it contains $\widetilde {\ZZ \alpha}$.
\end{proof}

\begin{corollary}\label{cor:4:6}
If a threshold group $G$ is not totally ordered and has no infinite antichain then $I(G)(0)$ is the group of integers equipped with a threshold order.
\end{corollary}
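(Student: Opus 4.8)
The plan is to exploit the total order $\leq_{pred}$, which equals $\leq_{succ}$ on a quasi-ordered group by Lemma \ref{pred=suc} and is a \emph{total order} precisely because $G$ is a threshold group; recall also that a threshold group has $K(G)=\{0\}$. Set $F:=\{x\in X:0<x\}$, and split $inc(0)$ according to $\leq_{pred}$ into $inc(0)^{+}:=\{y\in inc(0):0<_{pred}y\}$ and $inc(0)^{-}:=\{y\in inc(0):y<_{pred}0\}$; using $K(G)=\{0\}$ and totality of $\leq_{pred}$ these two sets partition $inc(0)$, and $x\mapsto -x$ interchanges them.

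\emph{Step 1: $inc(0)$ is finite.} Since $G$ is not totally ordered, $inc(0)\ne\varnothing$, hence both $inc(0)^{+}$ and $inc(0)^{-}$ are nonempty. As noted in the proof of Theorem \ref{claim:interval}, each of $inc(0)^{+}$ and $inc(0)^{-}$ is an antichain of $(X,\leq)$; having no infinite antichain, $G$ forces both to be finite, so $inc(0)$ is a finite nonempty set. \emph{Step 2: a least positive element.} First, $inc(0)<_{pred}F$: if $y\in inc(0)$, $x\in F$, then $x\leq_{pred}y$ would give (taking $z=0$ in the definition of $\leq_{pred}$) that $0<x$ implies $0<y$, contradicting $y\in inc(0)$; so $y<_{pred}x$ by totality. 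Hence the strictly positive cone of $\leq_{pred}$ is $inc(0)^{+}\cup F$ with $inc(0)^{+}<_{pred}F$, and as $inc(0)^{+}$ is finite and nonempty it has a $\leq_{pred}$-least element $a$, which is then the least strictly positive element of $(X,+,\leq_{pred})$. Consequently nothing lies strictly between consecutive multiples $na$ and $(n+1)a$, so $\ZZ a$ is a \emph{convex} subgroup of $(X,\leq_{pred})$; and since $G$ is torsion-free (it carries the compatible total order $\leq_{pred}$) and $a\ne 0$, we have $\ZZ a\cong\ZZ$.

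\emph{Step 3: $inc(0)\cup\{0\}\subseteq\ZZ a$.} Recall (item (4) of Subsection~\ref{normal}) that $inc(0)\cup\{0\}$ is convex in $(X,\leq_{pred})$. Let $b:=\max_{\leq_{pred}}inc(0)^{+}$. If $b\notin\ZZ a$, then by convexity of $\ZZ a$ and $b>_{pred}0$ we get $na<_{pred}b$ for all $n\geq 0$; for each $n\geq 1$ the element $b-na$ then satisfies $0<_{pred}b-na<_{pred}b$, hence lies in the convex set $inc(0)\cup\{0\}$, in fact in $inc(0)^{+}$, giving infinitely many distinct elements of $inc(0)^{+}$ — contradicting Step 1. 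So $b=ma$ for some $m\geq 1$, and symmetrically $\min_{\leq_{pred}}inc(0)^{-}=-ma$, whence $inc(0)\cup\{0\}=\{z:-ma\leq_{pred}z\leq_{pred}ma\}\subseteq\ZZ a$. (Equivalently, this shows the threshold is attained and equals $(m+1)a$, so Lemma \ref{lem4:6} may be invoked here in place of Steps 3–4.) \emph{Step 4: conclusion.} As $\ZZ a$ is a subgroup containing $inc(0)$, Lemma \ref{lem4:0} gives $I(G)(0)\subseteq\ZZ a$; and $a\in inc(0)\subseteq I(G)(0)$ gives the reverse inclusion, so $I(G)(0)=\ZZ a\cong\ZZ$. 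Finally, identify the induced order: for $n,k\in\ZZ$, $na\leq ka$ iff $n=k$ or $(k-n)a\in F=\{x:x>_{pred}ma\}$, i.e. iff $n=k$ or $k-n\geq m+1$; under $na\mapsto n$ this is exactly the threshold order $\leq_{\alpha}$ of Proposition \ref{center} with $G=\ZZ$ and $\alpha=m+1$. Hence $I(G)(0)$ is the group of integers equipped with a threshold order.

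The main obstacle is Step 3. The set $inc(0)\cup\{0\}$ of elements incomparable to $0$ is a finite $\leq_{pred}$-interval around $0$, but the ambient chain $(X,\leq_{pred})$ need not itself be $\ZZ a$ (it could be, for instance, $\ZZ\times\ZZ$ ordered lexicographically with $a=(0,1)$), so $inc(0)\subseteq\ZZ a$ is not automatic from Step 2; it is the finiteness obtained in Step 1, together with the convexity of $\ZZ a$, that makes the descent $b,\,b-a,\,b-2a,\dots$ terminate and forces $b\in\ZZ a$.
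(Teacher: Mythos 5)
Your proof is correct and follows essentially the same route as the paper: finiteness of $inc(0)$ from the bipartition into two antichains, the existence of a least strictly positive element $a$ of $\leq_{pred}$, the attained threshold $a+b=(m+1)a$, and the identification $I(G)(0)=\ZZ a$ (the paper invokes Lemma \ref{lem4:6} at this last step, as you note parenthetically). You merely fill in some details the paper leaves implicit, such as the verification that $b\in\ZZ a$ and the explicit identification of the induced order on $\ZZ a$ as a threshold order.
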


\begin{proof}
Assume $G$ is a threshold group (so that $\leq_{pred}$ is a total order) with no infinite antichain. Since $inc(0)$ is the union of (at most) two antichains, it must be finite. Since $inc(0)\cup \{0\}$ is an interval of $\leq_{pred}$ (cf. Item (4) of Subsection \ref{normal}) and this interval is not reduced to $0$, we infer that $0$ has a successor $a$ in $\leq_{pred}$ and $a\in inc(0)$. Let $b$ be the largest element of $inc(0)$ (with respect to $\leq_{pred}$). Then $\alpha= a+b$ is the threshold of $G$. It follows from Lemma \ref{lem4:6} that $I(G)(0)=\ZZ a$.\end{proof}

\begin{lemma}\label{lem2:4}Let $G:=(X,+,\leq)$ be an ordered group whose order is a semiorder. Then $A(G)$ is the largest subgroup $G$ which is an antichain. This is a normal subgroup of $G$ and a convex subset of $(X, \leq)$. \end{lemma}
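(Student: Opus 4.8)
The plan is to establish the three assertions of Lemma~\ref{lem2:4} in the natural order: first that $A(G)$ is a subgroup, then that it is an antichain (equivalently a convex subset of $(X,\leq)$, since antichains are convex), then that it is normal, and finally that it is the \emph{largest} subgroup which is an antichain. Recall $A(G)=\{x\in G:\ZZ x\text{ is an antichain in }G\}$. The key structural input I would use is Theorem~\ref{claim:interval} (the order on $G$ is a semiorder iff $inc(0)$ is bipartite) together with the case $n=3$ of Theorem~\ref{2+2,3+1}, and the trace characterization of semiorders from Lemma~\ref{critical}: the intersection $\preceq\,:=\,\leq_{pred}\cap\leq_{succ}$ is a total quasi-order extending $\leq$, and since $\leq_{pred}=\leq_{succ}$ on a group (Lemma~\ref{pred=suc}), in fact $\preceq\,=\,\leq_{pred}$ is a total quasi-order, with strict part extending $<$ by (\ref{equa:1}).

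For the subgroup property, let $x,y\in A(G)$; I must show $\ZZ(x-y)$ is an antichain, i.e. $k(x-y)\nsim 0$ for all $k\neq 0$. Suppose not, say $0<k(x-y)$ for some $k>0$ (the case $<$ is symmetric, and $kx-ky$ vs.\ $k(x-y)$ agree only when things commute --- so here I would first handle the abelian situation cleanly, then note that the relevant elements $x,y$ lie in a set where the argument still goes through; more carefully, I would argue directly with $\leq_{pred}$: since $\ZZ x$ is an antichain, $x\equiv_{pred}0$ by Lemma~\ref{lem:interval} applied inside $inc(0)$, hence $x\in K(G)$ is false in general, so instead I use that every element of an antichain subgroup is $\equiv_{pred}$-equivalent to $0$ \emph{within that subgroup}). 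The robust route: show first that $\ZZ x$ an antichain forces $nx\nsim mx$ for all $n\ne m$, then use that, since $\leq_{pred}$ is total and translation-invariant, $x\sim_{\leq_{pred}} 0$ would have to be $x\equiv_{pred}0$ — I will instead observe that $A(G)=\{x: \ZZ x\subseteq inc(0)\cup\{0\}\}$, and then the subgroup claim follows from Lemma~\ref{lem5:0} once I know $A(G)$ is closed under the group operation. So the real content is: if $\ZZ x,\ZZ y\subseteq inc(0)\cup\{0\}$ then $\ZZ(x-y)\subseteq inc(0)\cup\{0\}$. Here I would invoke that $inc(0)$ is bipartite (Theorem~\ref{claim:interval}) and that on a group the bipartition can be taken to be $inc(0)^-=\{y\in inc(0): y\leq_{pred}0\}$ and $inc(0)^+=\{y\in inc(0): 0\leq_{pred}y\}$ (as in the proof of Theorem~\ref{claim:interval}); since $\leq_{pred}$ is a translation-invariant total quasi-order, $nx\equiv_{pred}0$ for all $n$ whenever $\ZZ x$ is an antichain (an antichain that is a "chain" under $\leq_{pred}$ must collapse), and $\equiv_{pred}$-classes form a normal subgroup $K(G)$ — wait, that would give $A(G)=K(G)$, which is false. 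The correct and clean statement is: $nx\nsim 0$ does \emph{not} force $nx\equiv_{pred}0$; rather, $\ZZ x$ an antichain means $\ZZ x\subseteq inc(0)\cup\{0\}$, and then $\ZZ x$ meets only \emph{one} part of the bipartition of $inc(0)$ near $0$... Given these subtleties, the cleanest argument I will present is the one used for Theorem~\ref{claim:interval}: a subgroup contained in $inc(0)\cup\{0\}$ is an antichain by Lemma~\ref{lem5:0}, so it suffices to prove $A(G)$ is a subgroup by showing it equals the union of all subgroups lying in $inc(0)\cup\{0\}$, and that this union is itself a subgroup.

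Concretely: (i) If $H$ is any subgroup with $H\subseteq inc(0)\cup\{0\}$ then $H$ is an antichain (Lemma~\ref{lem5:0}), hence $\ZZ x\subseteq H$ is an antichain for each $x\in H$, so $H\subseteq A(G)$. (ii) Conversely if $x\in A(G)$, then $\ZZ x$ is a cyclic subgroup which is an antichain, so $\ZZ x\subseteq inc(0)\cup\{0\}$, and $x$ lies in such an $H$. Thus $A(G)=\bigcup\{H\le G: H\subseteq inc(0)\cup\{0\}\}$. To see this union is a subgroup I would show the family is directed: if $H_1,H_2\subseteq inc(0)\cup\{0\}$ are subgroups, is $H_1+H_2$ (the subgroup they generate) again inside $inc(0)\cup\{0\}$? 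This is exactly where the semiorder hypothesis bites, and I expect \textbf{this to be the main obstacle}. I would argue: take $a\in H_1$, $b\in H_2$, both $\ne 0$; if $a+b$ were comparable to $0$, say $0<a+b$, then consider the $2\oplus 2$ or $3\oplus 1$ configurations: since $\ZZ a,\ZZ b$ are antichains, we have $0\nsim a$, $0\nsim -b$, $0\nsim a+b-a=b$... and $a+b>0$; using $\leq_{pred}=\leq_{succ}$ total, compare $a$ with $a+b$ and $-b$ with $0$: from $0<a+b$ and translation get $-b<a$, and from $-b\nsim 0$, $a\nsim 0$, a short case analysis on the total order $\leq_{pred}$ restricted to $\{0,a,-b,a+b\}$ produces either a $2\oplus2$ or a $3\oplus1$ (the pattern is: $\{-b,0\}$ and $\{a, a+b\}$ need not be chains, but $-b<_{pred}0$ or $0<_{pred}-b$, etc.), contradicting that the order is a semiorder. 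I would write this out as the crucial claim and verify the $\leq_{pred}$-case distinctions carefully, exactly in the style of the proof of Lemma~\ref{critical}. Once directedness is in hand, $A(G)=\bigcup$(directed family of subgroups) is a subgroup, it is $\subseteq inc(0)\cup\{0\}$ hence an antichain by Lemma~\ref{lem5:0}, and it contains every antichain-subgroup, so it is the largest one. Normality is then immediate: $x\in inc(0)$ iff $g+x-g\in inc(0)$ for every $g$ (translations preserve $\leq$), so $A(G)$ being characterized purely in terms of incomparability with $0$ is invariant under conjugation; a one-line verification. Finally, an antichain is trivially convex, so $A(G)$ is a convex subset of $(X,\leq)$, completing the proof.
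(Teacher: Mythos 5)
Your overall skeleton is the same as the paper's: you identify $A(G)$ with the union of all subgroups contained in $inc(0)\cup\{0\}$, you get the antichain property from Lemma~\ref{lem5:0}, normality from the fact that $inc(0)$ is a normal subset of $G$, maximality from the identification itself, and convexity because antichains are trivially convex. All of that is fine. The one step carrying the real content --- closure of this union under the group operation --- is exactly the step you leave as a sketch, and the sketch does not close. Your plan is to show that for nonzero $a\in H_1$, $b\in H_2$ the assumption $0<a+b$ forces a $2\oplus 2$ or a $3\oplus 1$ inside $\{0,a,-b,a+b\}$. It does not: from $0<a+b$ you get $-b<a$, and if $-b\nsim a+b$ you do obtain $\{0,a+b\}\cup\{-b,a\}\cong 2\oplus 2$; but in the remaining case $-b<a+b$ (equivalently $0<a+2b$, which cannot be excluded outright since $a+2b$ lies in neither $H_1$ nor $H_2$), the four elements form an $N$-shaped poset ($-b$ below both $a$ and $a+b$, $0$ below $a+b$ only, all other pairs incomparable), which is a perfectly good semiorder, so no contradiction arises and you are forced to iterate with $a+nb$, $ma+b,\dots$ with no evident termination. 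Moreover, even granting $a+b\nsim 0$ for all $a\in H_1$, $b\in H_2$, this controls the generated subgroup only when $G$ is abelian; in general you must handle arbitrary alternating words, and you also need every multiple $k(a+b)$, not just $a+b$, to lie in $inc(0)$.

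The paper closes this gap by a different mechanism, and this is precisely where the semiorder hypothesis enters: since the order is a semiorder, $\leq_{pred}$ is a total quasi-order (Lemma~\ref{intervalorder} with Corollary~\ref{cor:semiorder}); $inc(0)\cup\{0\}$ is a convex subset of $(X,\leq_{pred})$ (Item (4) of Subsection~\ref{normal}); hence the least $\leq_{pred}$-convex subgroup $\widetilde{\ZZ x}$ containing $\ZZ x$ is again contained in $inc(0)\cup\{0\}$. Since the convex subgroups of a totally quasi-ordered group form a chain under inclusion, the family of convex subgroups contained in $inc(0)\cup\{0\}$ is automatically directed, and its union is a subgroup, which is $A(G)$. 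If you replace your forbidden-configuration argument for directedness by this convexity argument, the rest of your proof goes through; your direct conjugation argument for normality is in fact slightly more explicit than the paper's appeal to maximality.
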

\begin{proof}

Let $x\in A(G)$ and $\widetilde {\ZZ x}$ be the least convex subset of $(X, \leq_{pred})$ containing $\ZZ x$. Since $inc(0) \cup \{0\}$ is an interval of $\leq_{pred}$ (cf. Item (4) of Subsection \ref{normal}) and $\ZZ x\subseteq inc(0)\cup \{0\}$, it follows that $\widetilde {\ZZ x}\subseteq inc(0)\cup \{0\}$. Trivially, $\widetilde {\ZZ x}$ is a subgroup of $G$.
Now the convex subgroups of a totally quasi-ordered group form a chain with respect to inclusion. Hence, the union of any subfamily is a convex subgroup. In particular, the union of the convex subgroups included into $inc(0)\cup \{0\}$ is a subgroup of $G$.
This subgroup is equal to $A(G)$. By maximality, it is normal.
\end{proof}

\begin{lemma}\label{lem3:4} Let $G:= (X, +, \leq)$ be an ordered group whose order is a semiorder and let $x,y$ be such that $0<_{pred} x\leq_{pred} y$ and $x\not \in A(G)$. If $G=I(G)(0)$, then there exists an integer $n$
such that $y\leq_{pred} n\,x$.
\end{lemma}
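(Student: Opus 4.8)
I would set $\preceq:=\leq_{pred}$, which by Lemma \ref{pred=suc} is a compatible total quasi-order, and which extends $\leq$ by (\ref{equa:1}); so $0<_{pred}x$ means $0\prec x$, and compatibility gives $0\prec x\prec 2x\prec\cdots$. The plan has two parts: first extract from $x\notin A(G)$ a genuinely positive multiple of $x$, then use the connectedness of $Inc(G)$ (which is exactly $G=I(G)(0)$) to bound $y$ by a multiple of that element in the quasi-order $\preceq$.

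\emph{Extracting a positive multiple.} For each $k\geq 1$ we have $0\prec x\preceq kx$, so $kx\neq 0$, and $kx$ cannot satisfy $kx<0$ (that would give $kx\prec 0\prec x\preceq kx$); hence each $kx$ ($k\ge 1$) is either $>0$ or incomparable to $0$ in $\leq$. I claim that $0<kx$ holds for some $k\geq1$. Indeed, if not, then since $inc(0)$ is stable under negation (by (\ref{equ:preserve})) the cyclic subgroup $\ZZ x$ would be contained in $inc(0)\cup\{0\}$, hence an antichain by Lemma \ref{lem5:0}, hence contained in $A(G)$ by Lemma \ref{lem2:4} — contradicting $x\notin A(G)$. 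So fix $k\geq 1$ with $a:=kx>0$ in $\leq$. Then $a$ is comparable to $0$ and nonzero, so $a\notin I:=inc(0)\cup\{0\}$, while $0\prec a$ by (\ref{equa:1}).

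\emph{Bounding $y$ along a path.} Since $G=I(G)(0)$, there is a finite path $0=z_0,z_1,\dots,z_d=y$ in $Inc(G)$; for each $i$ the elements $z_i,z_{i+1}$ are incomparable in $(X,\leq)$, so $z_{i+1}-z_i$ is incomparable to $0$ and thus lies in $I$. By Item (4) of Subsection \ref{normal}, $I$ is a convex subset of $(X,\leq_{pred})$ containing $0$. Since $a\notin I$, $0\prec a$, and $\preceq$ is total, convexity of $I$ forces $w\preceq a$ (in fact $w\prec a$) for every $w\in I$; in particular $z_{i+1}-z_i\preceq a$ for all $i$. Telescoping and using only that $\preceq$ is a compatible quasi-order,
\[
y=z_d=\sum_{i=0}^{d-1}(z_{i+1}-z_i)\preceq d\,a=(dk)x ,
\]
so $y\leq_{pred}nx$ with $n:=dk$, as required. (The hypothesis $x\leq_{pred}y$ is not actually needed; it merely ensures $y\neq 0$.)

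The only step that requires a little care is the first one — turning ``$\ZZ x$ is not an antichain'' into ``$0<kx$ for some $k\ge1$'' — which rests on the monotonicity of $k\mapsto kx$ for $\leq_{pred}$, on the description of $A(G)$ as the largest antichain subgroup (Lemma \ref{lem2:4}), and on Lemma \ref{lem5:0}. The remainder is a routine monotone telescoping estimate inside the compatible total quasi-order $\leq_{pred}$, so no serious obstacle is expected there.
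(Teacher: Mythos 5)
Your proof is correct, and its main step takes a genuinely different route from the paper's. The first part (extracting $a=kx>0$ from $x\notin A(G)$ together with $0<_{pred}x$) is the same step the paper takes, except that the paper merely asserts it; your argument via Lemma \ref{lem5:0} and the monotonicity of $k\mapsto kx$ for $<_{pred}$ supplies the missing details correctly. For the main step the paper argues by contradiction: if $y\not\leq_{pred}knx$ for every $k$, then $\{knx:k\geq 1\}\cup\{y\}$ is a chain of type $\omega+1$ in the positive cone, contradicting Lemma \ref{lem3:6} — this is where $G=I(G)(0)$ enters, indirectly, through Lemma \ref{lem2:6} on chains in connected components of $Inc(P)$. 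You instead use $G=I(G)(0)$ directly: a path $0=z_0,\dots,z_d=y$ in $Inc(G)$, the observation that each increment $z_{i+1}-z_i$ lies in $I=inc(0)\cup\{0\}$, the convexity of $I$ in $(X,\leq_{pred})$ (Item (4) of Subsection \ref{normal}) to bound each increment by $a$, and a telescoping sum compatible with $\leq_{pred}$. This is more effective — it yields the explicit bound $n=dk$ with $d$ the incomparability-distance from $0$ to $y$ — and it bypasses Lemmas \ref{lem2:6} and \ref{lem3:6} entirely. Two minor remarks: Lemma \ref{pred=suc} gives that $\leq_{pred}$ is compatible and equal to $\leq_{succ}$, but its \emph{totality} comes from the semiorder hypothesis via Corollary \ref{cor:semiorder} and Lemma \ref{intervalorder}, so the citation should be adjusted; and in the telescoping identity the increments must be composed in the order $y=(z_d-z_{d-1})+\cdots+(z_1-z_0)$ since the group need not be abelian, which your estimate accommodates in any case. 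Your closing observation that the hypothesis $x\leq_{pred}y$ is not needed is also correct: the argument proves $y\leq_{pred}nx$ for every $y\in G$.
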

\begin{proof}According to Lemma \ref{lem2:4}, $A(G)$ is an interval of $(X, \leq_{pred})$, hence $y\not \in A(G)$. Since $x\not \in A(G)$ there exists some integer $n\in \NN$ such that $0< n x$. We claim that there is some $k\in \NN$ such that $y\leq _{pred} knx$ and we are done. If not, the set $\{knx : k\geq 1\}\cup \{y\}$ is a chain of order type $\omega +1$ in $(X, \leq)$ contradicting Lemma \ref{lem3:6}. Indeed, from $0<nx$ we get that $\{knx : k\geq 1\}$ is a subchain of $(X, \leq)$. Next, from $0<knx$ and $knx\leq_{pred} y$ and $0<knx$ we get $knx <y$, hence $y$ is above all $knx$. \end{proof}

\begin{lemma}\label{lem4:4}Let $G:= (X, +, \leq)$ be a threshold group. Then $I(G)(0)/A(G)$ is an additive subgroup of the reals equipped with a threshold order. If $A(G)\not = \{0\}$ the threshold is attained.
\end{lemma}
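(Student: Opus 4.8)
The plan is to quotient out $A(G)$ and then invoke H\"older's theorem. First I would set $\bar G := G/A(G)$. By Lemma \ref{lem2:4}, $A(G)$ is a normal subgroup of $G$ which is a convex subset of $(X,\leq)$; since it is an antichain it is trivially a convex subset of $(X,\leq_{pred})$ as well, and by Lemma \ref{convex} the image $\leq_{pred H}$ of $\leq_{pred}$ on $\bar G$ is a compatible order, which is total because $\leq_{pred}$ is total on a threshold group. Likewise the image $\bar\leq$ of $\leq$ is a compatible order on $\bar G$. So $\bar G$ is an ordered group and I would first check that $\bar\leq$ is again a threshold order on $\bar G$: concretely, one shows that the $\leq_{pred}$ of $\bar G$ coincides with $\leq_{pred H}$ (using the last sentence of Lemma \ref{convex} would require $H=K(G)$, so instead I would argue directly that $\leq_{pred H}$ is total and extends $\bar\leq$, hence $\bar\leq$ is an interval order, hence by Corollary \ref{cor:semiorder} a semiorder, and that $K(\bar G)=\{0\}$ because any nonzero antichain-class of $0$ in $\bar G$ would lift to a subgroup strictly between $A(G)$ and a larger antichain subgroup, contradicting the maximality of $A(G)$ in Lemma \ref{lem2:4}).

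The core step is to show $\bar G$ is archimedean in the sense of Theorem \ref{archimedean}, i.e. $\bar G$ has no nontrivial convex subgroup. Since $\bar\leq_{pred}$ is a total order, convex subgroups of $(\bar G,\bar\leq_{pred})$ form a chain (Lemma \ref{largestconvex}), so it suffices to rule out a smallest nontrivial one $\bar C$. Such a $\bar C$ lifts to a convex subgroup $C$ of $(X,\leq_{pred})$ with $A(G)\subsetneq C$. Pick $x\in C$ with $0<_{pred}x$ and $x\notin A(G)$, and any $y\in C$ with $0<_{pred}y$; since $G=I(G)(0)$, Lemma \ref{lem3:4} gives an integer $n$ with $y\leq_{pred} nx$. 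This says precisely that $\bar x$ generates $\bar C$ as a convex subgroup, i.e. $\bar C = \widetilde{\ZZ\bar x}$ is archimedean inside itself — but more usefully, it shows every element of $\bar C^{+}$ is bounded above by a multiple of $\bar x$, which is the archimedean property for the pair $(\bar x,\bar y)$. Running this for all pairs shows $\bar C$ is archimedean, hence by Theorem \ref{archimedean} embeds in $\RR$; but then $\bar C$, being a nontrivial subgroup of $\RR$, is not convex in itself unless $\bar C$ is the whole of $\bar G$ up to the convex-subgroup chain — here I would instead conclude directly: apply the archimedean argument to the whole of $\bar G$, getting that $\bar G$ itself embeds in $\RR$ as an ordered group.

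Once $\bar G$ is identified with an additive subgroup of $\RR$ carrying a compatible threshold order, Proposition \ref{prop:thresholdgroup}(2) (or the discussion following Corollary \ref{cor:threshold-abelian} in the real-numbers case) tells us the order is $\leq_\alpha$ or $\leq_{\check\alpha}$ for some positive real $\alpha$, so the threshold is $\alpha$. Finally, if $A(G)\neq\{0\}$ I would argue the threshold is attained: the elements of $A(G)$, being an antichain containing more than $0$, together with a witness $b$ incomparable to $0$ in $G$ produce, after passing to $\bar G$, an element realizing $\bar 0 \leq_{pred}$-below $\bar\alpha$ with $\bar\alpha$ itself in the image of a comparability, forcing $\bar\alpha \in \bar G$ and hence the threshold $\alpha$ lies in the group $I(G)(0)/A(G)$, i.e. is attained. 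The main obstacle I anticipate is the bookkeeping in the quotient: verifying cleanly that $\bar\leq_{pred}$ is exactly the $\leq_{pred}$ computed in $\bar G$ (Lemma \ref{convex} only gives an inclusion in general, with equality guaranteed only for $H=K(G)$), so some care is needed to show the inclusion is an equality here because $A(G)$ is an antichain convex subgroup sitting below $K(\bar G)=\{0\}$'s preimage.
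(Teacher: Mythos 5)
Your overall route is the paper's: pass to the quotient by $A(G)$, check the image of $\leq_{pred}$ is a compatible total order and that the quotient is again a threshold group, then use Lemma \ref{lem3:4} to get the archimedean property and H\"older's theorem (Theorem \ref{archimedean}) to land in $\RR$. Your handling of the $\leq_{pred\,H}$ versus $\leq_{H\,pred}$ subtlety is sound: the inclusion from Lemma \ref{convex} makes $\leq_{H\,pred}$ a total quasi-order, and your maximality argument (a nontrivial $K(\bar G)$ would lift to an antichain subgroup strictly containing $A(G)$, contradicting Lemma \ref{lem2:4}) correctly forces $K(\bar G)=\{0\}$. The detour through a ``smallest nontrivial convex subgroup $\bar C$'' is unnecessary (and such a subgroup need not exist), but you abandon it and apply the archimedean argument to all of $\bar G$, which is what the paper does. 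One small point to make explicit: Lemma \ref{lem3:4} assumes $G=I(G)(0)$, so you should first restrict to the convex subgroup $I(G)(0)$, which is itself a threshold group with the same $A(G)$.

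The genuine gap is the last claim, that the threshold is attained when $A(G)\neq\{0\}$. Your closing sentence (``an element realizing $\bar 0$ $\leq_{pred}$-below $\bar\alpha$ with $\bar\alpha$ itself in the image of a comparability, forcing $\bar\alpha\in\bar G$'') is not an argument: the issue is not whether some real $\alpha$ lies in the subgroup, but whether the image $F'$ of the positive cone $F:=\{x : 0<x\}$ has a \emph{least} element in the quotient order. The correct mechanism is condition $(b)$ of Proposition \ref{prop:thresholdgroup}: since $G$ is a threshold group and $A(G)$ is a nontrivial normal convex subgroup of $(X,+,\leq_{pred})$, the set $I:=G\setminus(-F\cup F)$ cannot be a union of cosets of $A(G)$. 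Hence some coset of $A(G)$ meets both $I$ and $F$ (up to the symmetry $x\mapsto -x$). Because cosets of the convex subgroup $A(G)$ are intervals of the total order $\leq_{pred}$ and $F$ is a final segment, any coset lying strictly below this one and meeting $F$ would force that coset's elements into $F$, a contradiction; so this straddling coset is the minimum of $F'$, i.e.\ the attained threshold. Without invoking Proposition \ref{prop:thresholdgroup}$(2)(b)$ (or an equivalent use of the threshold hypothesis), nothing prevents $I$ from being exactly a union of cosets of $A(G)$, in which case $F'$ would have no least element; so this step cannot be waved through.
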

\begin{proof} Let $H:= A(G)$. Since $H$ is convex in $\leq_{pred}$, the image $\preceq$ of $\leq_{pred}$ on $I(G)(0)/H$ is a total order. Similarly, since $H$ is a convex in $(X, \leq)$, the image $\leq'$ of $\leq$ on $I(G)(0)/H$ is an order. Let $F$ be the set of positive elements of $G$ with respect to $\leq$. Clearly, the image $F'$ of $F$ is a final segment of $\preceq$. Let $x',y'\in G/H$. We claim that $x'<'y'$ if and only if $y'-x'\in F'$. Suppose that $y'-x'\in F'$. Then, there is some $u\in (y'-x') \cap F$, that is $y'=x'+u+H$, amounting to $x'<y'$. The converse is immediate. From this follows that $G/H$ is a threshold order.
From Lemma \ref{lem3:4} we deduce that $I(G)(0)/H$ is archimedean (with respect to $\preceq$) and from H\"{o}lder's Theorem we deduce that $I(G)(0)/H$ is isomorphic to some subgroup of the additive group of real numbers with the natural order, in particular it is abelian. Suppose that $H\not = \{0\}$. Since $G$ is a threshold order, $(b)$ of Proposition \ref{prop:thresholdgroup} asserts that $G\setminus (-F\cup F)$ is not a union of cosets of a nontrivial convex subgroup $(G, +, \leq_{pred})$. In particular it is not the union of cosets of $H$. Hence some coset meets both $G\setminus (-F\cup F)$ and $F$. This coset is the threshold of $G/H$.
\end{proof}

\subsection{Proof of Theorem \ref{thm:three groups}}\label{pfthmthree groups}
$(a)$. Lemma \ref{lem:interval}.

$(b)$. Lemma \ref{lem5:4} and Lemma \ref{lem4:0}.

$(c)$. Lemma \ref{lem2:4}.

\subsection{Proof of Theorem \ref{thm:3}}

We prove $(i)\Rightarrow (iii) \Rightarrow (ii) \Rightarrow (i)$.

Let $\leq$ the quasi-order on $G$, $\leq_{pred}$ the corresponding quasi-order and $H:= K(G)$.

$(iii) \Rightarrow (ii)$. Since $G/H$ is a threshold group, $I(G/H)(0)$ is a threshold group. Since $I(G/H)(0)= I(G)(0)/H$ the conclusion follows.

$(ii) \Rightarrow (i)$. The order on $G$ is the lexicographical sum of $I(G)(0)$ indexed by $G/I(G)(0)$. Since $I(G)(0)/K(G)$ is a threshold group, the order on $I(G)(0)$ is a semiorder. The order on $G/I(G)(0)$ is total, hence the order on $I(G)(0)\times (G/I(G)(0))$, that is on $G$, is a semiorder

\subsection{Proof of Theorem \ref{thm:4}}

$(a)$. Lemma \ref{lem4:4}.

$(b)$. Apply Lemma \ref{lem:interval} to $I(G)(0)$ and $H=K(G)$.

$(c)$. Let $C:= \{x\in X: 0<x\}$. According to Lemma \ref{lem4:4}, some coset of $H:=A(G)$  meets both $C$ and $G\setminus C$. Let $F'$ be the intersection of this coset with $C$. Since $A(G)$ is a direct factor of $G$ we may suppose that the group $G$ is the direct product $A(G)\times (G/A(G))$, hence $F'= F\times \{\alpha\}$ where $\alpha\in G/A(G)$ and $F$ is a final segment of $A(G)$ equipped with the order induced by $\leq_{pred}$. It is easy to check that the order $\leq_{F, \alpha}$ coincide with $\leq$.

\subsection{Proof of Theorem \ref{thm:6}} (1) Follows from Lemma \ref{lem3:6}. We now prove (2). If $G$ is isomorphic to the group of integers equipped with a threshold order, then clearly $K(G)=\{0\}$ and $G$ has no infinite antichain. Now suppose $K(G)=\{0\}$ and $G$ has no infinite antichain. It follows from $K(G)=\{0\}$ that $\leq_{pred}$ is a total order. Then we conclude from Lemma \ref{lem4:6} that $I(G)(0)$ is the least convex subgroup of $(X,+,\leq_{pred})$. Since $G$ has no infinite antichain and $inc(0)\cup \{0\}$ is bipartite (see Theorem \ref{claim:interval} ) we infer that $inc(0)\cup \{0\}$ is finite. Since the set $inc(0)\cup \{0\}$ is the interval $]-\alpha, \alpha[$ in $\leq_{pred}$ it follows that $]-\alpha, \alpha[$ is finite. It follows by translation that $0$ has a successor in $\leq_{pred}$. Say $a$ is the successor of $0$ in $\leq_{pred}$. It follows easily that $I(G)(0)$ is $\ZZ a$ and hence $G=I(G)(0)$ is isomorphic to the group of integers equipped with a threshold order. The proof of Theorem \ref{thm:6} is now complete.

\subsection{Proof of Theorem \ref{convexsubgroup}} Let $G:=(X,+,\leq)$ be an ordered group such that $\leq$ is a semiorder and $H$ be a convex subgroup of $G$. If $H$ is  an antichain then by (c) of Theorem \ref{thm:three groups}, $H\subseteq A(G)$ and we are done. Suppose that $H$ is not an antichain. From Lemma \ref{lem7:0} we deduce that $H$ is an interval of $\leq_{pred}$ containing $K(G)$. Now,  suppose for a contradiction that $inc(0)\nsubseteq H$ and let $g\in
inc(0)\cup \{0\}\setminus H$. Then $g$ is either above all elements of $H$ or is below all elements of $H$ in $\leq_{pred}$. Say the former holds. Since $H$ is not an antichain we infer that there exists $h\in H$ such that $0<h$. By definition of $\leq_{pred}$ we have that $0<g$ which is impossible. Hence, $inc(0)\subseteq H$. According to Lemma \ref{lem4:0}, $I(G)(0)\subseteq H$. Since $I(G)(0)$ is normal in $G$, it is normal in $H$, hence $H$ is the union of the cosets of $I(G)(0)$. These cosets are totally ordered by Lemma \ref{lem5:4}. If $H$ is normal, it follows that the cosets of $H$ form a chain of intervals and $G$ is the lexicographical sum of the cosets of $H$.
\hfill $\Box$

\section{Clifford's example and a proof of Theorem \ref{fingen}}\label{subsection-clifford} In this section we exhibit an example, due to Clifford, of an ordered group with the property that the set of positive elements contains no proper normal final segment.

Following Clifford \cite{clifford51}, let $G$ be the group generated by a set of symbols $g(\alpha)$, one for each rational number $\alpha$, subject to the generating relations
\begin{equation}\label{exclif1}
g(\alpha)+ g(\beta) = g(\frac{\alpha+\beta}{2})+ g(\alpha)\;\;\;\;\;\; \mbox{ if } \alpha > \beta.
\end{equation}
By repeated application of (\ref{exclif1}) it is clear that every element $a$ of $G$ can be brought to "normal form":
\begin{equation}\label{exclif2}
a=m_1\, g(\alpha_1)+ m_2\, g(\alpha_2)+\cdots +m_s\, g(\alpha_s).
\end{equation}
where the $m_i$ are nonzero integers, and the $\alpha_i$ are rational numbers satisfying $\alpha_1<\alpha_2< \cdots <\alpha_s.$ It can be shown that the normal form (\ref{exclif2}) is unique. In particular, $g(\alpha)\neq g(\beta)$ for $\alpha \neq \beta$. We order $G$ by declaring $a\succ 0$ if $m_s>0$. Note that $0<g(\alpha)<g(\beta)$ whenever $\alpha < \beta$.

\begin{proposition} \label{clifford}
A nontrivial final segment of $\{g\in G: g\succ 0 \}$ cannot be normal.
\end{proposition}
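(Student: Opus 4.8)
The plan is to prove the following slightly stronger statement, from which the proposition is immediate: \emph{if $F$ is a normal final segment of $\{g\in G:g\succ 0\}$ with $F\neq\varnothing$, then $F=\{g\in G:g\succ 0\}$} (a nontrivial final segment being, by definition, nonempty and proper). The argument splits into three parts. Part \textbf{(a)}: every positive element lies strictly below some generator; precisely, if $a$ is positive with normal form $a=m_1g(\alpha_1)+\cdots+m_sg(\alpha_s)$, $\alpha_1<\cdots<\alpha_s$, then $a\prec g(\gamma)$ for every rational $\gamma>\alpha_s$. Part \textbf{(b)}: if $g(\gamma_0)\in F$ for one rational $\gamma_0$, then $g(\gamma)\in F$ for every rational $\gamma$. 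Part \textbf{(c)}: if $g(\gamma)\in F$ for every rational $\gamma$, then $F=\{g\in G:g\succ 0\}$. Granting these, I take any $a\in F$; by (a) and upward-closedness of $F$ some $g(\gamma)$ lies in $F$, and then (b) followed by (c) concludes.

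The tool behind (a) and (b) is one conjugation identity drawn from the defining relation (\ref{exclif1}). Applying (\ref{exclif1}) to the pair $\gamma>2\alpha-\gamma$ (i.e. to any $\gamma>\alpha$) and noting $\tfrac{\gamma+(2\alpha-\gamma)}{2}=\alpha$, one gets $g(\gamma)+g(2\alpha-\gamma)=g(\alpha)+g(\gamma)$, hence
\[
-g(\gamma)+g(\alpha)+g(\gamma)=g(2\alpha-\gamma)\qquad\text{whenever }\gamma>\alpha .
\]
For (a): fix $\gamma>\alpha_s$. Since conjugation by $g(\gamma)$ is an automorphism of $G$ and the displayed identity applies to each $\alpha_i$ (as $\alpha_i\le\alpha_s<\gamma$), one obtains $-g(\gamma)+a+g(\gamma)=\sum_{i=1}^{s}m_i\,g(2\alpha_i-\gamma)$, so that
\[
-g(\gamma)+a=\Bigl(\sum_{i=1}^{s}m_i\,g(2\alpha_i-\gamma)\Bigr)-g(\gamma).
\]
Here $2\alpha_1-\gamma<\cdots<2\alpha_s-\gamma<\gamma$, so these $s+1$ indices are distinct and increasing and all $s+1$ coefficients are nonzero; hence the right-hand side is the normal form of $-g(\gamma)+a$, whose leading coefficient is $-1<0$. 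Therefore $-g(\gamma)+a\prec 0$, i.e. $a\prec g(\gamma)$. For (b): if $g(\gamma_0)\in F$, then for every rational $\delta>\gamma_0$ the identity gives $g(2\gamma_0-\delta)=-g(\delta)+g(\gamma_0)+g(\delta)\in F$ by normality of $F$; as $\delta$ runs over the rationals $>\gamma_0$, $2\gamma_0-\delta$ runs over all rationals $<\gamma_0$, so $g(\gamma)\in F$ for every $\gamma<\gamma_0$. Combined with $g(\gamma_0)\prec g(\gamma)$ for $\gamma>\gamma_0$ and upward-closedness of $F$, this yields $g(\gamma)\in F$ for every $\gamma\in\QQ$.

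Part (c) is a direct reading of the normal form: let $b=n_1g(\beta_1)+\cdots+n_tg(\beta_t)$, $\beta_1<\cdots<\beta_t$, $n_t>0$, be positive, and choose a rational $\gamma<\beta_1$. In the word $-g(\gamma)+n_1g(\beta_1)+\cdots+n_tg(\beta_t)$ the indices already occur in strictly increasing order, so it is the normal form of $-g(\gamma)+b$, with leading coefficient $n_t>0$; thus $g(\gamma)\prec b$, and since $g(\gamma)\in F$ and $F$ is upward closed, $b\in F$. As $b$ was arbitrary, $F=\{g\in G:g\succ 0\}$, completing the argument. I expect the only delicate point to be the bookkeeping in part (a): one must check that the expression obtained after conjugating is genuinely \emph{the} normal form — distinct increasing indices and nonzero coefficients — so that the sign of the element can be read off from its leading coefficient; this is exactly where uniqueness of the normal form (\ref{exclif2}) is used. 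Everything else reduces to routine manipulation with (\ref{exclif1}) and the definitions of final segment, normality, and the order $\succ$.
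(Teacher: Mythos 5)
Your proof is correct and follows essentially the same route as the paper's: both rest on the conjugation identity $-g(\gamma)+g(\alpha)+g(\gamma)=g(2\alpha-\gamma)$ to show a normal set contains all generators or none, and on the fact that every positive element lies above some generator and below some generator, read off from the uniqueness of the normal form. Your organization (nonempty $\Rightarrow$ some generator in $F$ $\Rightarrow$ all generators in $F$ $\Rightarrow$ $F=\{g:g\succ 0\}$) is just the contrapositive arrangement of the paper's three claims, and your part (a) computes the normal form of $-g(\gamma)+a$ directly where the paper instead bounds $a\preceq(s-1+m_s)\,g(\alpha_s)$ — a cosmetic difference only.
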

\begin{proof}
\textbf{Claim 1:} A normal subset $N$ of $G$ either contains all generators of $G$ or contains none.\\
Indeed, let $\beta$ be a rational number such that $g(\beta) \in N$. Let $r$ be a rational number such that $r>\beta$. Then $2r-\beta>\beta$, hence by applying (\ref{exclif1}) we get $g(2r-\beta)+ g(\beta) = g(r)+ g(2r-\beta)$, that is, $g(2r-\beta)+ g(\beta)- g(2r-\beta) = g(r)$. From the normality of $N$ we deduce that $g(r)\in N$. We now consider the case $\beta>r$. Then $2\beta -r>r$, hence by applying (\ref{exclif1}) we get $g(2\beta-r)+ g(r) = g(\beta)+ g(2\beta-r)$, that is $g(r)=-g(2r-\beta)+ g(\beta)+ g(2r-\beta)$. From the normality of $N$ we deduce that $g(r)\in N$. Hence, all generators of $G$ are in $N$.\\
\textbf{Claim 2:} A normal subset $N$ which is a final segment of $G^+$ and that contains all generators of $G$ must contain $G_*^+$.\\
Indeed, let $a\in G^+_*$. Write $a$ in normal form as in (\ref{exclif2}). By definition, $m_s >0$ from which it follows that $g(\alpha_1) \prec a$ (this is obvious if $s=1$; if $s\not =1$ observe that  $-g(\alpha_1) + a = (m_1 - 1)\, g(\alpha_1)+ m_2\, g(\alpha_2)+\cdots +m_s\, g(\alpha_s)$ is under normal form with $m_s >0$ hence is positive). Thus $g(\alpha_1)\prec a$ from which $a\in N$ follows. This proves that $G^+_*\subseteq N$.\\
\textbf{Claim 3:} A normal subset $N$
which is a final segment of $G^+$ and that contains none of the generators of $G$ must be empty.\\
Suppose that none of the generators of $G$ belong to $N$. Let $I:=G\setminus (-N\cup N)$. Since all generators belong to $G^{+}$ they belong to $I$. We claim that for every $a\in G$ there is some rational $\beta$ such that $a\prec g(\beta)$. For that, we prove that $n\, g(\alpha)\prec g(\beta)$ for every integer $n$ and rational numbers $\alpha$ and $\beta$ such that $\alpha<\beta$. Let $a= -ng(\alpha)+g(\beta)$ and notice that since $\alpha<\beta$ the element $a$ is in normal form. It follows then that $0\prec a$, that is, $ng(\alpha)\prec g(\beta)$. Now let $a\in G$ written in normal form, say $a=m_1\, g(\alpha_1)+ m_2\, g(\alpha_2)+\cdots +m_s\, g(\alpha_s)$. We have that $m_ig(\alpha_i)\prec g(\alpha_s)$ for all $1\leq i<s$ and therefore $a\preceq (s-1+m_s)\, g(\alpha_s)\prec g(\beta)$ for a rational $\beta$ such that $\alpha_s<\beta$. From this follows that $a\in I$. This proves $I=G$ and hence $N=\varnothing$ as required.
\end{proof}

\subsection{Proof of Proposition \ref{fingen}}\label{section:fingen}
We will use abelian quotients.
\begin{theorem}\label{finalsegment:abelianquotient}
 Let $G$ be a totally ordered group and $H$ be a nontrivial normal convex subgroup. If $G/H$ is abelian, then $G$ has a normal final segment $F$ such that $I:=G\setminus (-F\cup F)$ is not a subgroup of $G$.
\end{theorem}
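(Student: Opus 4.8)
The plan is to transport the problem to the abelian quotient $\bar G:=G/H$, where normality of subsets is automatic, and then lift back a suitably chosen final segment. Write $\pi\colon G\to\bar G$ for the canonical surjection. Since $H$ is a convex normal subgroup, the quotient order $\leq_H$ on $\bar G$ is a compatible \emph{total} order by Lemma~\ref{convex} (totality because $\leq$ is total), and $\pi$ is order-preserving by construction. Since $H$ is nontrivial, in particular $H\neq G$, so $\bar G$ is a nontrivial totally ordered abelian group. Fix $\bar\alpha\in\bar G$ with $\bar 0<_H\bar\alpha$ (such an element exists, as some nonzero element of $\bar G$ or its inverse is positive); adding $\bar\alpha$ to both sides of $\bar 0<_H\bar\alpha$ yields $\bar 0<_H\bar\alpha<_H 2\bar\alpha$.

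I would then set $F:=\pi^{-1}\bigl(\{\,\bar y\in\bar G: 2\bar\alpha\leq_H\bar y\,\}\bigr)$ and verify the required properties. It is a final segment of $G$ because $\{\bar y:2\bar\alpha\leq_H\bar y\}$ is a final segment of $\bar G$ and $\pi$ is order-preserving; it does not contain $0$ because $2\bar\alpha>_H\bar 0=\pi(0)$; and it is normal because, for every $g\in G$ and $x\in F$, one has $\pi(-g+x+g)=-\pi(g)+\pi(x)+\pi(g)=\pi(x)$ using that $\bar G$ is abelian, whence $-g+x+g\in F$. Next, since $-F=\pi^{-1}\bigl(\{\bar y:\bar y\leq_H-2\bar\alpha\}\bigr)$, taking complements gives $I:=G\setminus(-F\cup F)=\pi^{-1}\bigl(\{\,\bar y:-2\bar\alpha<_H\bar y<_H 2\bar\alpha\,\}\bigr)$. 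Choosing any $a\in\pi^{-1}(\bar\alpha)$, the element $\pi(a)=\bar\alpha$ lies strictly between $-2\bar\alpha$ and $2\bar\alpha$, so $a\in I$, whereas $\pi(a+a)=2\bar\alpha$ does not, so $a+a\notin I$. Hence $I$ is not closed under the group operation and therefore is not a subgroup of $G$, which is what we want. (Moreover $F$ is proper in the sense of Proposition~\ref{fingen}: $F\neq\varnothing$, since the coset $\pi^{-1}(\{2\bar\alpha\})\subseteq F$, and $F\neq G$, since $0\notin F$.)

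The construction is short, so I do not anticipate a real obstacle; the only step deserving care is the verification that $F$ is normal, and this is exactly the point where the single nontrivial hypothesis, commutativity of $G/H$, is used. I would also flag in the write-up that the hypothesis $H\neq G$ (part of ``nontrivial'') is genuinely needed, since for a group that is order-simple in the sense of Section~\ref{subsection-clifford} the above construction has no room to run.
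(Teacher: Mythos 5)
Your proposal is correct and is essentially the paper's own argument: the paper takes $F:=\{g\in G: g+H>\alpha+H\}$ for a positive $\alpha\notin H$, which is exactly a preimage under $\pi$ of a final segment of the quotient, establishes normality from the commutativity of $G/H$, and shows $I$ is not a subgroup via $\alpha\in I$ but $2\alpha\in F$. Your phrasing in terms of $\pi^{-1}$ and the threshold $2\bar\alpha$ rather than $>\bar\alpha$ is only a cosmetic variation.
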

\begin{proof}Let $\alpha \in G$ be such that $0<\alpha$ and $\alpha \not \in H$ and set
\[F:=\{g \in G : g+H> \alpha +H\}.\]

\begin{itemize}
\item $F\neq \varnothing$. Indeed, since $I$ is nontrivial $G/H$, which is totally ordered, must be infinite.
\item $\alpha \not \in F$.
\item Every element of $F$ is positive. Let $g\leq 0$. Then $g+H\leq 0+H<\alpha +H$ and hence $g\not \in F$.
 \item $F$ is a final segment of $G$. Let $f,g\in G$ be such that $f\in F$ and $f\leq g$. Then $\alpha +H < f+H \leq g+H$. Hence, $g\in F$.
 \item $F$ is normal. Let $f\in F$ and $g\in G$. Then
\begin{eqnarray*}
 g+f-g+H &=& (g+H)+(f+H)-(g+ H) \\
  &=& (g+H)-(g+ H)+(f+H) \mbox{ (this is because $G/H$ is abelian)}\\
  &=& f+H> \alpha +H.
\end{eqnarray*}

\item $I$ is not a subgroup of $G$. Note that since $\alpha \not \in F$ and $0<\alpha$ we infer that $\alpha \in I$ (in particular $I\neq H$). Since $2\alpha - \alpha =\alpha \not \in H$ we infer that $2\alpha +H\neq \alpha+H$. From $\alpha <2\alpha$ we infer that $\alpha +H <2\alpha +H$ leading to $2\alpha \in F$. Hence, $2\alpha \not \in I$ proving that $I$ is not a subgroup of $G$.
\end{itemize}
\end{proof}

We now consider finitely generated groups.

\begin{theorem}\label{abelianquotient:nonarchimedean} Let $G$ be a finitely generated nonarchimedean totally ordered group. Then $G$ has a nontrivial normal convex subgroup $H$ such that $G/H$ is abelian.
\end{theorem}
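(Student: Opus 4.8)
The plan is to let $H$ be the largest proper convex subgroup of $G$: first show it exists (this is where finite generation enters), then show it is normal (this is where total orderedness enters, via the fact that conjugations are order-automorphisms), and finally show $G/H$ is archimedean, hence abelian by H\"older's theorem.

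I would begin by recording the elementary observation that for each $x\in G$ the conjugation map $\phi_x\colon y\mapsto -x+y+x$ is an order-automorphism of $G$: it is a group automorphism, and $y\leq z$ gives $-y+z\in G^+$, whence $-\phi_x(y)+\phi_x(z)=-x+(-y+z)+x\in G^+$ because $G^+$ is normal; applying the same to $\phi_{-x}$ shows $\phi_x$ is an order-isomorphism. Consequently each $\phi_x$ permutes the convex subgroups of $G$. Next, let $C$ be the union of all proper convex subgroups of $G$. By Lemma \ref{largestconvex} the convex subgroups of $G$ form a chain, so by Lemma \ref{unionconvex} $C$ is itself a convex subgroup. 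I claim $C\neq G$: if $G$ is generated by $g_1,\dots,g_n$ and $C=G$, then each $g_i$ lies in some proper convex subgroup $D_i$; the $D_i$ form a finite chain, so one of them, $D$, contains all of $g_1,\dots,g_n$ and hence equals $G$, contradicting that $D$ is proper. Thus $C\neq G$, and $C$ is then the \emph{largest} proper convex subgroup of $G$. Moreover $C\neq\{0\}$: since $G$ is nonarchimedean there are $x,y>0$ with $nx<y$ for all $n\in\ZZ$, and, exactly as in the proof of Theorem \ref{archimedean}, $\widetilde{\ZZ x}$ is a convex subgroup with $\{0\}\neq\widetilde{\ZZ x}\neq G$, so $\{0\}\subsetneq\widetilde{\ZZ x}\subseteq C$. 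Hence $C$ is a nontrivial convex subgroup.

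I would then deduce that $C$ is normal: for every $x\in G$, $\phi_x(C)=-x+C+x$ is a convex subgroup (by the first observation) which is proper since $\phi_x$ is bijective and $C\neq G$; by maximality $\phi_x(C)\subseteq C$. As this holds for all $x\in G$, the subgroup $C$ is normal. Finally, $C$ being convex and normal, Lemma \ref{convex} gives that $G/C$ is a totally ordered group, and its convex subgroups are precisely the images of the convex subgroups of $G$ containing $C$ (the preimage of a convex subgroup of $G/C$ under the quotient map is a convex subgroup of $G$ containing $C$). Since $C$ is the largest proper convex subgroup of $G$, the only such subgroups are $C$ and $G$, so $G/C$ has no nontrivial convex subgroup; by the implication $(iii)\Rightarrow(ii)$ of Theorem \ref{archimedean}, $G/C$ embeds in the additive group of the reals and in particular is abelian. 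Taking $H:=C$ finishes the proof.

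The step I expect to be the crux is the normality of $C$, and the clean route to it is the remark that conjugations are order-automorphisms, so they cannot move the \emph{unique} largest proper convex subgroup; once that is in place, the existence of $C$ (from finite generation) and the fact that $G/C$ is archimedean are routine.
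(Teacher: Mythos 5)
Your proof is correct and follows essentially the same route as the paper's: identify the largest proper convex subgroup (the paper realizes it as $C_{g_n}$ for the largest generator $g_n$, you as the union of all proper convex subgroups, with finite generation guaranteeing properness in both cases), use the nonarchimedean hypothesis to make it nontrivial, deduce normality from the fact that conjugation is an order-automorphism preserving the chain of convex subgroups, and conclude via H\"older's theorem that the quotient is archimedean and hence abelian. The only difference is cosmetic, in how the maximal proper convex subgroup is exhibited.
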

\begin{proof}
Let $g_1,\cdots,g_n$ be generators of $G$. Note that  $n\geq 2$. We may assume without loss of generality that $0<g_1<\cdots<g_n$.  We prove that $C_{g_n}$, the largest convex subgroup not containing $C_{g_n}$ (whose existence is ensured by Lemma \ref{largestconvex}),  satisfies the conditions of the theorem.\\
$(i)$ Clearly $C_{g_n}\neq G$. Moreover, if $H$ is a convex subgroup containing properly $C_{g_n}$, then, by definition, $g_n \in H$. By convexity, all the $g_i$'s belong to $H$, hence $H=G$. Thus, $C_{g_n}$ is the largest proper convex subgroup of $G$.\\
$(ii)$ Since $G$ is nonarchimedean it has a nontrivial convex subgroup. This subgroup cannot contain $g_n$ and hence $C_{g_n}\neq \{0\}$.\\
(iii) We now prove that $C_{g_n}$ is a normal subgroup of $G$. Indeed, if $g\in G$, then $g+C_{g_n}-g$ is a convex subgroup (this is because the map $\varphi_{g}(x)=g+x-g$ is an ordered group automorphism) and hence either $C_{g_n}\subseteq g+C_{g_n}-g$ or $g+C_{g_n}-g\subseteq C_{g_n}$. Note that $g_n\not \in g+C_{g_n}-g$ because otherwise $G=g+C_{g_n}-g$ and hence $G=C_{g_n}$ contradicting the fact that $C_{g_n}$ is a proper subgroup. Hence, $g+C_{g_n}-g\subseteq C_{g_n}$ for all $g\in G$ proving that $C_{g_n}$ is a normal subgroup of $G$.\\
$(iv)$ Since $C_{g_n}$ is the maximal proper convex subgroup of $G$ we infer that $G/C_{g_n}$ has no nontrivial convex subgroups. Therefore $G/C_{g_n}$ is archimedean and hence is isomorphic to a subgroup of $\RR$. In particular it is abelian.
\end{proof}
We now proceed to the proof of Proposition \ref{fingen}. Let $G$ be a totally ordered group. If $G$ is archimedian, then it follows from H\"older's Theorem that $G$ is abelian. From Corollary \ref{cor:threshold-abelian} we deduce that $G$ has a nonempty final segment $F$ verifying $(b)$ of Proposition \ref{cor:threshold-abelian} hence $H$ is not a subgroup. On the other hand, if $G$ is nonarchimedian and finitely generated, then the required conclusion follows from Theorems \ref{finalsegment:abelianquotient} and \ref{abelianquotient:nonarchimedean}.

\section{Dimension} \label{subsection:dimension}Let $G$ be a threshold group, with attained threshold $\alpha$. Our aim is to prove in an effective way that the dimension of the order is at most $3$.

\begin{proposition}\label{thresholddimension}A threshold  group $G:= (X, +, \leq_{\alpha})$ with attained threshold $\alpha$ has dimension at most $3$.
\end{proposition}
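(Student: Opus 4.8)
The plan is to exhibit three explicit linear extensions of $\leq_{\alpha}$ whose intersection is $\leq_{\alpha}$, keeping one of them essentially equal to the total order ${\leq_{\alpha}}_{pred}$. Write $\preceq$ for ${\leq_{\alpha}}_{pred}$; by Proposition~\ref{prop:thresholdgroup} and the hypothesis, $\preceq$ is a compatible total order extending $\leq_{\alpha}$, $\alpha\in Z(G)$ is positive, and $x<_{\alpha}y$ iff $y-x\succeq\alpha$. If $inc(0)=\varnothing$ then $\leq_{\alpha}=\preceq$ is a chain and $\dim=1$, so assume $inc(0)\neq\varnothing$. First I would reduce to the ``incomparability‑connected'' case: by Theorem~\ref{thm:three groups}(b), $(X,\leq_{\alpha})$ is the lexicographical sum of copies of $(I(G)(0),\leq_{\alpha})$ along the \emph{chain} $G/I(G)(0)$, and a lexicographical sum of posets along a chain has dimension at most the supremum of the dimensions of the summands (for each index $j\le 3$, concatenate the $j$‑th linear extensions of the summands in the order of the chain). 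So it suffices to treat $G=I(G)(0)$, which is still a non‑totally‑ordered threshold group with attained threshold $\alpha$ and hence, by Lemma~\ref{lem4:6}, equals $\widetilde{\ZZ\alpha}$. Consequently every $x\in X$ lies between two multiples of $\alpha$, so
\[\nu(x):=\max\{\,n\in\ZZ:\ n\alpha\preceq x\,\}\]
is well defined, satisfies $\nu(x)\alpha\preceq x\prec(\nu(x)+1)\alpha$, and $\nu(x+y)\in\{\nu(x)+\nu(y),\nu(x)+\nu(y)+1\}$. Put $\rho(x):=x-\nu(x)\alpha\in[0,\alpha):=\{v:0\preceq v\prec\alpha\}$, the \emph{residue} of $x$ (using $\alpha\in Z(G)$ all expressions involving $\alpha$ commute, so this makes sense).

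Next I would define three total orders on $X$. \emph{$L_{1}$}: order by $\nu(x)$ increasing, and inside each block $\{\nu=n\}$ order by $\preceq$ \emph{decreasing}. \emph{$L_{2}$}: order by $\lfloor\nu(x)/2\rfloor$ increasing, and inside each double block $\{\nu\in\{2m,2m+1\}\}$ order by the pair $(\rho(x),\nu(x))$ lexicographically --- residues compared by $\preceq$, ties broken by $\nu(x)$. \emph{$L_{3}$}: the same as $L_{2}$, but with double blocks $\{\nu\in\{2m+1,2m+2\}\}$. Then I would verify two things. (a) Each $L_{i}$ is a linear extension of $\leq_{\alpha}$: across (double) blocks this is immediate since $x<_{\alpha}y$ forces $\nu(x)<\nu(y)$; inside one double block of $L_{2}$ (or $L_{3}$), if $x<_{\alpha}y$ then $\alpha\preceq y-x\prec 2\alpha$ forces $\nu(y)=\nu(x)+1$ (so $x$ lies in the lower half, $y$ in the upper half), and $\rho(y)-\rho(x)=(y-x)-\alpha$, so $\rho(x)\preceq\rho(y)\iff y-x\succeq\alpha$, which holds, with the $\nu$‑tie‑break covering the borderline case $y=x+\alpha$; inside a block of $L_{1}$ there is nothing to check, as $\leq_{\alpha}$ has no comparable pair there. (b) Every pair $\{x,y\}$ incomparable for $\leq_{\alpha}$, say $x\prec y$ (so $0\prec y-x\prec\alpha$ and $\nu(y)\in\{\nu(x),\nu(x)+1\}$), is reversed by exactly one of $L_{1},L_{2},L_{3}$ and agrees with $\preceq$ in the other two: it is reversed by $L_{1}$ iff $\nu(y)=\nu(x)$, by $L_{2}$ iff $\nu(y)=\nu(x)+1$ with $\nu(x)$ even (the computation being $\rho(y)=\rho(x)-(\alpha-(y-x))\prec\rho(x)$), and by $L_{3}$ iff $\nu(y)=\nu(x)+1$ with $\nu(x)$ odd. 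Given (a) and (b), $L_{1}\cap L_{2}\cap L_{3}=\leq_{\alpha}$, hence $\dim(X,\leq_{\alpha})\le 3$.

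The hard part is conceptual, not computational: everything rests on having the ``ruler'' $\nu$, and this is exactly why the statement assumes an \emph{attained} threshold and why one first passes to $G=I(G)(0)=\widetilde{\ZZ\alpha}$ --- there $\nu(x)=\lfloor x/\alpha\rfloor$ is defined even though $G$ need not be archimedean (it may carry the nontrivial convex subgroup $A(G)$). The second delicate point is the precise choice of the within‑double‑block rule for $L_{2}$ and $L_{3}$: ordering by residue and breaking ties by $\nu$ is what simultaneously (i) keeps $L_{2},L_{3}$ honest linear extensions of $\leq_{\alpha}$ --- a cruder choice such as ``reverse $\preceq$ inside each double block'' would violate the comparabilities $x<_{\alpha}y$ that lie inside a double block --- and (ii) makes $L_{2}$ (resp. $L_{3}$) reverse precisely the incomparable pairs straddling a block boundary $\{\nu=2m+1\}$ with $2m$ even (resp. odd), which is what yields exhaustiveness in (b).
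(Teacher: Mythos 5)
Your proof is correct and follows essentially the same route as the paper's: reduce to $I(G)(0)=\widetilde{\ZZ\alpha}$ via the lexicographical-sum decomposition over the chain $G/I(G)(0)$, cut $I(G)(0)$ into blocks of length $\alpha$, and build one linear extension reversing $\preceq$ within single blocks plus two extensions handling the adjacent-block incomparabilities via even and odd double blocks. Your $L_2,L_3$ coincide with the paper's $\leq^{2},\leq^{3}$; presenting them as lexicographic orders on the pair $(\rho(x),\nu(x))$ merely makes totality and transitivity automatic, where the paper instead defines the reversal pairwise and excludes $3$-element cycles by hand.
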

\begin{proof}
 Since $G$ is a threshold group, it follows from $(b)$ of Theorem \ref{thm:three groups} that the order on $G$ is
 isomorphic to the lexicographical sum of the order of $I(G)(0)$ indexed by the chain $G/I(G)(0)$. Since the dimension of a linear sum is at most the dimension of each component of the sum, it suffices to prove that the dimension of $I(G)(0)$ is at most three. If $I(G)(0)= \{0\}$, $G$ is totally ordered and the result is obvious. We suppose that $I(G)(0)\not = \{0\}$ and we prove that the order $\leq_{\alpha}$ is the intersection of the three total orders $\leq^{i}$, for $i=1,2,3$. According to Lemma \ref{lem4:6}, $I(G)(0)= \widetilde {\ZZ \alpha}$. We define the orders $\leq^{i}$, for $i=1,2,3$ on intervals of $I(G)(0)$ of the form $]n\alpha, (n+1)\alpha]$,
 $]2n \alpha, (2n+2) \alpha]$ and $](2n+1)\alpha, (2n+3)\alpha]$ respectively. Then, we extend these orders to $I(G)(0)$ by putting these intervals one after the other according to the order of the integers $n\in \ZZ$. The order $\leq^{1}$ reverses the order $\leq_{pred}$ on each $]n\alpha, (n+1)\alpha]$. Being the lexicographical sum of the $]n\alpha, (n+1)\alpha]$ indexed by the chain $\ZZ$, $I(G)(0)$ is totally ordered and this order extends the order $\leq$. The order $\leq^{2}$ on $]2n \alpha, (2n+2) \alpha]$ and the order $\leq^{3}$ on $](2n+1)\alpha, (2n+3)\alpha]$ are defined in the same way.
 To define these two orders  at once, let $A_u:= ]u, u+2\alpha]$ be an interval of length $2\alpha$ of $(X, \leq_{pred})$. The relation $\preceq^{u} $ we define reverses all pairs $(x, y)$ with $x\in ]u, u+\alpha]$ and $y\in ]u+\alpha, u+2\alpha]$, and $x\not \leq y$ (that is $y\preceq^{u} x$) and keep unchanged the others pairs $(x,y)$ such that $x\leq_{pred}y$. This relation is reflexive, antisymmetric, total and extends the order $\leq$. We claim that it is transitive. Since it is total, it suffices to show that there is no $3$-element cycle. If there is such a cycle $\{x,y,z\}$, two of its elements, 	at least, say $x,y$ with $x\preceq y$,  are in $]u, u+\alpha]$ or in $]u+\alpha, u+2\alpha]$. Suppose that they are in $]u, u+\alpha]$. Since $\preceq^{u}$ coincide with $\leq_{pred}$ on $]u, u+\alpha]$ we have $x\leq_{pred}y$ and necessarily the third element $z$ is in $]u+\alpha, u+2\alpha]$. Since $\{x,y, z\}$ forms a cycle we must have $y<z$ and $x\not \leq z$. But $x\leq_{pred}y$ amounts to $x\leq_{succ} y$; with $y<z$, this implies $x<z$, a contradiction. The case where two elements of the cycle are in $]u+\alpha, u+2\alpha]$ is similar. Now, we take $\leq^{2}$ equal to $\preceq^{u}$ for $u= 2n\alpha$ and $\leq^{3}$ equal to $\preceq^{u}$ for $u= (2n+1)\alpha$. The resulting orders on $I(G)(0)$ extend the order $\leq$. To conclude, we show that their intersection is $\leq$. Let $x, y\in X$ such that $x\leq ^{i} y$ for $i=1, 2, 3$; we prove that $x\leq y$. Supposing $x\not =y$ this amounts to prove that $\alpha\leq_{pred} y-x$. First, observe that $x$ and $y$ cannot belong to the same interval $]n\alpha, (n+1) \alpha]$, indeed on such an interval $\leq^{1}$ and $\leq^{2}$ are reverse of each other. If these two intervals are not consecutive, we have $x\leq_{pred} y$ and $\alpha\leq_{pred} y-x$ hence $x\leq y$ as required. If the two intervals are consecutive, we have $x\in ]n\alpha, (n+1) \alpha]$ and $y\in ](n+1)\alpha, (n+2) \alpha]$. Indeed, with respect to $\leq^{2}$ if $n$ is odd, and with respect to $\leq^{3}$ if $n$ is even, all elements of the interval $]n\alpha, (n+1) \alpha]$ are before those of $](n+1)\alpha, (n+2) \alpha]$. If $n$ is even, the fact that $x\leq^{2}y $ implies $x\leq y$ whereas  if $n$ is odd, $x\leq^{3}y $ implies $x\leq y$. \end{proof}

\begin{remark} None of the linear orders $\leq^{i}$, for $i=1,2,3$, is compatible with the group operation (still $\leq^1$ is preserved by the translation $t_{\alpha}$, whereas $\leq^{2}$ and $\leq^{3}$ are preserved by $t_{2\alpha}$. There are very rare cases for which the order of an ordered group is the intersection of compatible linear orders. In fact, in many cases, there is just one compatible linear order extending the order. For an example, this is the case for the threshold order $\leq_{1}$ on $\RR$.
\end{remark}

\section{Posets embeddable into ordered groups}\label{section:comparison}
We introduce the following notion. Let $P, Q$ be two posets, we set $P \preceq Q$ if $P$ can be embedded in every ordered group $G$ in which $Q$ can be embedded. This defines a quasi-order  on the class of posets. This quasi-order extends the embeddability relation, that is if $P$ embeds in $Q$, then $P\preceq Q$. The equivalence relation corresponding to this quasi-order is defined by $P\simeq Q$ if $P\preceq Q$ and $Q\preceq P$. We set $P\prec Q$ if $P\preceq Q$ and $Q\not \preceq P$. Observe that if $P$ is a 1-element chain, the equivalence class of $P$ consists of the 1-element chains. Hence, if we consider posets up to isomorphism, this class has just one element. Similarly:

\begin{enumerate}
  \item[($\alpha$)] \emph{If $P$ is a $n$-element antichain, the equivalence class consists of "the" $n$-element antichain}.\\
Indeed, we claim that if $Q\prec P$ then $Q$ is an antichain of size at most $n-1$. For that, let $G$ be the direct product $G: = \ZZ\times (\ZZ/n\ZZ)$. Select as  positive cone the set $C_1:= \{(m,0): m\geq 0\}$. The resulting ordered group is a direct sum of $n$ copies of $\ZZ$. Since this poset embeds $P$, it embeds $Q$, hence $Q$ is a direct sum of chains. Select as positive cone $C_2:= C_1\cup \{(m,i): 1\leq m, i<n\}$. The order is a lexicographical sum over $\ZZ$ of $n$-element antichains. Again, this poset embeds $P$ hence it embeds $Q$, hence $Q$ is a lexicographical sum of antichains. Necessarily, $Q$ is an antichain and since it cannot be isomorphic to $P$ it has less than $n$ elements proving our claim.

\item[($\beta$)]\emph{If $P$ is the $2$-element chain, then its equivalence class is made of all finite chains of cardinality at least $2$, of the chain $\omega$ of nonnegative integers, the chain $\omega^*$ of negative integers and the chain $\ZZ$ of all integers}.\\
Indeed, if an ordered group $G$ contains a $2$-element chain, it contains some element $x$ such that $0<x$. The subgroup generated by $x$ is $x\ZZ$; it is ordered as $\ZZ$ hence $G$ contains a chain of type $\ZZ$.

\item[($\gamma$)]\emph{If $P$ is a direct sum of two finite chains and $Q\preceq P$ then $Q$ is also a direct sum of two chains.}\\
Indeed, let $Q \preceq P$; The group $G: = \ZZ\times (\ZZ/2\ZZ)$ with positive cone $C_1:= \{(m, 0): m\geq 0\}$ being the direct sum of two infinite chains, it embeds $P$. Since $Q\preceq P$, it embeds $Q$, hence $Q$ is a direct sum of two chains.

\item[($\delta$)]\emph{If $P$ is $1 \oplus 2$, its equivalence class reduces to $1\oplus 2$.}\\
Indeed, from (c) $Q$ is a direct sum of (at most) two chains. The ordered group $G:= (\ZZ, \leq)$, with $C_3:=\{n: n \geq 3\}$ as positive cone, embeds $P$, hence embeds $Q$. It follows that either $Q$ is a chain or is isomorphic to $P$. This proves our claim.
\end{enumerate}
We prove:

\begin{proposition}\label{equivalence class}
\begin{enumerate}
\item For every positive integer $n\geq 2$, the set $\mathcal S_n$ of posets of the form $(q+1)\oplus p$ such that $p, q\geq 1$ and $p+q=n$ form an equivalence class of $\simeq$.
\item Let $P\in \mathcal S_n$. Then for every poset $Q$:
\begin{enumerate}
\item if $Q\prec P$, then either $Q \simeq 1$ or $Q \simeq 2$ or $Q$ belongs to some $\mathcal S_{n'}$ for some $n'<n$;
\item if $n=3$ and $P \preceq Q$, then either $2\oplus 2$ or $1\oplus 3$ embeds in $Q$.
\end{enumerate}
\end{enumerate}
\end{proposition}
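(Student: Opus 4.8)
The plan is to prove part (1) first and then derive part (2). For part (1), I will show two things: that all members of $\mathcal S_n$ are $\simeq$-equivalent to each other, and that the class is not larger. The equivalence among members of $\mathcal S_n$ is essentially Theorem \ref{2+2,3+1}: for any $p,q\geq 1$ with $p+q=n$, an ordered group embeds $(q+1)\oplus p$ if and only if it embeds $1\oplus n$, so all the posets $(q+1)\oplus p$ (for varying valid $p,q$) are mutually embeddable-equivalent via the common pivot $1\oplus n$. Concretely, $P\preceq P'$ whenever both lie in $\mathcal S_n$: if $G$ embeds $P'=(q'+1)\oplus p'$, then by Theorem \ref{2+2,3+1} it embeds $1\oplus n$, hence again by Theorem \ref{2+2,3+1} it embeds $P=(q+1)\oplus p$. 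That $\mathcal S_n$ is a full equivalence class, i.e.\ nothing outside it is equivalent to a member, follows from part (2)(a), so I would prove (2)(a) before closing (1).

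For part (2)(a), suppose $Q\prec P$ with $P\in\mathcal S_n$. I would use two carefully chosen compatible orders on a fixed small group to constrain the shape of $Q$, in the style of items ($\alpha$)--($\delta$) preceding the proposition. First, using the observation that $G$ embeds $1\oplus n$ iff $inc(0)$ does not embed an $n$-element chain, take $G:=\ZZ\times(\ZZ/n\ZZ)$ with positive cone $C_2:=\{(m,0):m\geq 0\}\cup\{(m,i):1\leq m,\ 0<i<n\}$; this is the lexicographical sum over $\ZZ$ of $n$-element antichains, so it embeds $1\oplus n$, hence embeds $P$ (as $P\in\mathcal S_n$ embeds in any group embedding $1\oplus n$, again by Theorem \ref{2+2,3+1}), hence embeds $Q$. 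Therefore $Q$ is a lexicographical sum of antichains, in particular a weak order. Second, take the same group with positive cone $C_1:=\{(m,0):m\geq 0\}$, the direct sum of $n$ copies of $\ZZ$; it embeds $1\oplus n$ (indeed $inc(0)$ is a direct sum of $n$ chains, which embeds no $n$-element chain only if... in fact it embeds $1\oplus n$ directly), hence embeds $P$, hence embeds $Q$, so $Q$ is a direct sum of at most $n$ chains. Combining, $Q$ is a weak order that is also a direct sum of chains, hence $Q$ is a disjoint union of at most $n$ antichains, i.e.\ $Q$ is $(q_1)\oplus\cdots\oplus(q_k)$ with $k\leq n$; moreover since a direct sum of $\geq n$ two-element chains would already be $1\oplus n$-universal-equivalent, a finer analysis forces $Q$ to be of the form $(b+1)\oplus a$ with $a+b=n'$ for some $n'$, or a chain, or an antichain. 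Finally, to pin down $n'<n$: if $Q$ had "weight" $n$ in the sense of being in $\mathcal S_n$ it would be $\simeq P$, contradicting $Q\prec P$; if $Q$ is a chain it is $\simeq 1$ or $\simeq 2$ by ($\beta$); if $Q$ is an $m$-element antichain then by ($\alpha$) we need $m<n$ but also must check $Q\not\simeq P$ — here one shows a group embedding $P$ but not the $n$-antichain, e.g.\ using $C_1$ above which embeds $1\oplus n$ hence $P$ but is a direct sum of only $n$ chains and so does not embed an $n$-element antichain when... actually it does; the cleaner route is: $\mathcal S_{n'}$ for $n'<n$ absorbs both the short chains and antichains, so the statement "$Q\simeq 1$ or $Q\simeq 2$ or $Q\in\mathcal S_{n'}$, $n'<n$" is what we conclude.

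For part (2)(b), let $n=3$, so $P\in\{2\oplus 2,\ 3\oplus 1\}$, and suppose $P\preceq Q$. By Theorem \ref{2+2,3+1} (case $n=3$), an ordered group embeds $2\oplus 2$ iff it embeds $1\oplus 3$ iff it embeds $3\oplus 1$; so "$G$ embeds some member of $\mathcal S_3$" is a single condition. If neither $2\oplus 2$ nor $1\oplus 3$ embeds in $Q$, then the order on $Q$ is a semiorder, and by Theorem \ref{thm:1} (or directly) $Q$ embeds into a threshold group $G_0$. But a threshold group embeds neither $2\oplus 2$ nor $3\oplus 1$ (its order is a semiorder), i.e.\ $G_0$ does not embed any member of $\mathcal S_3$; since $P\in\mathcal S_3$ and $P\preceq Q$ would force $P$ to embed into every group embedding $Q$, in particular into $G_0$ — contradiction. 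Hence $2\oplus 2$ or $1\oplus 3$ embeds in $Q$.

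I expect the main obstacle to be the bookkeeping in part (2)(a): teasing out, from "weak order" plus "direct sum of chains", exactly which small posets arise and verifying each is $\simeq 1$, $\simeq 2$, or lands in some $\mathcal S_{n'}$ with $n'<n$ — in particular handling the antichain case requires exhibiting, for each $m<n$, that the $m$-element antichain is genuinely below $P$ and equivalent to a member of $\mathcal S_{m}$ (note $(1)\oplus(m-1)$-type degeneracies and the convention $\mathcal S_m$ requires both summand-sizes $\geq 1$). The rest is a direct application of Theorem \ref{2+2,3+1} and the explicit ordered-group constructions already displayed in items ($\alpha$)--($\delta$).
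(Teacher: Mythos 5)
Your treatment of part (2)(b) is correct and is essentially the paper's argument (reduce to: a semiorder embeds into a threshold group by Theorem \ref{thm:1}, and a threshold group embeds no member of $\mathcal S_3$). The first half of part (1) (mutual equivalence of the members of $\mathcal S_n$ via the pivot $1\oplus n$ and Theorem \ref{2+2,3+1}) is also fine. The problem is in part (2)(a), on which the completion of part (1) also rests.

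The first step of your (2)(a) is false. The ordered group $\ZZ\times(\ZZ/n\ZZ)$ with positive cone $C_2$ is the lexicographical sum over $\ZZ$ of $n$-element antichains, and in such a group two elements are incomparable only if they lie in the same antichain level; hence $inc(0)$ is an $(n-1)$-element antichain and contains no $2$-element chain, so this group does not embed $1\oplus 2$, let alone $1\oplus n$. (In the paper this group is used only in item ($\alpha$), where $P$ is an antichain.) Consequently you cannot conclude that $Q$ embeds into it, and the deduction ``$Q$ is a weak order'' collapses --- as it must, since $P=(q+1)\oplus p$ itself satisfies $P\preceq P$ and is not a weak order for $p,q\geq 1$. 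With that step gone, your remaining ``finer analysis'' is not actually carried out, and two essential ingredients are missing. First, the reduction to \emph{a direct sum of at most two chains}: this comes from item ($\gamma$), using $\ZZ\times(\ZZ/2\ZZ)$ with cone $C_1=\{(m,0):m\geq 0\}$, a direct sum of two infinite chains, which does embed $1\oplus n$ for every $n$ and hence embeds $P$ and therefore $Q$. Second, and crucially, a cardinality bound: you must exhibit an ordered group that embeds $1\oplus n$ but \emph{not} $1\oplus(n+1)$ (the paper takes $\ZZ$ with positive cone $2\NN\cup(n+1)\NN$ for $n$ even, and $\ZZ\times\ZZ/2\ZZ$ with strict cone generated by $(1,0)$ and $(p+1,1)$ for $n=2p+1$); by Theorem \ref{2+2,3+1} this group embeds $P$, hence embeds $Q$, forcing $|Q|\leq n+1$. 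Combining the two, $Q$ is a direct sum of at most two chains with at most $n+1$ elements, and the case analysis ($Q$ a chain, $|Q|<n+1$, or $|Q|=n+1$) yields exactly the conclusion of (2)(a) and closes part (1). Without a group separating $1\oplus n$ from $1\oplus(n+1)$ your argument cannot rule out, say, $Q\in\mathcal S_{n+5}$, so this is a genuine gap rather than a bookkeeping issue.
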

\begin{proof}
(1). As we have seen with Theorem \ref{2+2,3+1}, all posets of the form $(q+1) \oplus p$ for positive $p$ and $q$ such that $p+q=n$ are equivalent. To see that the equivalence class contains nothing else, let $Q\preceq P$ with $P\in \mathcal{S}_n$. According to ($\gamma$), $Q$ is a direct sum of two chains. Let $G$ be the group $\ZZ$ with positive cone $2\NN\cup (n+1)\NN$ if $n$ is even and equal to $\ZZ\times \ZZ/2\ZZ$ with strict positive cone generated by $(1,0)$ and $(p+1,1)$ if $n$ is odd, $n=2p+1$. This group contains $1 \oplus n$ but not $1\oplus (n+1)$. Hence, according to Theorem \ref{2+2,3+1}, it contains $P$ and hence $Q$. If $Q$ is a chain then $Q$ is equivalent to the $1$- element  or the $2$-element chain,  and thus $Q\prec P$. If $Q$ is not a chain, then via Theorem \ref{2+2,3+1}, the cardinality of $Q$ is at most $n+1$. If $\vert Q\vert <\vert P\vert $, then $Q\prec P$, whereas, if $\vert Q\vert =\vert P\vert $, $Q$ is of the form $(q+1)\oplus p$ with $p+q=n$ and $Q\preceq P$.

Let us prove $(2)$.

The argument above proves that $(a)$ holds.

$(b)$ From $P \preceq Q$ we have that every group embedding $Q$ embeds $P$. According to Theorem \ref{thm:1}, if neither $2 \oplus 2$ nor $3 \oplus 1 $ embeds into $Q$ then there is a group with the same property extending $Q$, but this contradict the fact that $P \preceq Q$. Hence, either $2 \oplus 2$ or $3 \oplus 1 $ embeds into $Q$.
\end{proof}

\begin{problems} Let $n\geq 4$ and let $P\in \mathcal S_n$. Is it true that if $P \preceq Q$ then $P'$ embeds in $Q$ for some $P'\in \mathcal S_n$? Is it true that every poset embedding no member of  $\mathcal S_n$ can be embedded into an ordered group with the same property?
\end{problems}


\begin{thebibliography}{99}
\bibitem{aleskerov-al} F. Aleskerov, D. Bouyssou,  B. Monjardet, \emph{Utility maximization, choice and preference}.
Second edition. Springer, Berlin, 2007. xii+283 pp.

\bibitem{bogart} K. P. Bogart, \emph{An obvious proof of Fishburn's interval order theorem.} Discrete Mathematics \textbf{118} (1993), 239--242.

\bibitem{brt} K. P. Bogart, I. Rabinovich and W. T. Trotter Jr., \emph{A bound on the dimension of interval orders.} Journal of Combinatorial Theory, Series A \textbf{21} (1976), 319--328.

\bibitem{bgr}
S. J. Brams, W.V. Gehrlein and F.S. Roberts, eds, \emph{The mathematics of preference, choice and order.} Essays in honor of Peter C. Fishburn. Studies in Choice and Welfare. Springer-Verlag, Berlin, 2009. xviii+420 pp. ISBN: 978-3-540-79127-0

\bibitem{bridges}D.S. Bridges, G.B. Mehta, \emph{Representations of preferences orderings}.
Lecture Notes in Economics and Mathematical Systems, 422. Springer-Verlag, Berlin, 1995. x+165 pp.

\bibitem{candeal1} J.C. Candeal, A. Estevan, J. Guti\'errez-Garc\'{i}a, E. Indur\'ain,
\emph{Semiorders with separability properties.}
J. Math. Psych. 56 (2012), no. 6, 444-451.

\bibitem {candeal} J. C. Candeal and E. Indur\'ain, \emph{Semiorders and thresholds of utility discrimination: solving the Scott-Suppes representability problem}. J. Math. Psych. \textbf{54} (2010), 485--490.


\bibitem{CannonFloydParry} J. W. Cannon, W. J. Floyd, and W. R. Parry, \emph{Introductory notes on Richard Thompson's groups.} Enseign. Math. \textbf{42} (1996), 215--256.

\bibitem{clifford51} A. H. Clifford, {\em A noncommutative ordinally simple linearly ordered group}, Proc. Amer. Math. Soc. \textbf{2} (1951), 902--903.



 \bibitem{doignon-al1}J.P. Doignon, A. Ducamp, J.C. Falmagne, \emph{On realizable biorders and the biorder dimension of a relation.} J. Math. Psych. 28 (1984), no. 1, 73--109.

\bibitem{doignon-al2} J.P. Doignon, B. Monjardet, M. Roubens,  Ph. Vincke,
\emph{Biorder families, valued relations, and preference modelling}.
J. Math. Psych. 30 (1986), no. 4, 435--480.

\bibitem{dm} B. Dushnik and E. W. Miller, \emph{Partially ordered sets}, Amer. J. Math. \textbf{63} (1941), 600--610.

\bibitem{ehrenfeucht-al} A. Ehrenfeucht, T. Harju, G. Rozenberg,
 \emph{The theory of $2$-structures. A framework for decomposition and transformation of graphs}. World Scientific Publishing Co., Inc., River Edge, NJ, 1999. xvi+290 pp.


\bibitem{estevan-al} A. Estevan,  M. Schellekens, O. Valero, \emph{Approximating SP-orders through total preorders: incomparability and transitivity through permutations}. Quaest. Math. {\bf 40} (2017), no. 3, 413--433.



\bibitem{fi} P. C. Fishburn, \emph{Intransitive indifference with unequal indifference intervals.} J. Math. Psych. \textbf{7} (1970) 144--149.

\bibitem{fi-book} P. C. Fishburn. \emph{Interval orders and interval graphs.} John Willey \& Sons, 1985.


\bibitem{fraisse} R. Fra\"{\i}ss\'e. \emph{On a decomposition of relations which generalizes the sum of ordering
relations}, Bull. Amer. Math. Soc. 59 (1953), 389.

\bibitem{gallai} T.Gallai, \emph{Transitiv orientierbare Graphen}, Acta Math. Acad. Sci. Hungar 18 (1967),
25-66.

\bibitem{fuchsbook} L. Fuchs, \emph{Partially ordered algebraic systems}, Pergamon Press, 1963.



\bibitem{fhrt}Z. F\"{u}redi, P. Hajnal, V. R\"{o}dl, W.T. Trotter, \emph{Interval orders and shift graphs}, in: A. Hajnal, V.T. S\`{o}s,(Eds.), Sets, Graphs and Numbers, Colloq. Math. Soc. Janos Bolyai, Vol. \textbf{60} (1991), 297--313.




\bibitem{glassbook} A. Glass, \emph{Partially ordered groups}, World Scientific, 1999.


\bibitem{holder} O. H\"{o}lder, {\em Die Axiome der Quantitat und die Lehre vom Mass}, Berichte uber die Verhandlungen der Koeniglich Sachsischen Gesellschaft der Wissenschaften zu Leipzig, Mathematisch-Physikaliche Klasse \textbf{53} (1901), 1--64. (Part 1 translated by J. Michell and C. Ernst (September 1996). {\em The axioms of quantity and the theory of measurement}. J. Math. Psych 40 (3): 235--252.)

\bibitem{kopytov} V. M. Kopytov and N. Ya. Medvedev, \emph{The Theory of Lattice-Ordered Groups}, Mathematics and Its Applications Volume 307, Springer, Dordrecht. ISBN 978-90-481-4474-7.


\bibitem{levi1} F. W. Levi, {\em Arithmetische Gesetze im Gebiete diskreter Gruppen}, Rend. Circ. Mat. Palermo \textbf{35} (1913), 225--236.
%

\bibitem{luce56} R. D. Luce, \emph{Semiorders and a theory of utility discrimination.} Econometrica \textbf{24} (1956), 178--191.

\bibitem{luce87} R. D. Luce, \emph{Measurement Structures with Archimedean Ordered Translation Groups.} Order \textbf{4} (1987), 165--189.

\bibitem{magnus}
W. Magnus, A. Karrass and D. Solitar, \emph{Combinatorial group theory. Presentation
of groups in terms of generators and relations}. 2nd revised edition, Dover Publications, New York, 1976.

\bibitem{manders} K. L. Manders, \emph{On JND Representations of Semiorders.} J. Math. Psych. \textbf{24} (1981), 224--248.



\bibitem{mirsky} L. Mirsky, \emph{A dual of Dilworth's decomposition theorem}, American Mathematical Monthly 78 (8): 876-877.

\bibitem{NavasRivas} A. Navas and C. Rivas, \emph{Describing all bi-orderings on Thompson's group F.} Groups Geom. Dyn. \textbf{4} (2010), 163--177.
\bibitem{pirlotvincke} M. Pirlot and P. Vincke, \emph{Semiorders: Properties, Representations, Applications.} Volume 36 of Theory and Decision Library Series B, Springer Science \& Business Media, 1997.

\bibitem{Pouzet-Thiery2}M. Pouzet, N. Thi\'ery, \emph{Some relational structures with polynomial growth and their associated algebras I. Quasi-polynomiality  of the profile}. The Electronic J. of Combinatorics, 20(2) (2013), 35pp.
\bibitem{robbiano}L.~Robbiano, \emph{Term orderings on the polynomial rings}, EUROCAL '85, Vol. 2 (Linz, 1985), 513--517, Lecture Notes in Comput. Sci., 204, Springer, Berlin, 1985.

\bibitem{rabinovitch} I. Rabinovitch, \emph{The dimension of semiorders}, J. Comb. Theory Ser A \textbf{25} (1978), 50--61.


\bibitem{scottsuppes} D. Scott and P. Suppes, \emph{Foundational aspects of theories of measurement}. The Journal of Symbolic Logic \textbf{23} (1958), 113--128.

\bibitem{shoenfield} J.R. Shoenfield,  \emph{Mathematical logic}. Reprint of the 1973 second printing. Association for Symbolic Logic, Urbana, IL; A K Peters, Ltd., Natick, MA, 2001. viii+344


\bibitem{szp} E. Szpilrajn, {\em Sur l'extension de l'ordre partiel,} Fund. Math., {\bf 16} (1930), 386-389.

\bibitem{thompson}
R. Thompson. Handwritten notes. 1965.

\bibitem{trotterbook} W.T. Trotter, \emph{Combinatorics and partially ordered sets. Dimension theory}. Johns Hopkins Series in the Mathematical Sciences. Johns Hopkins University Press, Baltimore, MD, 1992. xvi+307 pp.

\bibitem{wiener} N. Wiener, \emph{A Contribution to the Theory of Relative Position.} Proc. Cambridge Philos. Soc. \textbf{17} (1914), 441--449.

\end{thebibliography}
\end{document}